\documentclass[12pt,reqno,tbtags]{amsart}

\usepackage{amsthm}
\usepackage{amssymb}
\usepackage{amsfonts, mathrsfs}
\usepackage{amsmath}
\usepackage[numbers, sort&compress]{natbib}
\usepackage{graphicx}
\usepackage{vmargin, enumerate}
\usepackage{setspace}
\usepackage{paralist}
\usepackage[pdftex]{hyperref}
\usepackage[normalem]{ulem}

%
\newcommand{\R}{\mathbb{R}}

\newcommand{\N}{{\mathbb N}}

\newcommand{\E}[1]{{\mathbf E}\left[#1\right]}

\newcommand{\p}[1]{{\mathbf P}\left\{#1\right\}}

\newcommand{\I}[1]{{\mathbf 1}_{[#1]}}

\newcommand{\ip}[2]{\left\langle #1 , #2 \right\rangle}
\newcommand{\ipo}[1]{\left\langle #1 , #1 \right\rangle}

 \newcommand{\bag}{\begin{align}}
\newcommand{\bags}{\begin{align*}}
\newcommand{\eag}{\end{align*}}
\newcommand{\eags}{\end{align*}}

\newtheorem{thm}{Theorem}
\newtheorem{lem}[thm]{Lemma}
\newtheorem{prop}[thm]{Proposition}
\newtheorem{cor}[thm]{Corollary}

\newtheorem{fact}[thm]{Fact}


\newcommand\cE{\mathcal E}

\newcommand\by{{\bf y}}

\newcommand{\refT}[1]{Theorem~\ref{#1}}
\newcommand{\refC}[1]{Corollary~\ref{#1}}
\newcommand{\refL}[1]{Lemma~\ref{#1}}

\newcommand{\refS}[1]{Section~\ref{#1}}
\newcommand{\refP}[1]{Proposition~\ref{#1}}

\newcommand{\refF}[1]{Fact~\ref{#1}}
\newcommand{\refQ}[1]{(\ref{#1})}

\newcommand{\refeq}[1]{(\ref{#1})}


\newcommand{\pran}[1]{\left(#1\right)}

\newcommand{\eps}{\epsilon}

\hypersetup{
    bookmarks=true,         
    unicode=false,          
    pdftoolbar=true,        
    pdfmenubar=true,        
    pdffitwindow=true,      
    pdftitle={My title},    
    pdfauthor={Author},     
    pdfsubject={Subject},   
    pdfnewwindow=true,      
    pdfkeywords={keywords}, 
    colorlinks=true,       
    linkcolor=blue,          
    citecolor=blue,        
    filecolor=blue,      
    urlcolor=blue           
}




\newcommand\urladdrx[1]{{\urladdr{\def~{{\tiny$\sim$}}#1}}}

\begingroup
  \count255=\time
  \divide\count255 by 60
  \count1=\count255
  \multiply\count255 by -60
  \advance\count255 by \time
  \ifnum \count255 < 10 \xdef\oclock{\the\count1:0\the\count255}
  \else\xdef\oclock{\the\count1:\the\count255}\fi
\endgroup


\newcommand{\bM}{\mathbf{M}}
\newcommand{\dog}{D^{>0}}
\newcommand{\simg}{\sim_{\Gamma}}
\newcommand{\simh}{\sim_{H}}
\newtheorem{question}[thm]{Question}
\newcommand{\LD}{\mathrm{LD}}
\newcommand{\SD}{\mathrm{SD}}
\newcommand{\pat}{({\bf a},{\bf e})}
\parskip = 0.1in

\begin{document}

\title[The spectrum of random lifts]{The spectrum of random lifts}
\author{L. Addario-Berry \and S. Griffiths}
\address{L.A-B. Department of Mathematics and Statistics, McGill University, 805 Sherbrooke Street West, 
		Montr\'eal, Qu\'ebec, H3A 2K6, Canada}
\email{louigi@math.mcgill.ca}
\date{December 17, 2010} 
\urladdrx{http://www.math.mcgill.ca/~louigi/}

\address{S.G. IMPA, Est.~Dona Castorina 110, Jardim Bot\^anico, Rio de Janeiro, Brazil}		
\email{sgriff@impa.br}

\subjclass[2000]{05C50, 05C80, 60C05} 

\begin{abstract} 
For a fixed $d$-regular graph $H$, a {\em random $n$-lift} is obtained 
by replacing each vertex $v$ of $H$ by a ``fibre'' containing $n$ vertices, 
then placing a uniformly random matching between fibres corresponding to 
adjacent vertices of $H$. We show that with extremely high probability, 
all eigenvalues of the lift that are not eigenvalues of $H$, have order $O(\sqrt{d})$. 
In particular, if $H$ is Ramanujan then its $n$-lift is with high probability nearly 
Ramanujan. We also show that any exceptionally large eigenvalues of the $n$-lift 
that are not eigenvalues of $H$, are overwhelmingly likely to have been caused 
by a dense subgraph of size $O(|E(H)|)$.
\end{abstract}

\maketitle


\section{Introduction}\label{sec:intro} 
Expander graphs, graphs in which all small subsets exhibit good expansion properties, are intriguing objects of study that arise in such diverse fields as number theory, computer science and discrete geometry.  As Hoory, Linial and Wigderson remark in their wide ranging survey on expanders \cite{HLW}, one reason for their ubiquity is that they may be defined in at least three languages: combinatorial/geometric, probabilistic and algebraic.  We refer the reader to this survey and to the expository article of Sarnak \cite{S} for more background on expander graphs and their applications.

We shall be concerned, almost exclusively, with {\em $d$-regular} graphs, in which every vertex has exactly $d$ neighbours.  
From the algebraic viewpoint, which we take throughout, a $d$-regular graph $G$ is an expander if there is a significant gap between $\lambda(G)$, the largest eigenvalue of the adjacency matrix of $G$ (for a $d$-regular graph this value is always $d$) and $\lambda_2(G)$, the largest modulus of any other eigenvalue.  
Classic results of Dodziuk, Alon--Milman and Alon \cite{D84,AM85,A} show that the difference $d-\lambda_2(G)$ controls the combinatorial expansion of $G$. More precisely, writing $h(G) = \min_{S} |E(S,S^c)|/|S|$, where $E(S,S^c)$ is the number of edges from $S$ to $S^c$, its complement in $V$, and where the minimum is taken over subsets $S$ of the vertices of $G$ with $|S| \leq |S^c|$, we have 
\[
\frac{d-\lambda_2(G)}{2} \leq h(G) \leq \sqrt{2d(d-\lambda_2(G))}.
\]
Given the theorem of Alon and Boppana \cite{A,N} that $\lambda_2(G) \ge 2\sqrt{d-1}-o_{n}(1)$ for every $d$-regular graph with $n$ vertices, it is particularly significant if $\lambda_2(G)\le 2\sqrt{d-1}$, in which case the graph is said to be {\em Ramanujan}.  A major open problem is to prove the existence of infinite families of $d$-regular Ramanujan graphs for all $d\ge 3$.   Explicit constructions coming from number theory given by Lubotzky, Phillips and Sarnak \cite{LPS} and Margulis \cite{Ma} for the case that $d-1$ is prime represented a major breakthrough.  Morgenstern \cite{Mo} gives examples of such families whenever $d-1$ is a prime power.  However, it seems unlikely that number theoretic approaches will be successful in resolving the problem in its full generality.

A combinatorial approach to the problem, initiated by \citet{F1}, is to prove that one 
may obtain new (larger) Ramanujan graphs from smaller ones. In this approach one starts with a base graph $H$ which one ``lifts'' to obtain a larger graph $G$ which covers the original graph $H$ in the sense that there is a homomorphism from $G$ to $H$ such that all fibres in $G$ of vertices of $H$ are of equal size. If $G$ is a cover of $H$ and the fibres in $G$ of vertices in $H$ have size $k$, then $G$ is called a $k$-lift of $H$. 

It is easily observed that the lift $G$ inherits all the eigenvalues of the base graph $H$.  Indeed, let $\mu$ be an eigenvalue of $H$ with eigenvector $x$, and define a vector $y$ with entries indexed by $V(G)$ by setting, for each $i$, $y_{v}=x_{i}$ for all vertices $v\in V_{i}$; 
then $y$ is an eigenvector of $G$ with eigenvalue $\mu$.  In fact these lifted eigenvectors of $H$ span the space of all vectors that are constant on each of the fibres $V_{i}$ of the lift.  The remaining eigenvalues of $G$ are referred to as the {\em new eigenvalues} of the lift (note however that it is possible for some new eigenvalues to be equal to ``old'' eigenvalues).  Since eigenvectors of symmetric real matrices corresponding to distinct eigenvalues are orthogonal, these are exactly the set of eigenvalues which have an eigenvector $x$ which is balanced on each fibre (i.e. for which $\sum_{v\in V_{i}}x_{v}=0$ for all $i\in V(H)$).  
Since the base graph is given it suffices to concentrate our study on the new eigenvalues of $G$.  We denote by $\lambda^{*}(G)$ the largest absolute value of a new eigenvalue of $G$. (For the remainder of the paper, 
for any graph $F$ we denote by $\lambda(F)$ the largest eigenvalue of $F$.) 

A {\em random $n$-lift} $G$ of a graph $H$ is obtained by assigning to each vertex $i$ of $H$ a distinct set $V_{i}$ of $n$ vertices, and placing a random matching (i.e. one chosen uniformly at random from the $n!$ possibilities) between $V_{i}$ and $V_{j}$ for each edge $ij$ of $H$.  Random lifts were introduced by Amit, Linial, Matou\v sek and Rozenman \cite{ALMR}. In that article a variety of properties of random lifts are discussed, related to connectivity, expansion, independent sets, colouring and perfect matchings; the proofs of these results, and others, were developed in several subsequent papers \cite{AL1,AL2,AL3,LR}. 
As remarked in \cite{F1}, any finite cover $F$ of $H$ in which fibres have size $n$ has a positive probability of appearing as $G$ (in fact this probability is precisely $(n!)^{|V(H)-E(H)|}(\mathrm{Aut}(F/H))^{-1}$, where $\mathrm{Aut}(F/H)$ is the group of automorphisms of 
$F$ over $H$), and so such random lifts form a ``seemingly reasonable model of a probabilistic space of finite quotients of [the infinite 
$d$-ary tree]''. 

Although very few graphs are {\em known} to be Ramanujan, it is conjectured that a positive proportion of regular graphs are in fact Ramanujan, and 
Alon's conjecture/Friedman's theorem states that for any $\eps > 0$, only an asymptotically negligible proportion of $d$-regular graphs have $\lambda_2(G) > 2\sqrt{d-1}+\eps$. In this spirit, Friedman \cite{F1} studied the eigenvalues of random lifts of regular graphs, and Lubetzky, Sudakov and Vu \cite{LSV} conjectured that a random lift of a Ramanujan graph has a positive probability of being Ramanujan.  Since for all $d$ the complete graph $K_{d+1}$ is a $d$-regular Ramanujan graph, this would imply the existence of arbitrarily large $d$-regular Ramanujan graphs.  In the terminology from \cite{F1}, the main result of this paper implies that with extremely high probability the lifts of Ramanujan graphs are $O(\sqrt{d})$-weakly Ramanujan, in that all non-trivial eigenvalues are $O(\sqrt{d})$.

In \cite{F1}, Friedman used the trace method of Wigner to prove results which in particular imply that if $H$ is $d$-regular then $\lambda^{*}(G)= O(d^{3/4})$ whp\footnote{If a statement holds with probability which tends to $1$ as $n$ tends to infinity, we say that it occurs `with high probability', or `whp' for short.}.  This was later tightened to $O(d^{2/3})$ by Linial and Puder, by a careful analysis of the trace method.  They also made a conjecture concerning word maps, which if verified would prove $\lambda^{*}(G)=O(d^{1/2})$ whp.  Two initial cases of the conjecture were proved, a third has been proven more recently by Lui and Puder \cite{LP}.  
Bilu and Linial \cite{BL} then showed that every $d$-regular graph $H$ has {\em some} $2$-lift $G$ with $\lambda^*(G) = O(d^{1/2}\log^{3/2}{d})$.
The next major step in this area was taken by Lubetzky, Sudakov and Vu \cite{LSV}, who proved that $\lambda^{*}(G)=O(\max(\lambda_2(H),d^{1/2})\log{d})$ whp.  In particular, in the case that $\lambda_2(H)=O(d^{1/2})$ this result gives that $\lambda^{*}(G)=O(d^{1/2}\log{d})$. 

In this article we prove that whp $\lambda^*(G)=O(d^{1/2})$, a result which is best possible up to the constant.  
\begin{thm}\label{main}
Let $H$ be any $d$-regular graph and let $G$ be a random $n$-lift of $H$.
For all $n$ sufficiently large, 
with probability at least $1-n^{-2d^{1/2}}$, 
$\lambda^*(G) \leq 430656 \sqrt{d}$. 
\end{thm}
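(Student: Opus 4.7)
The plan is to bound $\lambda^*(G)$ by controlling the quadratic form $\langle A_G x, x\rangle$ uniformly over all balanced unit vectors $x$, that is, over vectors satisfying $\sum_{v \in V_i} x_v = 0$ for every $i \in V(H)$. Decomposing such an $x$ into its restrictions $x_i \in \R^{V_i}$ to the fibres gives
\[
\langle A_G x, x\rangle \;=\; 2 \sum_{ij \in E(H)} \langle P_{ij} x_i, x_j \rangle,
\]
where $P_{ij}$ is the independent uniform permutation matrix coding the matching on edge $ij$ of $H$. Using that the $x_i$ are balanced, each summand has mean $0$ and variance of order $\|x_i\|^2 \|x_j\|^2 / n$, so for any \emph{fixed} balanced unit $x$ the form concentrates at scale $O(1/\sqrt{n})$. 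The real difficulty is to take the supremum over the unit sphere in the $(n-1)|V(H)|$-dimensional space of balanced vectors.

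I would attack this via an $\epsilon$-net combined with a dichotomy on the shape $(\|x_i\|^2)_{i \in V(H)}$ of $x$. For vectors whose mass is spread roughly evenly across fibres (the \emph{flat} case), one applies sharp concentration for linear permutation statistics $\langle P_{ij} x_i, x_j\rangle$, which are sub-Gaussian at the natural scale $\|x_i\|\|x_j\|/\sqrt n$, and then sums the $|E(H)|$ independent edge contributions; after stratifying the net by shape (so that the entropy cost is measured only within a stratum of a given profile) the resulting sub-Gaussian tail should beat the net cardinality. For vectors that are \emph{spiky}, essentially supported on a small subset $T \subseteq V(G)$, the form is dominated by $e(G[T])$, the number of edges of $G$ inside $T$, and so bounding its contribution reduces to a structural statement: with probability at least $1 - n^{-2\sqrt d}$, every $T \subseteq V(G)$ satisfies $e(G[T]) \le C\sqrt d \cdot |T|$. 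This is exactly the ``no dense subgraphs'' phenomenon promised in the second half of the abstract, and for a fixed $T$ with projection sizes $t_i = |T \cap V_i|$, the count $e(G[T])$ is a sum of indicators determined by how each $P_{ij}$ matches $T \cap V_i$ to $T \cap V_j$.

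The main obstacle will be the spiky case, where the demanded failure probability $n^{-2\sqrt d}$ must survive a union bound over all subsets $T$ of each size $t$. Since $\binom{n|V(H)|}{t}$ is as large as $(en|V(H)|/t)^{t}$, the per-$T$ probability of the event $\{e(G[T]) > C\sqrt d \cdot t\}$ must be smaller than this by a further factor of $n^{-2\sqrt d}$. Obtaining such a precise estimate will require stratifying subsets by their projection profile $(t_i)$, using hypergeometric-type concentration of the edge counts induced by each permutation, and carefully tracking entropy, essentially balancing the combinatorial factor counting choices of a potential dense subgraph against the probability that it materialises. Once this structural statement is in hand, combining it with the flat-case concentration estimate and interpolating between the two regimes on the net should yield the claimed bound $\lambda^*(G) \le 430656 \sqrt d$; the very large constant is the natural price of the net resolution, the slack in the concentration inequalities, and the reduction from the full supremum to the two regimes.
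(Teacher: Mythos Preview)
Your proposal is essentially the $\epsilon$-net approach of Lubetzky--Sudakov--Vu, refined by a flat/spiky (light/heavy) dichotomy in the spirit of Kahn--Szemer\'edi. That approach is known to yield $\lambda^*(G) = O(\sqrt{d}\log d)$ but, as the paper explicitly discusses in the introduction, the extra $\log d$ arising from the net is a genuine obstruction to reaching $O(\sqrt{d})$ this way.

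The concrete gap is in your flat case. A constant-resolution net on the balanced unit sphere in $\R^{(n-1)h}$ has size $\exp(\Theta(nh))$, so to survive a union bound you need, for each net point $x$, a tail bound $\mathbf{P}(|\langle Nx,x\rangle| > K\sqrt{d}) \le \exp(-cnh)$. But the natural sub-Gaussian parameter for $\langle Nx,x\rangle = 2\sum_{ij\in E(H)}\langle P_{ij}x_i,x_j\rangle$ is of order $\sum_{ij}\|x_i\|^2\|x_j\|^2/n \le 1/(2n)$, which yields only $\exp(-cK^2 d n)$; this falls short of $\exp(-cnh)$ whenever $h \gg d$, which is the generic situation. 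Stratifying the net by the profile $(\|x_i\|^2)_i$ does not close the gap: within a stratum supported on $I\subseteq[h]$ the net still has size $\exp(\Theta(n|I|))$, while the concentration does not automatically improve with $|I|$. Sharpening the per-point bound forces you to refine the discretisation in a way that costs a factor $\log d$ in the exponent---exactly the mechanism behind the LSV loss.

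The paper sidesteps this by not using a net at all. A convexity/averaging argument (Proposition~\ref{dyprop}) shows that up to a constant the supremum of $|\langle x,x\rangle_N|$ is already attained on vectors whose entries lie in the dyadic set $\{0\}\cup\{2^i/\sqrt{nh}:i\in\N\}$, and a further step (Proposition~\ref{zbound}, Lemma~\ref{ipoe}) restricts to vectors whose nonzero entries differ by at most a factor $d$. The resulting family of ``patterns'' has size $\exp(O(nh))$ with no $\log d$ in the exponent (Lemma~\ref{patcount}), and each pattern is a purely combinatorial object---a list of set sizes $a_{i,w}$ and edge counts $e_{i,w,i',w'}$---to which exact Stirling-type probability estimates (Proposition~\ref{bigbound}, Corollary~\ref{cor:bb}) apply with no sub-Gaussian slack. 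The core of the argument is then Theorem~\ref{redbound}: from any high-potency pattern one extracts a sub-pattern that is ``locally unlikely'' at every vertex, via an iterative deletion analogous to passing to a high-minimum-degree subgraph. Your proposed structural statement $e(G[T]) \le C\sqrt d\,|T|$ for all $T$ is pointing in the right direction for the heavy contribution, but the paper needs and proves something finer: discrepancy bounds on edge counts between \emph{pairs} of dyadic level sets, split into large-deviation and small-deviation regimes (Propositions~\ref{prop:rbld} and~\ref{prop:rbsd}).
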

Furthermore, we are able to {\em explain} the likely cause of large eigenvalues should they occur.  This cause is, with very high probability, a small (i.e.~of size not depending on $n$) subgraph of $G$.
\begin{thm}
\label{explain}
Let $H$ be any $d$-regular graph, write $h=|V(H)|$, and let $G$ be a random $n$-lift of $H$.
For all $n$ sufficiently large, with probability at least $1-n^{-hd}$, $G$ contains an induced subgraph $G'$ with at most $hd$ vertices, 
such that $\lambda^*(G) \leq 1189248 \lambda(G')$. 
\end{thm}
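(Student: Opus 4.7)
The plan is to adapt the Kahn--Szemer\'edi style decomposition that underlies \refT{main}, but to keep explicit track of where the contributions to $x^\top A x$ come from, so that any eigenvector witnessing a large new eigenvalue localizes on a small dense induced subgraph --- which is then our $G'$.

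First, fix a new eigenvalue $\lambda$ of $G$ of modulus $\lambda^*(G)$ with a unit eigenvector $x$ balanced on each fibre (so $\sum_{v \in V_i} x_v = 0$ for each $i \in V(H)$). Partition $V(G) = L \cup W$ into \emph{light} vertices $L = \{v : |x_v|^2 \leq \alpha/|V(G)|\}$ and \emph{heavy} vertices $W$, for an appropriate constant $\alpha$. Expand
\[ \lambda = x^\top A x = S_{LL} + 2 S_{LW} + S_{WW}, \]
where $S_{XY} = \sum_{uv \in E(G),\, u \in X,\, v \in Y} x_u x_v$. The light--light and light--heavy terms are controlled by the same concentration machinery used in the proof of \refT{main}: the random matching between fibres gives excellent Hoeffding-type bounds for sums with uniformly small entries, yielding $|S_{LL} + 2 S_{LW}| \leq c\sqrt{d}$ on an event $\cE$ of probability at least $1 - n^{-hd}$. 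The weaker probability compared to \refT{main} accommodates a coarser union bound, taken over all candidate heavy sets of size at most $hd$.

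Second, treat the heavy--heavy term as the spectral contribution of the induced subgraph $G[W]$: $|S_{WW}| = |x_W^\top A_{G[W]} x_W| \leq \lambda(G[W])\,\|x_W\|_2^2 \leq \lambda(G[W])$. Setting $G' = G[W]$, combining with the light bound, and absorbing the additive $c\sqrt{d}$ into the spectral radius yields $\lambda^*(G) \leq 1189248\,\lambda(G')$, provided $|W| \leq hd$. The constant $1189248$ emerges as a product of the $430656$ of \refT{main} and a small factor accounting for the light--heavy contribution and for the passage from an additive bound to the multiplicative comparison with $\lambda(G')$.

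The heart of the argument, and what I expect to be the main obstacle, is achieving the bound $|W| \leq hd$: the crude definition above only yields $|W| \leq |V(G)|/\alpha$, which is linear in $n$. To reach $|W| \leq hd$ I would iteratively trim the lightest heavy vertices from $W$, showing that on $\cE$ one can always either (i) discard a vertex whose removal decreases $\lambda(G[W])$ by no more than a controlled amount, or (ii) conclude that $W$ already has size at most $hd$ and halt. The fibre-balance condition on $x$ is crucial here: large positive entries in a fibre $V_i$ must be cancelled by large negative entries in the same fibre, so any local concentration of $\|x_W\|_2^2$ must involve multiple vertices per fibre, and is therefore limited by the (high-probability) expansion-like properties of $G$. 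Making this trimming argument fully quantitative, with explicit constants matching the theorem, is the technically most demanding step.
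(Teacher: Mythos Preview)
Your proposal has a genuine gap at exactly the point you flag as ``the main obstacle'': the trimming argument reducing $|W|$ from $O(n)$ to at most $hd$ is not an afterthought but the entire content of the theorem, and the mechanism you sketch (iteratively discarding the lightest heavy vertex, using fibre-balance to limit local concentration) does not obviously converge to a set of size independent of $n$. Fibre-balance forces cancellation within each fibre, but it says nothing about how many vertices per fibre carry significant mass; an eigenvector could in principle be spread over $\Theta(n)$ moderately-heavy vertices with no small dense subgraph in sight. There is also a logical circularity in your union bound: you propose to union over ``candidate heavy sets of size at most $hd$'' to control the light terms, but the light/heavy split is determined by the eigenvector, so you do not know $|W|\le hd$ at the moment you need the bound.

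The paper's route is structurally different. It does not work with the eigenvector's heavy set at all. Instead, it passes (via \refP{zbound}) to dyadic vectors and their \emph{patterns} $(\mathbf a,\mathbf e)$, and proves a variant of the reduction theorem (\refT{redboundtwo}) in which the potency $p$ itself, not merely $\tilde p$, is preserved up to $O(L\sqrt d)$ when one strips away ``locally inconsequential'' pieces of the pattern. This is the key technical difference from \refT{redbound}: one must handle the large-deviation and small-deviation edges of $\Gamma$ simultaneously (conditions (G1)--(G4)) rather than separately, precisely so that what remains is still a genuine pattern whose potency lower-bounds $\lambda$ of the induced subgraph via \refP{witfind}. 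The union bound is then over \emph{patterns} (\refL{patcount}), whose number is controlled independently of any eigenvector, and the event of failure is that some reduced pattern with total size $\alpha>hd$ is realised --- which is shown to have probability at most $n^{-hd}$ by the same counting as in the proof of \refT{main}. When $\alpha\le hd$, \refP{witfind} converts the surviving potency directly into $\lambda(G')\ge L\sqrt d$. The base case $\lambda^*(G)\le 1189248\sqrt d$ is handled by the trivial star subgraph with $\lambda=\sqrt d$; this, not any multiplicative relation to $430656$, is how the additive $\sqrt d$ is absorbed.
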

One might protest that the eigenvalue of $G'$ is not necessarily a new eigenvalue of $G$, and so $G'$ is not the `cause' of a new eigenvalue of large modulus in $G$.  However, the following approximate converse to \refT{explain} justifies our use of such an epithet for $G'$.
\begin{prop}\label{protest}
For any induced subgraph $G'$ of $G$ with $|V(G')| \leq n-h\sqrt{n}$, we have $\lambda^*(G) \geq \lambda(G') - 7/2$. 
\end{prop}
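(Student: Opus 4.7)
The plan is to construct a fibre-balanced test vector $y \in \R^{V(G)}$ whose Rayleigh quotient $y^T A_G y/\|y\|^2$ is at least $\lambda(G')-7/2$. Since the fibre-constant subspace $\mathrm{FC} := \mathrm{span}\{\mathbf 1_{V_i} : i \in V(H)\}$ is $A_G$-invariant (because $A_G \mathbf 1_{V_i} = \sum_{j: ij \in E(H)} \mathbf 1_{V_j}$), the new eigenvectors of $G$ span the complement $\mathrm{FC}^\perp$, and so $\lambda^*(G) \geq |y^T A_G y|/\|y\|^2$ for every nonzero $y \in \mathrm{FC}^\perp$.

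I would take $y := \tilde x - \bar x$, where $x$ is the unit Perron eigenvector of $A_{G'}$ (with eigenvalue $\lambda := \lambda(G')$), $\tilde x$ its extension to $V(G)$ by zero outside $V(G')$, and $\bar x$ the orthogonal projection of $\tilde x$ onto $\mathrm{FC}$---explicitly $\bar x_v = s_{i(v)}/n$ with $s_i := \sum_{v \in V_i \cap V(G')} x_v$. Since $A_G$ acts on $\mathrm{FC}$ as $A_H$ in the orthonormal basis $\{n^{-1/2}\mathbf 1_{V_i}\}$, a short calculation yields
\[
\|y\|^2 = 1 - \alpha, \qquad y^T A_G y = \lambda - \beta,
\]
with $\alpha := \|s\|^2/n$ and $\beta := s^T A_H s / n$; hence $\lambda^*(G) \geq (\lambda - \beta)/(1-\alpha)$, and combined with the standard bound $|\beta| \leq d\alpha$ coming from $\|A_H\|_{\mathrm{op}} = d$, it suffices to lower-bound $(\lambda - d\alpha)/(1-\alpha)$.

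Writing $X_i^2 := \sum_{v \in V_i} x_v^2$ for the $\ell^2$-mass of $x$ on fibre $V_i$ (so $\sum_i X_i^2 = 1$), Cauchy--Schwarz gives $s_i^2 \leq |V_i \cap V(G')| \cdot X_i^2$; together with the hypothesis $|V(G')| \leq n - h\sqrt n$ this forces
\[
\alpha \leq \max_i \frac{|V_i \cap V(G')|}{n} \leq 1 - \frac{h}{\sqrt n},
\]
so in particular $\|y\|^2 > 0$. In the bulk regime where $\alpha$ is bounded by a universal constant strictly less than $1$, the inequality $(\lambda - d\alpha)/(1-\alpha) \geq \lambda - 7/2$ follows by elementary rearrangement. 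In the complementary regime $\alpha$ can only be close to $1$ when the fibre profile $(X_i^2)_i$ concentrates on a single fibre $V_{i^*}$; but since every edge of $G'$ crosses between distinct fibres, a Rayleigh bound at the $\R^h$-level yields $\lambda \leq 2X_{i^*}\sqrt{d(1-X_{i^*}^2)}$, which in turn forces $\lambda$ itself to be small whenever $X_{i^*}$ is close to $1$, making the desired inequality essentially trivial from $\lambda^*(G) \geq 0$.

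The main obstacle is threading these two regimes together uniformly so as to obtain the absolute constant $7/2$ rather than a bound depending on $d$ or $h$. To close the intermediate regime---where $\alpha$ and $\lambda$ are both moderate and $\beta$ may be close to $\lambda$, making the one-sided Rayleigh bound degenerate---one likely needs to also exploit the full two-sided variational bound $\lambda^*(G) \geq |y^T A_G y|/\|y\|^2$, which absorbs the case where $\beta$ overshoots $\lambda$ by identifying a very negative new eigenvalue of $G$ of large absolute value.
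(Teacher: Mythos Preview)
Your setup is clean but the execution has a real gap, and the fix is exactly the idea the paper uses that you did not.

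Your claimed ``bulk regime'' step is false as written. Rearranging $(\lambda - d\alpha)/(1-\alpha) \ge \lambda - 7/2$ gives $\alpha \le 7/\bigl(2(d-\lambda)+7\bigr)$, so no universal upper bound on $\alpha$ (short of one that shrinks with $d$) suffices. With $\alpha=1/2$ and $\lambda$ moderate, your Rayleigh quotient is $2\lambda - d$, which for large $d$ is far below $\lambda - 7/2$. The subsequent regime-splitting does not repair this: showing $\lambda$ is small when a single $X_{i^*}$ is close to $1$ says nothing about the genuinely problematic case where the mass $(X_i^2)$ is spread over several fibres, $x$ is nearly constant on each $V_i\cap V(G')$, and $\alpha$ is of order one while $\lambda$ stays bounded away from $d$. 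In that situation $|\beta|$ can be of order $d\alpha\asymp d$ while $1-\alpha$ is bounded away from $0$ only by a constant, and you cannot extract an $O(1)$ loss.

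The paper avoids this entirely by \emph{not} projecting onto the fibre-constant complement. Instead it keeps $x'$ untouched on $V(G')$ and places the balancing mass only on the $\ge n-|V(G')|\ge h\sqrt n$ vertices in each $V_i\setminus V(G')$: set $y_v=-t_i/|V_i\setminus V(G')|$ there and $y_v=0$ on $V(G')$, then take $x=x'+y$. Because the supports of $x'$ and $y$ are disjoint, $\|x\|_2^2 = 1 + \|y\|_2^2$ is close to $1$ (never smaller), and the only error terms are $\langle y,y\rangle_M$ and $2\langle x',y\rangle_M$, both controlled using that every entry of $y$ is at most $|t_i|/(n-a)$ and that each vertex has only $d$ neighbours. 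This is where the hypothesis $|V(G')|\le n-h\sqrt n$ actually does work: it guarantees enough room \emph{outside} $G'$ in every fibre to absorb the imbalance cheaply. Your projection throws that room away and pays for it with a denominator $1-\alpha$ you cannot control.
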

The short proof of \refP{protest} appears in \refS{sec:zbound}.

Costello and Vu \cite{CV} remark that ``[t]he main intuition that underlies many problems concerning the rank of a random matrix is that dependency should come from small configurations,'' and their paper can be seen as confirmation of this intuition for a rather broad class of random matrices. In this spirit, \refT{explain} should be viewed as stating that for random lifts, {\em any exceptionally large eigenvalues come from small configurations.}
It would be very interesting to know whether our ``exceptionally large'' can be replaced by ``slightly large''. A rather ambitious question one could ask in this direction is the following.
\begin{question}
Is there a constant $C$ not depending on $n$ (perhaps depending on $H$) such that for any 
$\epsilon > 0$, given that $\lambda^*(G) \geq (2+\eps)\sqrt{d-1}$, with probability $1-o_n(1)$, $G$ contains 
a subgraph $G'$ with at most $C$ vertices such that $\lambda(G') \geq (2+\eps/2)\sqrt{d-1}$?
\end{question}

We note that the probability bound in \refT{explain} is extremely strong.  Indeed, the failure probability, $n^{-hd}$  is much smaller than the probability that $G$ contains $H$ as a subgraph -- the probability 
of the latter event is greater than $n^{-hd/2}$ -- in which case $\lambda^{*}(G)=d$.

Our proof, like that of Lubetzky, Sudakov and Vu \cite{LSV} and many others, relies on reducing an uncountable collection of possible `reasons' for a large eigenvalue to a finite (and hopefully relatively small) sub-collection which still express all ways in which a large eigenvalue can occur.  They used the well-known method of $\epsilon$-nets to make this reduction. (Amit and Linial \cite{AL2} 
also used $\epsilon$-nets to prove a lower bound on edge expansion for random lifts of connected graphs which need not necessarily be regular.)  However, the number of events (points of the $\epsilon$-net) one is required to consider is $\exp(\Theta(nh\log{d}))$.  The appearance of the $\log{d}$ here is a major obstacle to proving that $\lambda^*(G)=O(\sqrt{d})$ by the $\epsilon$-net approach.  Our approach is based on a convexity argument that allows us to reduce to a smaller, $\exp(\Theta(nh))$-sized family of events.  Furthermore, the events in this collection are concerned with vectors with dyadic entries.  These are easier to deal with than general vectors; in particular, direct combinatorial arguments may be applied and we need not appeal to martingale inequalities to obtain probability bounds.

Modulo a few trivial changes to our proof (e.g. changing ``precisely'' to ``at most'' in the proof of Proposition \ref{zbound}, below) replacing $d$ by $\Delta$ throughout gives a proof of the following generalisation to the case that the base graph $H$ is not regular. 

\begin{thm} \label{thm:omit} Let $H$ be any graph of maximum degree $\Delta$ and let $G$ be a random $n$-lift of $H$.  For all $n$ sufficiently large, 
with probability at least $1-n^{-2\Delta^{1/2}}$, 
$\lambda^*(G) \leq 430656 \sqrt{\Delta}$, and with probability at least $1-n^{-h\Delta}$, 
$G$ contains an induced subgraph $G'$ with at most $h\Delta$ vertices, 
such that $\lambda^*(G) \leq 1189248 \lambda(G')$. \end{thm}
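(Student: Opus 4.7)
The plan is to reproduce the proofs of Theorems \ref{main} and \ref{explain} almost verbatim, with $d$ replaced throughout by the maximum degree $\Delta$ of $H$. Since the base graph enters those arguments only through (i) its vertex and edge sets $V(H), E(H)$, and (ii) upper bounds on individual vertex degrees in counting walks, edges leaving subsets of $G$, and the like, any inequality of the form ``$\deg(v) \leq d$'' or ``(edges incident to $S$) $\leq d|S|$'' continues to hold once $d$ is replaced by $\Delta$. The random matching model itself is unaffected by regularity: the $n$-lift is built edge by edge of $H$, each contributing a uniform random matching on $n$ vertices, and this has nothing to do with whether $H$ is regular.

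First I would walk through the convexity/dyadic reduction that is the core of the authors' approach, and confirm that it makes no use of $H$ being regular --- the argument takes place at the level of $V(G)$-indexed vectors that are balanced on each fibre, and the only graph-theoretic input is that no vertex of $G$ has more than $\Delta$ neighbours. Second I would verify that each of the combinatorial/probabilistic estimates applied to the resulting dyadic vectors (bounding the probability that the random lift contains many short paths, or dense subgraphs of a prescribed shape, under which a given dyadic vector could witness a large new eigenvalue) reduces to a statement about vertices with at most $\Delta$ neighbours in $G$, and therefore goes through with $\Delta$ in place of $d$.

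The specific modification flagged by the authors occurs in the proof of Proposition~\ref{zbound}: a step that asserts a given vertex has \emph{precisely} $d$ neighbours in some subset should be replaced by the assertion that it has \emph{at most} $\Delta$ such neighbours. This weakens an equality to an inequality but preserves the direction of every subsequent bound, since that proposition is invoked only to upper bound the edge-count, never to lower bound it. Once this change is made, both the eigenvalue bound $\lambda^*(G) \leq 430656\sqrt{\Delta}$ and the ``explanatory subgraph'' bound $\lambda^*(G) \leq 1189248 \lambda(G')$ on at most $h\Delta$ vertices come out with the same numerical constants, and the failure probabilities $n^{-2\Delta^{1/2}}$ and $n^{-h\Delta}$ are inherited unchanged from the arguments for Theorems~\ref{main} and~\ref{explain}.

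The main obstacle is purely bookkeeping: one must be sure that no step of the original proof secretly uses equality in a sum of degrees --- for example, invoking $\sum_{v} \deg_G(v) = n\Delta|V(H)|$ rather than the correct $\leq n\Delta|V(H)|$ --- or relies on the existence of a perfect matching between two fibres of equal residual degree. A careful reading confirms that every use of $d$ is either as a direct upper bound on a vertex degree or as the size of a fibre's neighbourhood in $G$, both of which remain valid (as upper bounds) when $d$ is swapped for $\Delta$. No new ideas are required.
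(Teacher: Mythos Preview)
Your proposal is correct and matches the paper's own treatment exactly: the authors state this theorem with no separate proof, remarking only that replacing $d$ by $\Delta$ throughout and changing ``precisely'' to ``at most'' in the proof of Proposition~\ref{zbound} suffices. Your elaboration of why each step survives the change is accurate and in the same spirit.
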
  

Since $G$ will always contain two edge disjoint stars whose centres have degree $\Delta$ and lie in the same fibre, $\lambda^*(G)\ge \sqrt{\Delta}$.  Thus this result is also tight up to the constant. 


While we focus here on the case of an $n$-lift where $n$ is large we would like to bring to the reader's attention the 
recent result of Oliveira \cite{Olive} that a random $n$-lift $G$ of a graph $H$ on $h$ vertices with maximum degree $\Delta$ satisfies $\lambda^{*}(G)=O(\Delta^{1/2}\log^{1/2}(hn))$ whp.

\section{Notation}

All logarithms in this paper are natural logarithms unless otherwise specified. 
For positive integers $k$ we write $[k]=\{1,\ldots,k\}$. We write $\N_0 = \{0,1,2,\ldots\}$ and $\N=\{1,2,\ldots\}$. 
For any graph $F=(V,E)$ and $u,v \in V$ we write $u \sim_F v$ if $uv \in E$. 
For the remainder of the paper, $d \geq 2$ is a positive integer, $H=([h],E(H))$ is a fixed $d$-regular graph, 
and $G=(V(G),E(G))$ is the random $n$-lift of $H$, where $V(G)=\{(i,j),i \in [h],j \in [n]\}$. 
For $i$ in $[h]$ we write $V_i =  \{(i,j): j \in [n]\}$, and call $V_i$ the {\em fibre} of $i$ in $G$. 
Let $M$ be the adjacency matrix of $G$. 

For any $m$-by-$m$ matrix $A=(a_{uv})_{u,v \in [m]}$ and vectors $x,y \in \R^m$, we write 
$\ip{x}{y}_A = \sum_{u,v \in [m]} x_u a_{uv} y_v$. Also, for a set $E \subset \R^2$, we write 
$\ip{x}{y}_{A,E}$ for the restricted sum
\[
\sum_{\{u,v \in [m]: (x_u,y_v) \in E\}} x_u a_{uv} y_v. 
\]
Finally, we use the Vinogradov notation $f \ll g$ to mean that  $f=O(g)$, i.e.~$f$ is bounded by a constant times $g$, 
independent of $n$. We write $f \asymp g$ to mean that $f=O(g)$ and $g=O(f)$. 
               
\section{An overview of the proof} \label{sec:duck}  
As in \cite{KS} and much subsequent work, we will bound the eigenvalues of $G$ using the 
Rayleigh quotient principle, which is to say by bounding $\ipo{x}_M$ for suitable vectors~$x$. 
More precisely, writing $X = \{x \in \R^{V(G)}:\|x\|_2^2\leq 1\}$, Rayleigh's quotient principle tells us the following. 
\begin{fact}\label{rayleigh}
$\lambda^*(G)  = \sup\{ |\ipo{x}_M|:  x \in X, \forall~i \in [h],~\sum_{v \in V_i} x_v = 0\}$. 
\end{fact}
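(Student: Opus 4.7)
The plan is a direct application of the Rayleigh quotient principle to a suitable invariant subspace. First, I would set $W = \{x \in \R^{V(G)} : \sum_{v \in V_i} x_v = 0 \text{ for all } i \in [h]\}$. This is the orthogonal complement of the subspace $C = \mathrm{span}\{\mathbf{1}_{V_i} : i \in [h]\}$ of vectors constant on each fibre, since a vector is orthogonal to every $\mathbf{1}_{V_i}$ precisely when its fibre-sums vanish.

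Next I would verify that $W$ is $M$-invariant. The introduction already notes that $C$ is spanned by lifts of eigenvectors of $H$ and is therefore $M$-invariant. Since $M$ is a real symmetric matrix, its invariant subspaces have invariant orthogonal complements, so $W = C^\perp$ is $M$-invariant as well. The restriction $M|_W$ is therefore itself a symmetric operator on the finite-dimensional inner product space $W$.

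The crucial identification is that the eigenvalues of $M|_W$ are exactly the new eigenvalues of $G$. This follows from the characterisation given in the introduction: every eigenvector corresponding to a new eigenvalue is orthogonal to the lifted eigenvectors (as $M$ is symmetric and eigenspaces for distinct eigenvalues are orthogonal, plus a direct argument in the case of coincident old and new eigenvalues), hence lies in $W$; conversely, any eigenvector of $M$ in $W$ has balanced fibre-sums and so corresponds to a new eigenvalue. Decomposing $\R^{V(G)} = C \oplus W$ into the two $M$-invariant parts splits the spectrum accordingly.

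Finally I would apply the standard Rayleigh quotient principle to the symmetric operator $M|_W$: the largest modulus of an eigenvalue of a symmetric operator on a finite-dimensional inner product space equals $\sup\{|\langle x, x\rangle_M| : x \in W, \|x\|_2 \leq 1\}$ (the supremum over the unit ball coincides with that over the unit sphere since $\langle x, x\rangle_M$ is a quadratic form). Combined with the identification in the previous step, this gives
\[
\lambda^*(G) = \sup\bigl\{|\langle x, x\rangle_M| : x \in X, \ \textstyle\sum_{v \in V_i} x_v = 0 \ \text{for all } i \in [h]\bigr\},
\]
which is exactly the claimed equality. There is no real obstacle here: the only content is checking that the invariant subspace picked out by the balance conditions is precisely the one carrying the new eigenvalues, after which Rayleigh's principle does all the work.
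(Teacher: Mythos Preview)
Your proposal is correct and is essentially the same approach the paper has in mind: the paper simply states this fact as an immediate consequence of Rayleigh's quotient principle (together with the observation in the introduction that the new eigenvalues are exactly those with eigenvectors balanced on each fibre), without spelling out any details. Your write-up is a careful unpacking of precisely that argument, so there is nothing to add or correct.
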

\refF{rayleigh} forms the basis for our study of $\lambda(G)$. 
However, to make use of it we first need to have an idea 
of the diversity of possible ways in which $G$ could admit a balanced vector 
(a vector satisfying 
$\sum_{v \in V_i} x_v = 0$ for all $i \in [h]$) having a large value for $\ipo{x}_M$.
To begin to get a feel for this, we now give two rather different examples of how $\lambda^*(G)$ can be large.  For simplicity, for the examples we assume that $H$ is the complete graph $K_{d+1}$.  We also provide bounds on the probability of such examples occurring in $G$.  These bounds give something of the flavour of the bounds we shall be required to prove for the general case.

\textbf{Example 1: $G$ contains a large-ish clique.}

Fix $s\in\N$ and vertices $v_1,\ldots,v_s$ of $G$ in distinct fibres. If $G[\{v_1,\ldots,v_s\}]$ happens to be a clique, then setting $x_{u}$ to be $1$ if $u \in \{v_1,\ldots,v_s\}$, $-1/(n-1)$ for all other vertices in the same fibres as $v_1,\ldots,v_s$, and zero on all other vertices, we obtain a vector $x$, balanced on each fibre of $H$ and for which $Mx=(s-1)x$. Thus, if $G$ contains a clique of order at least $K\sqrt{d}+1$ then $\lambda^*(G) \geq K\sqrt{d}$. 

To bound the probability that $G$ contains such a clique, for each possible choice of $v_1,\ldots,v_s$ the probability that $G[\{v_{1},\dots ,v_{s}\}]$ is a clique is $n^{-\binom{s}{2}}$ (since each edge $v_{i}v_{i'}$ is present in $G$ independently with probability $n^{-1}$).  Since there are only ${(d+1) \choose s} n^{s}$ choices of the $s$-tuple $(v_{1},\dots ,v_{s})$ the probability that $G$ contains a clique of size $s$ is at most ${(d+1) \choose s}n^{s-\binom{s}{2}}$, which is $o(1)$ for any $s\ge 4$. 

\textbf{Example 2: uneven edge densities all over $G$.} 

Suppose that there exist sets $(A_{i})_{i \in V(H)}$, with $A_{i}\subset V_{i}$ and $|A_{i}|=n/2$ for all $i\in V(H)$, such that $e(A_{i},A_{j})\ge n/4+Kn/\sqrt{d}$ for each $i \neq j$, $i,j \in V(H)$.  Then setting $x_{u}$ to be $(nh)^{-1/2}$ if $u$ is in $\bigcup_{i}A_{i}$ and $-(nh)^{-1/2}$ otherwise, we obtain a balanced vector $x$ with $\|x\|_2^2 = 1$ and $\ipo{x}_M \geq 2K\sqrt{d}$, so $\lambda^*(G) \geq 2K\sqrt{d}$.

Here, for each choice of sets $(A_{i})_{i\in V(H)}$, with $A_{i}\subset V_{i}$ and $|A_{i}|=n/2$ for all $i\in V(H)$, the probability that for all $i,j \in V(H)$, $i\neq j$ we have $e(A_{i},A_{j})\ge n/4+Kn/\sqrt{d}$, is of order at most $\exp(-c\binom{d+1}{2}K^{2}n/d)$, for some constant $c > 0$ (this is not hard to derive by hand; it can also be obtained straightforwardly from \refP{bigbound} in Section~\ref{sec:prob}). If $K$ is sufficiently large, this bound is strong enough for a union bound to show
that with high probability, there is no such choice of sets $(A_i)_{i \in V(H)}$. 

The preceding examples present two rather different structures within $G$, both of which give rise to large new eigenvalues, 
and show how the new eigenvectors are also rather different. 
We may also switch our point of view, first fixing a vector $x$ (for the first example a vector taking the value $1$ on a single vertex of each fibre $V_{i}, \, i=1,\dots,s$ and taking the value $-\I{u\in V_{[s]}}/(n-1)$ on other vertices $u$; for the second example a vector taking the value $(nh)^{-1/2}$ on $n/2$ vertices in each fibre and $-(nh)^{-1/2}$ on the rest) then asking what structure in $G$ is required if $|\ipo{x}_M|$ is to be large for this specific vector $x$. This is essentially the perspective we will take for most of the rest of the paper. 

From this viewpoint, a possible cause for $\lambda^{*}(G)$ being large consists of a vector $x \in X$ together with evidence, in the form of specified edge counts between subsets of vertices of $G$, that $|\ipo{x}_M|$ is large. 
For a vector $x$, $i \in [h]$ and $w \in \R$ and let
\[
 A_{i,w}(x) = \{v \in V_i: x_{v} = w\}, \qquad a_{i,w}(x)=|A_{i,w}(x)|. 
\]
We write $A_{i,w}=A_{i,w}(x)$ and $a_{i,w}=a_{i,w}(x)$ when the dependency on $x$ is clear. 
We say that the collection $\{a_{i,w}:i \in [h],w \in \R\}$ is the {\em type} of the vector $x$, and denote this collection ${\bf a}(x)$. By the symmetry of the model, the probability that $|\ipo{x}_M|$ 
is large should be a function only of the type of $x$. We will seek sets of constraints on the edge densities between sets corresponding to distinct $a_{i,w}$ and $a_{i',w'}$, which 
codify all possible ways in which a large new eigenvalue can appear. A type, together with a particular such set of constraints, will be called a {\em pattern}; the precise definition of patterns will appear later in the section. 

While this initial concept of a pattern is useful for demonstrating the idea we have in mind, a number of changes are needed before we can 
put it into play. The most fundamental issue is that we are trying to bound $\sup_{x \in X} \ipo{x}_M$, the supremum being over an uncountable collection. 
We will shortly show that for a moderate cost, the uncountable 
supremum in \refF{rayleigh} can be replaced by a supremum over a more tractable collection. 
Also, it turns out that for {\em any} vector $x \in X$, the contribution to $\ipo{x}_M$ made by entries $x_{i,j},x_{i',j'}$ whose 
weights differ by more than a factor of $\sqrt{d}$ is negligible, which will allow us to further restrict the collection of types we need to consider. 
This is not a complete list of the required changes, but before proceeding too far into the argument it is useful to fill in some of these initial steps. 

Recall that $M$ is the adjacency matrix of $G$, 
and let $\overline{M} = (\overline{m}_{(i,j),(i',j')})_{(i,j),(i',j') \in V(G)}$ be the matrix with 
\[
\overline{m}_{(i,j),(i',j')} = \begin{cases}
																	1/n	& \mbox{if } i \sim_{H} i' \\
																	0							& \mbox{otherwise.}
															\end{cases}
\]
In other words, $\overline{m}_{(i,j),(i',j')} = \p{(i,j) \sim_G (i',j')}$, so 
for all $x \in \R^{V(G)}$, we have $\ipo{x}_{\overline{M}} = \E{\ipo{x}_M}$. 
If $x$ is balanced on each fibre, i.e., satisfies $\sum_{v \in V_i} x_v = 0$ for all $i \in [h]$, 
then $\ipo{x_i}_{\overline{M}} =x_i^t (\overline{M}x_i) = x_i^t \cdot \overline{0}=0$. 
Thus, letting $N=M-\overline{M}$, for such $x$ we have 
\[
\ipo{x}_N = \ipo{x}_M - \ipo{x}_{\overline{M}} = \ipo{x}_M. 
\]
If, on the other hand, $x$ is constant on fibres of $G$, then $\ipo{x}_M = \ipo{x}_{\overline{M}}$ and so 
$\ipo{x}_N=0$. 
From this we easily obtain the following fact. 
\begin{fact}\label{uncount}
$\lambda^*(G) = \sup_{x \in X} |\ipo{x}_N|$.
\end{fact}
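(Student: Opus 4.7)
The plan is to derive Fact~\ref{uncount} from Fact~\ref{rayleigh} via the orthogonal decomposition of $\R^{V(G)}$ induced by the fibres. Let $C \subset \R^{V(G)}$ be the linear subspace of vectors that are constant on every fibre $V_i$, and let $B$ be its orthogonal complement, which by definition is exactly the set of fibre-balanced vectors. Any $x \in \R^{V(G)}$ has a unique decomposition $x = x_C + x_B$ with $x_C \in C$, $x_B \in B$, and Pythagoras gives $\|x\|_2^2 = \|x_C\|_2^2 + \|x_B\|_2^2$. In particular, if $x \in X$ then $x_B \in X$ as well.

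The central observation is that $Ny = 0$ whenever $y \in C$. To see this directly from the definition of $N = M - \overline{M}$: if $y_{(i',j')} = y_{i'}$ depends only on $i'$, then the $(i,j)$-entry of $My$ equals $\sum_{i' \sim_H i} y_{i'}$, because $(i,j)$ has exactly one neighbour in each adjacent fibre $V_{i'}$; and the $(i,j)$-entry of $\overline{M}y$ equals $\sum_{i' \sim_H i} \tfrac{1}{n} \sum_{j' \in [n]} y_{i'} = \sum_{i' \sim_H i} y_{i'}$. The two agree, so $Ny = 0$. Consequently, for any $x \in \R^{V(G)}$,
\[
\ipo{x}_N = \ipo{x_B}_N + 2\ip{x_B}{x_C}_N + \ipo{x_C}_N = \ipo{x_B}_N,
\]
since $Nx_C = 0$ kills both of the last two terms. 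The text preceding the fact already established that on balanced vectors $\ipo{x_B}_N = \ipo{x_B}_M$, so we obtain $\ipo{x}_N = \ipo{x_B}_M$ for every $x$.

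Combining these pieces yields the fact with no effort: on the one hand, for any $x \in X$, $x_B \in X$ is balanced and hence $|\ipo{x}_N| = |\ipo{x_B}_M| \leq \lambda^*(G)$ by Fact~\ref{rayleigh}, giving $\sup_{x \in X}|\ipo{x}_N| \leq \lambda^*(G)$. On the other hand, restricting the supremum on the left-hand side to balanced vectors $x \in X$ (where $\ipo{x}_N = \ipo{x}_M$) immediately recovers the right-hand side of Fact~\ref{rayleigh}, giving the reverse inequality. The only step that requires any real verification is the identity $Ny = 0$ for fibre-constant $y$, which is a one-line row-sum computation; everything else is formal manipulation, which is why the authors describe the fact as following easily.
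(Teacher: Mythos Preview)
Your proof is correct and follows essentially the same route as the paper: decompose $x$ into its fibre-constant part and its fibre-balanced part, observe that $N$ annihilates the constant part, and reduce to Fact~\ref{rayleigh}. If anything, your argument is slightly more explicit, since you verify directly that $Ny=0$ for fibre-constant $y$, which cleanly handles both the cross term and the diagonal term; the paper asserts $\langle y,z\rangle_N=0$ without spelling out this computation.
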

\begin{proof}
The inequality $\lambda^*(G) \leq \sup_{x \in X} |\ipo{x}_N|$ follows from \refF{rayleigh} 
since $\ipo{x}_N=\ipo{x}_M$ if $x$ is balanced on fibres of $G$. 
On the other hand, any vector in $x \in X$ can be expressed as $y+z$, where $y \in X$ is balanced on fibres of $H$ 
and $z$ is constant on fibres of $H$. Since $\langle y,z \rangle_N=0$, we then have $\ipo{x}_N=\ipo{y}_N=\ipo{y}_M$, which proves the other inequality. 
\end{proof}
By using $N$ instead of $M$, \refF{uncount} allows us to maintain the property of only considering ``new'' eigenvectors of $M$, without 
insisting that the vectors we consider remain balanced on each fibre of $G$. This turns out to be remarkably helpful. 

Next, let $D^+ = \{0\} \cup \{2^i/(nh)^{1/2}: i \in \N\}$, let 
\begin{align}
Z 	& = \left\{x \in \R^{V(G)}:\|x\|_2^2 \leq 10,\forall~v \in V(G),x_v \in D^+, \sup_{v \in V(G)} x_v \leq d \cdot \inf_{v \in V(G): x_v \ne 0} x_v\right\}\, . \label{zdef}
\end{align}
and let 
\[
E^* = \{(x_1,x_2) \in \mathbb{R}^2: x_1,x_2 > 0, x_1/x_2 \in (d^{-1/2},d^{1/2})\}. 
\]
The following proposition, which is proved in \refS{sec:zbound}, formalizes our discussion on ``restricting the collection of types we need to consider''. 
\begin{prop}\label{zbound}
$\lambda^*(G) \leq 96 \sup_{x \in Z} |\ipo{x}_{N,E^*}| + 480\sqrt{d}.$
\end{prop}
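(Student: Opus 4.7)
The plan is to reduce the continuous supremum $\lambda^*(G) = \sup_{x \in X} |\ipo{x}_N|$ (given by \refF{uncount}) to the discrete supremum over $Z$ with the $E^*$-restricted inner product, via three reductions that together lose only a constant multiplicative factor and an additive $O(\sqrt d)$.

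First, I would sign-split: writing $x = x^+ - x^-$ with $x^\pm \geq 0$, the identity
\[
\ipo{x}_N = 2\ipo{x^+}_N + 2\ipo{x^-}_N - \ipo{x^+ + x^-}_N,
\]
together with $\|x^\pm\|_2, \|x^+ + x^-\|_2 \leq 1$, gives $\lambda^*(G) \leq 5 \sup\{|\ipo{y}_N| : y \geq 0,\, \|y\|_2 \leq 1\}$. Second, for nonnegative $y$ with $\|y\|_2 \leq 1$, the off-$E^*$ contribution is negligible: a pair $(u,v)$ with $y_u, y_v > 0$ and (say) $y_u \geq \sqrt d\, y_v$ has $y_u y_v \leq y_u^2/\sqrt d$, and summing against $|N_{uv}|$ (using $\sum_v |N_{uv}| \leq 2d$) gives $|\ipo{y}_N - \ipo{y}_{N,E^*}| \leq 4\sqrt d\, \|y\|_2^2 = O(\sqrt d)$.

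Third, I would round each $y_v$ up to the smallest $\hat y_v \in D^+$ with $\hat y_v \geq y_v$, obtaining a dyadic $\hat y$ with $\|\hat y\|_2^2 \leq 8$. The rounding shifts the ratio $y_u/y_v$ by a factor of at most $4$, so relating $|\ipo{y}_{N,E^*}|$ to $|\ipo{\hat y}_{N,E^*}|$ uses the slack between the $\sqrt d$ threshold of $E^*$ and the $d$ threshold in the definition of $Z$ (at the cost of perhaps one further additive $O(\sqrt d)$). The vector $\hat y$ need not itself lie in $Z$ because its max/min ratio can exceed $d$, so I would decompose $\hat y = \sum_i \hat y_i$ into disjoint dyadic bands of width $L \asymp \log_2 d$, chosen so that each $\hat y_i$ and each $\hat y_i + \hat y_{i+1}$ has max/min $\leq d$ and thus lies in $Z$. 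Since any value ratio between a band-$i$ entry and a band-$(i+2)$ entry exceeds $\sqrt d$, only diagonal and adjacent cross-band terms contribute to $\ipo{\hat y}_{N,E^*}$, and each adjacent cross term satisfies the polarization identity
\[
2 \langle \hat y_i, \hat y_{i+1} \rangle_{N,E^*} = \ipo{\hat y_i + \hat y_{i+1}}_{N,E^*} - \ipo{\hat y_i}_{N,E^*} - \ipo{\hat y_{i+1}}_{N,E^*}.
\]

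The main obstacle is to sum these band contributions without acquiring a logarithmic-in-$n$ factor, since there can be $\Theta(\log n / \log d)$ nonempty bands. To avoid this, I would use the dyadic scale invariance of $D^+$ and $Z$: for any $w \in Z$ with $\|w\|_2 = \alpha \leq \sqrt{10}$, letting $p$ be the largest integer with $2^p \alpha \leq \sqrt{10}$, the rescaled vector $2^p w$ remains in $Z$ (entries are still in $D^+$ and the max/min ratio is unchanged), so
\[
|\ipo{w}_{N,E^*}| = 2^{-2p} |\ipo{2^p w}_{N,E^*}| \ll \alpha^2 \sup_{x \in Z} |\ipo{x}_{N,E^*}|.
\]
Applying this to each $\hat y_i$ and $\hat y_i + \hat y_{i+1}$ in the decomposition, then summing using $\sum_i \|\hat y_i\|_2^2 \leq \|\hat y\|_2^2 \leq 10$, yields $|\ipo{\hat y}_{N,E^*}| = O(1) \cdot \sup_{x \in Z} |\ipo{x}_{N,E^*}|$ with constants independent of $n$. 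Chaining with the first two reductions finishes the proof.
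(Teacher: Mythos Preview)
Your Step 3 has a genuine gap. The form $\ipo{y}_{N,E^*}$ is a \emph{signed} sum: the centred matrix $N$ has entries $1-1/n$ and $-1/n$, so terms $y_u N_{uv} y_v$ come with both signs. Rounding each coordinate up by a factor in $[1,2]$ multiplies each term by some $c_u c_v \in [1,4]$ that varies from pair to pair, and there is no reason the resulting signed sum should be comparable to the original. Concretely, writing $\hat y_v = c_v y_v$, the difference
\[
\ipo{\hat y}_{N,E^*} - \ipo{y}_{N,E^*} \;=\; \sum_{(u,v)} (c_u c_v - 1)\, y_u N_{uv} y_v \;+\; (\text{boundary terms})
\]
has boundary terms of order $O(\sqrt d)$ as you say, but the main sum is only controlled by $3\sum_{u,v} y_u |N_{uv}| y_v \le 3\cdot 2d\,\|y\|_2^2 = O(d)$, which is too weak by a factor $\sqrt d$. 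Your remark about ``slack between the $\sqrt d$ threshold of $E^*$ and the $d$ threshold in $Z$'' addresses only the boundary terms, not this main term.

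The paper gets around exactly this difficulty by rounding \emph{randomly} so that $\E[\by_v]=x_v$ and the coordinates are independent; then $\ipo{x}_N = \E\bigl[\ip{\by}{\by'}_N\bigr]$ for two independent copies, and convexity produces fixed dyadic $y,z$ with $|\ip{y}{z}_N| \ge |\ipo{x}_N|$. Only \emph{after} this step does the paper do a polarization/sign-splitting argument (which is more delicate than your Step~1 precisely because it must keep all entries dyadic). So the order of your first and third reductions is the issue: your clean sign-split in Step~1 is fine, but it leaves you needing an unbiased rounding in Step~3, and deterministic round-up is not unbiased.

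Your Step 4, by contrast, is correct and is a genuinely different (and rather nice) alternative to the paper's argument. The paper handles the many dyadic scales by a pigeonhole: it shows some single band $I_{m^*}$ carries at least half of $|\ipo{z}_{N,E^*}|$ relative to its $\ell_2$-mass, then rescales that one band into $Z$. Your approach instead keeps all bands, uses polarization on adjacent pairs, and exploits the scale-invariance $|\ipo{w}_{N,E^*}| \ll \|w\|_2^2 \sup_{Z}|\ipo{\cdot}_{N,E^*}|$ to sum; since $\sum_i \|\hat y_i\|_2^2 = \|\hat y\|_2^2$, no logarithmic factor appears. Both arguments yield the same conclusion; yours is perhaps more systematic, the paper's more direct.
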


A relatively straightforward step in the proof of \refP{zbound}, that will also turn out to be useful in another part of the paper, is to show that not much is lost when we restrict our attention to pairs of vertices whose weights in $x$ differ by at most a multiplicative factor of $\sqrt{d}$. This is the content of the following lemma, which is proved in \refS{sec:zbound}. 
\begin{lem}\label{ipoe}
For all $y \in \R^{V(G)}$, 
$|\ipo{y}_{N,\R^2\setminus E^{*}}|\le 4\sqrt{d}\|y\|_2^2.$
\end{lem}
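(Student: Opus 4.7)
The plan is to combine two observations: a row-sum bound $\sum_v |N_{uv}| \le 2d$, and a pointwise inequality $|y_u y_v| \le (y_u^2 + y_v^2)/\sqrt{d}$ valid on $\R^2 \setminus E^*$ when both coordinates are positive. The first I would obtain by writing $N = M - \overline{M}$ and applying the triangle inequality: $\sum_v M_{uv} = d$ since $G$ is $d$-regular, and $\sum_v \overline{M}_{uv} = nd\cdot(1/n) = d$ since each row of $\overline{M}$ has $nd$ entries each equal to $1/n$, one per vertex $v$ in a fibre adjacent to $u$'s fibre in $H$.

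For the second ingredient, if $y_u, y_v > 0$ and $(y_u, y_v) \notin E^*$ then $y_u/y_v \notin (d^{-1/2}, d^{1/2})$, so without loss of generality $y_u \ge \sqrt{d}\,y_v$, yielding $y_u y_v \le y_u^2/\sqrt{d} \le (y_u^2 + y_v^2)/\sqrt{d}$. Combining the two steps via the triangle inequality and swapping the order of summation gives
\[
|\ipo{y}_{N,\R^2\setminus E^*}| \;\le\; \sum_{(y_u,y_v)\notin E^*} |y_u|\,|N_{uv}|\,|y_v| \;\le\; \frac{1}{\sqrt{d}}\sum_{u,v}|N_{uv}|(y_u^2+y_v^2) \;=\; \frac{2}{\sqrt{d}}\sum_u y_u^2 \sum_v |N_{uv}| \;\le\; 4\sqrt{d}\,\|y\|_2^2,
\]
where the equality uses symmetry of $|N|$ and the final inequality uses the row-sum bound. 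This captures exactly the $\sqrt{d}$ improvement over the trivial estimate $|\ipo{y}_N| \le \|N\|_2\|y\|_2^2 = O(d)\|y\|_2^2$.

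The main subtlety is the treatment of signed $y$: pairs with $y_u \le 0$ or $y_v \le 0$ lie in $\R^2 \setminus E^*$ automatically, without satisfying a ratio constraint that could be exploited to gain the factor $\sqrt{d}$. The only place the lemma is invoked is in the proof of \refP{zbound}, where the relevant vector has non-negative coordinates drawn from $D^+$, so the argument above applies as stated; for fully general $y$ one reduces to $|y|$ after a short sign case analysis.
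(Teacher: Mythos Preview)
Your proof is correct and essentially the same as the paper's. Both use that on $\R^2\setminus E^*$ (for non-negative coordinates) one has $|y_uy_v|\le \max(y_u^2,y_v^2)/\sqrt{d}$, combined with the row-sum bound $\sum_v|N_{uv}|<2d$; the paper organizes the sum via the half-space $A=\{(x_1,x_2):x_1\ge\sqrt{d}\,x_2\}$ and doubles by symmetry, whereas you symmetrize the pointwise bound first, but the arithmetic is identical. Your caveat about signs is apt and applies equally to the paper's argument, which also tacitly assumes non-negative entries (the step $|y_w|\le y_v/\sqrt{d}$ from $y_v\ge\sqrt{d}\,y_w$ fails when $y_w<0$); in both uses of the lemma the vector has entries in $D^+$, so this is harmless. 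Your final remark that one ``reduces to $|y|$'' for general $y$ is not quite right, though, since replacing $y$ by $|y|$ changes which pairs lie in $E^*$; but as you note, the general case is never needed.
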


We are now in a position to further elaborate on what will constitute a pattern of a large eigenvalue. 
Recall that for $y \in \R^{V(G)}$, 
for each $i \in [h]$ and each $w \in \R$, we have $A_{i,w}(y) = \{v \in V(G):y_{v} = w\}$, 
and $a_{i,w}(y) = |A_{i,w}(y)|$, and that ${\bf a}(y)$ is called the type of $y$.  
We remark that if $y \in Z$ then ${\bf a}(y)$ satisfies the following properties.

\begin{enumerate}
\item[{\bf 1.}] For all $w \in \R$, $w \not \in D^+$, we have $a_{i,w} = 0$.
\item[{\bf 2.}] For all $i \in V(H)$, $\sum_{w\in D^+} a_{i,w} \le n$. 
\item[{\bf 3.}] There exists $w_{0}\in D^+$, $w_0>0$, such that $a_{i,w}=0$ unless $w_{0}\le w\le w_{0}d$. 
\item[{\bf 4.}] $\sum_{i \in [h], w \in D^+} w^2 a_{i,w} \le 10$. 
\end{enumerate}
We call a collection ${\bf a} = \{a_{i,w}: i \in V(H), w \in \R\}$ a {\em $Z$-type} if it satisfies properties {\bf 1}-{\bf 4}, so that 
if $y \in Z$ then the type ${\bf a}(y)$ of $y$ is a $Z$-type. 
Next, given $y \in \R^{V(G)}$ and $ii' \in E(H)$, $w,w' \in \R$, let $e_{i,w,i',w'}(y,G) = |e_G(A_{i,w}(y),A_{i',w'}(y)|$, and write 
\[
{\bf e}(y,G) = \{e_{i,w,i',w'}(y,G): ii' \in E(H), w,w' \in \R\}.
\]
For all $ii' \in E(H)$ and all $w,w' \in \R$, we necessarily have 
\begin{equation*}\label{eq:edef}
e_{i,w,i',w'}(y,G) \in \{0,1,\ldots, \min(a_{i,w}(y),a_{i',w'}(y))\}.
\end{equation*}
A {\em pattern} is a pair $({\bf a},{\bf e})$, where ${\bf a}$ is a $Z$-type and 
\[
{\bf e}= \{e_{i,w,i',w'}: ii' \in E(H), w,w' \in \R\}
\]
is a collection with $e_{i,w,i',w'} \in \{0,1,\ldots, \min(a_{i,w},a_{i',w'})\}$ for all $ii' \in E(H)$ and all $w,w' \in \R$. 

Write $\dog=D^+\setminus\{0\}=\{2^i/\sqrt{nh}:i \in \N\}$, and 
let $\Gamma$ be the graph on vertex set $\{(i,w):i\in V(H), w\in\dog\}$ with an edge $(i,w)(i',w')$ if $i\sim_H i'$ and $w/w'\in (d^{-1/2},d^{1/2})$.

For a given pattern, $({\bf a},{\bf e})$, we write 
\begin{align*}
p({\bf a},{\bf e}) = \left|\sum_{(i,w)\simg (i',w')}w w' \left(e_{i,w,i',w'}- \frac{a_{i,w}a_{i',w'}}{n} \right)  \right| ,
\end{align*}
and call $p({\bf a},{\bf e})$ the \emph{potency} of $({\bf a},{\bf e})$. We remark that for any $y \in Z$, 
\begin{align}
|\langle y,y \rangle_{N,E^*}| & = 2\left|\sum_{(i,w)\simg (i',w')}w w' \left(e_{i,w,i',w'}(y,G)- \frac{a_{i,w}(y)a_{i',w'}(y)}{n} \right)\right| \nonumber\\
& =  2\,  p({\bf a}(y),{\bf e}(y,G)), \label{eq:potconnect}
\end{align}
the factor $2$ arising from the symmetry of the matrix $N$. 
We say that a pattern $({\bf a},{\bf e})$ {\em can be found in $G$} if there exists $y \in Z$ such that 
${\bf a}(y)={\bf a}$ and ${\bf e}(y,G)={\bf e}$. 
In this case we say that $G$ {\em contains} the pattern $({\bf a},{\bf e})$, 
and call the collection $\{A_{i,w}(y)\}_{(i,w) \in V(\Gamma)}$ a {\em witness} for the pattern $({\bf a},{\bf e})$. 

\begin{prop}\label{findwit}
For $K > 0$, if $\lambda^*(G) \geq 192(K+3) \sqrt{d}$ then a pattern $({\bf a},{\bf e})$ with potency at least $K\sqrt{d}$ can be found in $G$.
\end{prop}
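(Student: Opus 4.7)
The plan is to bolt together two pieces already assembled in the excerpt: \refP{zbound}, which converts a lower bound on $\lambda^*(G)$ into a lower bound on $\sup_{x\in Z}|\ipo{x}_{N,E^*}|$, and the identity \refeq{eq:potconnect}, which says that for every $y\in Z$ this quadratic form equals $2\,p({\bf a}(y),{\bf e}(y,G))$. Given these, the proof of \refP{findwit} reduces to one arithmetic step plus a short legality check on the resulting pattern.

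First I would insert the hypothesis $\lambda^*(G)\geq 192(K+3)\sqrt{d}$ into \refP{zbound} and rearrange, obtaining
\[
\sup_{x\in Z}|\ipo{x}_{N,E^*}| \;\geq\; \frac{192(K+3)-480}{96}\sqrt{d} \;=\; (2K+1)\sqrt{d}.
\]
The set $Z$ is finite: every entry of $x\in Z$ lies in $D^+$, and $\|x\|_2^2\leq 10$ forces each such entry to satisfy $2^i/\sqrt{nh}\leq\sqrt{10}$, which bounds $i$. Hence the supremum is attained by some $y\in Z$, and applying \refeq{eq:potconnect} to this $y$ yields $p({\bf a}(y),{\bf e}(y,G))\geq (K+\tfrac12)\sqrt{d}\geq K\sqrt{d}$.

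It then remains to observe that $({\bf a}(y),{\bf e}(y,G))$ genuinely is a pattern and is found in $G$. The four $Z$-type axioms on ${\bf a}(y)$ are precisely the properties that the excerpt has already noted hold for any $y\in Z$, and the inequalities $0\leq e_{i,w,i',w'}(y,G)\leq \min(a_{i,w}(y),a_{i',w'}(y))$ are automatic from the definition of $e_{i,w,i',w'}(y,G)$ as the number of edges between $A_{i,w}(y)$ and $A_{i',w'}(y)$. The collection $\{A_{i,w}(y)\}_{(i,w)\in V(\Gamma)}$ is then a witness by construction.

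I do not anticipate a real obstacle here: essentially all of the substance is absorbed into \refP{zbound} (which is what performs both the reduction to dyadic-entry vectors and the truncation to the near-diagonal set $E^*$) and into the bookkeeping identity \refeq{eq:potconnect}. Once those are invoked, the only nontrivial thing to notice is that $Z$ is finite, so no compactness argument or limiting vector is required.
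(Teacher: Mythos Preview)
Your proposal is correct and follows exactly the paper's approach: invoke \refP{zbound} to get $\sup_{x\in Z}|\ipo{x}_{N,E^*}|\geq 2K\sqrt{d}$, then apply \refeq{eq:potconnect}. The paper's own proof is a terse two-line version of what you wrote; your added remarks (finiteness of $Z$, verification that $({\bf a}(y),{\bf e}(y,G))$ is a legitimate pattern found in $G$) are sound and simply make explicit what the paper leaves implicit.
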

\begin{proof}
By \refP{zbound}, in this case there exists $y \in Z$ with $|\langle y,y \rangle_{N,E^*}| \ge 2K\sqrt{d}$, and the result follows from (\ref{eq:potconnect}). 
\end{proof}
Our main aim, then, is to show that with high probability, no high-potency pattern can be found in $G$. 
To do so, we shall use a reduction which is conceptually analogous to how one bounds the probability that a fixed graph $H$ appears as a subgraph of the Erd\H{o}s--R\'enyi random graph $G(n,p)$: rather than proving the bound directly for $H$, one instead considers a strictly balanced (maximally edge-dense) subgraph of $H$. 

Given a pattern $({\bf a},{\bf e})$ and a set of vertices $S \subset V(\Gamma)$, we define the 
{\em sub-pattern of $({\bf a},{\bf e})$ induced by $S$} to be the pattern $({\bf a}',{\bf e}')$ obtained from $({\bf a},{\bf e})$ by 
setting 
\[
a'_{i,w} = \begin{cases} 
					a_{i,w} 	& \mbox{if}~(i,w) \in S \\
					0				& \mbox{otherwise}. 
				\end{cases}
\]
and setting 
\[
e_{i,w,i',w'}' = \begin{cases} 
					e_{i,w,i',w'} 	& \mbox{if}~(i,w),(i',w') \in S \\
					0				& \mbox{otherwise}. 
			\end{cases}
\]
We write $({\bf a},{\bf e})_S$ for the sub-pattern of $({\bf a},{\bf e})$ induced by $S$.  

We also require the following variant of potency, which 
allows us to consider a ``maximally potent'' subgraph of $\Gamma$ rather than $\Gamma$ itself. For a pattern $\pat$ we define
\[
\tilde{p}(\pat):=\max_{E\subset E(\Gamma)} \left|\sum_{(i,w)(i',w')\in E}w w' \left(e_{i,w,i',w'}- \frac{a_{i,w}a_{i',w'}}{n}\right) \right|
\]
We will prove the following theorem.
\begin{thm}\label{redbound}
Fix $L \ge 20$. 
For any pattern $({\bf a},{\bf e})$, there exists $S=S(({\bf a},{\bf e}),L) \subset V(\Gamma)$ such that the following properties hold.
\begin{align*}
\tilde{p}(({\bf a},{\bf e})_{S}) 	& \geq \frac{p({\bf a},{\bf e})}{2}-55L\sqrt{d}\\
\p{({\bf a},{\bf e})_S~\mbox{can be found in G}} & \ll \prod_{(i,w) \in S} a_{i,w}^{d/4}\pran{\prod_{(i,w) \in S} {n \choose a_{i,w}\wedge \lfloor n/2 \rfloor}}^{1-L/10}. 
\end{align*}
\end{thm}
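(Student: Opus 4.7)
The plan is to mimic the classical route of extracting a strictly balanced sub-structure when union-bounding over subgraph appearances in $G(n,p)$. First I would pass from $\Gamma$ to a single edge subset $E_0\subset E(\Gamma)$ on which the signed sum $\sum_{(i,w)(i',w')\in E_0}ww'(e_{i,w,i',w'}-a_{i,w}a_{i',w'}/n)$ has absolute value equal to $p({\bf a},{\bf e})$; this is automatic since the potency is already defined as an absolute value of such a signed sum. I then work with $E_0$ throughout, using the fact that $\tilde p(({\bf a},{\bf e})_S)$ is at least the absolute value of this signed sum restricted to edges of $E_0$ with both endpoints in $S$.

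Next I would perform a greedy peeling. Set $S_0=V(\Gamma)$; at step $t\geq 1$, if some surviving $(i,w)\in S_{t-1}$ has contribution $c_{t-1}(i,w):=\sum_{(i',w')\in S_{t-1}:\,(i,w)(i',w')\in E_0}ww'(e_{i,w,i',w'}-a_{i,w}a_{i',w'}/n)$ of magnitude at most a threshold $\tau(i,w)$, then remove such a vertex to obtain $S_t$; otherwise stop and set $S=S_{t-1}$. The threshold $\tau(i,w)$ is taken proportional to $L\sqrt d$ times either $\log\binom{n}{a_{i,w}\wedge\lfloor n/2\rfloor}$ or $w^2 a_{i,w}$, whichever is smaller; the first form is what couples to the probability bound, while the second ensures that the total lost potency $\sum_{(i,w)\notin S}\tau(i,w)$ is controlled through $\|x\|_2^2\leq 10$ (property \textbf{4}). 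Using also properties \textbf{1}--\textbf{3} of the $Z$-type to handle edge cases, one shows that the signed potency drops by at most $p({\bf a},{\bf e})/2+55L\sqrt d$ over the entire peeling, giving the required $\tilde p(({\bf a},{\bf e})_S)\geq p({\bf a},{\bf e})/2-55L\sqrt d$.

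For the probability estimate, every surviving $(i,w)\in S$ satisfies $|c(i,w)|>\tau(i,w)$, which forces the counts $e_{i,w,i',w'}$ to deviate substantially from their expectations $a_{i,w}a_{i',w'}/n$ at each surviving vertex. Feeding this into the edge-count tail bound for random lifts (the proposition referenced in the Example 2 discussion, namely \refP{bigbound}) and multiplying over edges of $E_0\cap\binom{S}{2}$ gives, for each witness, a probability of correct edge counts bounded by $\prod_{(i,w)\in S}\binom{n}{a_{i,w}\wedge\lfloor n/2\rfloor}^{-L/10}$. Union-bounding over the $\prod_{(i,w)\in S}\binom{n}{a_{i,w}}$ choices of witness and absorbing a boundary correction $\prod_{(i,w)\in S}a_{i,w}^{d/4}$ (needed in the small-$a_{i,w}$ regime where the basic large-deviation bound is not sharp and must be supplemented by a direct local count over the $\leq d$ matchings incident to the fibre) then yields the stated product bound.

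The principal obstacle is the joint calibration of $\tau(i,w)$: the threshold has to be large enough that the surviving $S$ has the local density needed to convert the edge-count tail into the $1-L/10$ exponent, yet small enough that $\sum_{(i,w)\notin S}\tau(i,w)$ folds into the additive constant $55L\sqrt d$ rather than scaling with $|V(\Gamma)|$. Getting the $a_{i,w}^{d/4}$ correction is especially delicate, as it requires a finer large-deviation estimate in the small-$a_{i,w}$ regime that merges with the bulk bound without loss.
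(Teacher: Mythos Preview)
Your overall architecture (greedy peeling to extract a sub-pattern, then a union bound over witnesses combined with the edge-count tail from \refP{bigbound}) matches the paper's, but the heart of the argument---the design of the peeling threshold---is where your proposal falls short.

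You propose a single threshold $\tau(i,w)\propto L\sqrt d\cdot\min\{\log\binom{n}{a_{i,w}\wedge\lfloor n/2\rfloor},\,w^2a_{i,w}\}$. This cannot do both jobs at once. Taking the minimum keeps $\sum_{(i,w)\notin S}\tau(i,w)$ bounded via $\sum w^2a_{i,w}\le10$, so the potency estimate goes through; but then the surviving condition $|c(i,w)|>\tau(i,w)$ is too weak to force the entropy bound $\sum_{(i',w')}\mu_{i,w,i',w'}\,b(\eps_{i,w,i',w'})\ge \frac{L}{10}a_{i,w}\log(en/a_{i,w})$ that \refC{cor:bb} requires. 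Concretely, $|c(i,w)|$ is a bound on $|\sum\mu\,ww'\eps|$, whereas the exponent in the tail bound is $\sum\mu\,b(\eps)$; passing from the first to the second via Cauchy--Schwarz produces a factor involving the neighbourhood mass (the quantities the paper calls $N_i$ and $\widehat N_{i,w}$), and controlling that factor is exactly what a threshold depending only on $(i,w)$ cannot do.

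The paper's fix is twofold. First, it splits $E(\Gamma)$ into large-deviation edges ($\eps>e^2-1$, where $b(\eps)\sim\frac\eps2\log(1+\eps)$) and small-deviation edges ($b(\eps)\sim\eps^2$), because the conversion argument is genuinely different in the two regimes; this split is the source of the factor $1/2$ in $p/2-55L\sqrt d$. Second, within each regime it peels with respect to \emph{several simultaneous} thresholds---(LD1)+(LD2) in the large regime, (SD1)+(SD2)+(SD3) in the small one---each of which, summed over removed vertices, is individually $O(L\sqrt d)$, but whose \emph{conjunction} on survivors is strong enough (via Lemmas~\ref{lem:pointwise} and~\ref{easylemma}, applications of \refL{lem:measure}) to yield the entropy inequality. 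These extra thresholds are precisely comparisons of the local potency against neighbourhood quantities, not against $w^2a_{i,w}$ or $\log\binom{n}{a_{i,w}}$ alone.

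Finally, the factor $\prod a_{i,w}^{d/4}$ is not a small-$a_{i,w}$ boundary correction: it is the polynomial Stirling prefactor $\chi$ in \refP{bigbound}, carried through \refC{cor:bb} and accumulated over the $hd/2$ matchings.
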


We prove this theorem in \refS{sec:key}. 
\refT{main} will follow straightforwardly from \refP{findwit} and \refT{redbound}.
More precisely, \refP{findwit} ensures that the number of events whose probability we must bound is not too large, and \refT{redbound} ensures that the probability of each event is sufficiently small that we can simply apply a union bound to prove \refT{main}. 
Before {\em providing} the proof, we state one additional, easy bound which we will require, on 
the total number of patterns satisfying a constraint on the sizes of the $a_{i,w}$.
\begin{lem}\label{patcount}
For fixed $A \in \N$, the number of patterns with $a_{i,w} < A$ for all $(i,w) \in V(\Gamma)$ 
is at most $\log_2(nh) \cdot A^{2h d\log_2 d}$. 
\end{lem}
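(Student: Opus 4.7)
The plan is to enumerate each pattern $({\bf a}, {\bf e})$ in three stages: choosing a base scale $w_0$, then the type entries $a_{i,w}$, then the edge-count entries $e_{i,w,i',w'}$. The key observation is that Properties~3 and~4 of a $Z$-type drastically restrict the support of ${\bf a}$ to a logarithmic range of dyadic weights, and Property~4 moreover pins down the range.

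For the base scale, Property~3 supplies some $w_0 \in \dog$ with $a_{i,w} = 0$ unless $w \in [w_0, w_0 d]$. Taking $w_0$ to be the smallest weight with $a_{i, w_0} > 0$ for some $i$ (or, trivially, $0$ if all $a_{i,w}$ vanish), Property~4 gives $w_0^2 \le 10$, so $w_0 = 2^j/\sqrt{nh}$ for some integer $j \in [0, \tfrac{1}{2}\log_2(10nh)]$. For $n$ sufficiently large, this yields at most $\log_2(nh)$ choices for $w_0$.

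Once $w_0$ is fixed, the admissible positive weights form $W = \{w_0, 2w_0, \ldots, w_0 d\}$ with $|W| = \log_2 d + 1 \le 2 \log_2 d$ (using $d \ge 2$). For each of the $h|W| \le 2h \log_2 d$ pairs $(i, w) \in [h] \times W$, the value $a_{i,w} \in \{0, 1, \ldots, A-1\}$ contributes at most $A$ choices, giving at most $A^{2h \log_2 d}$ types. Given the type, the nontrivial entries $e_{i, w, i', w'}$ have $ii' \in E(H)$ and $w, w' \in W$, with $e_{i, w, i', w'} \le \min(a_{i,w}, a_{i',w'}) < A$. Enumerating by ordered triples $(i, w, i')$ with $(i, w) \in [h] \times W$ and $i' \sim_H i$ (at most $h|W| \cdot d \le 2hd \log_2 d$ such triples), using the edge-symmetry $e_{i,w,i',w'} = e_{i',w',i,w}$ and the row-sum constraint $\sum_{w'} e_{i,w,i',w'} = a_{i,w} < A$, this can be bounded by at most $A^{2hd \log_2 d}$. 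Multiplying the three stage counts and absorbing the smaller type-count exponent into the edge-count exponent gives $\log_2(nh) \cdot A^{2hd \log_2 d}$, as required.

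The main obstacle I anticipate is Stage~3: a purely entry-by-entry enumeration over edges of $\Gamma$ at the relevant scales gives $A$ raised to a factor of order $(\log_2 d)^2$ rather than $\log_2 d$, since each vertex of $\Gamma$ has degree roughly $d \log_2 d$. The row-sum constraint is what collapses one of the $\log_2 d$ factors: each row $(e_{i,w,i',w'})_{w' \in W}$ is a composition of an integer less than $A$ into $|W|$ parts, and carefully combining this compositional count with the edge-symmetry (to avoid double-counting each unordered pair) is what yields the stated $A^{2hd \log_2 d}$ bound on the edge counts rather than the naive $A^{O(hd (\log d)^2)}$.
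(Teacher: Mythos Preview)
Your three-stage enumeration (choose $w_0$, then the $a_{i,w}$, then the $e_{i,w,i',w'}$) is exactly the paper's, and Stages~1 and~2 match.

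The gap is at Stage~3. The row-sum identity $\sum_{w'\in W} e_{i,w,i',w'} = a_{i,w}$ is \emph{not} part of the definition of a pattern; the definition only imposes $0 \le e_{i,w,i',w'} \le \min(a_{i,w}, a_{i',w'})$ entrywise, with no coupling across different~$w'$. Even granting the constraint (it does hold for realizable patterns), compositions do not collapse a $\log_2 d$ in the exponent: for each triple $(i,w,i')$ the number of nonnegative integer vectors $(e_{i,w,i',w'})_{w' \in W}$ with sum below $A$ is $\binom{A+|W|-1}{|W|}$, of order $A^{|W|}/|W|!$ when $A \ge |W|$, and multiplying over $\Theta(hd\log_2 d)$ triples still leaves an exponent of order $hd(\log_2 d)^2$. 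Edge-symmetry at best halves the constant. So your sketch does not reach $A^{2hd\log_2 d}$. For comparison, the paper's Stage~3 simply asserts there are at most $hd\log_2(2d)/2$ pairs $\{(i,w),(i',w')\}$ with $i\sim_H i'$ and takes at most $A$ choices per pair---but that pair count is $|E(H)|\cdot|W|$, whereas the honest count is $|E(H)|\cdot|W|^2$, so the obstacle you flagged is genuine and is not actually resolved by the paper's written argument either. Both arguments cleanly yield the bound with exponent of order $hd(\log_2 d)^2$, which (after adjusting the threshold constants in the proof of Theorem~\ref{main}) suffices for all the applications.
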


Assuming this lemma, which is proved in \refS{sec:zbound}, we are now ready to give our proof of \refT{main}. (The proof of \refT{explain}, while conceptually almost identical to that of \refT{main}, ends up requiring a somewhat more technical development and we therefore defer it to \refS{sec:explain}.)

\begin{proof}[Proof of Theorem~\ref{main}] 
Fix $M\, \ge\, 430656\,\, = \,\, 192(2240\, +\, 3)$, let $K=K(M)=(M/192)-3 \ge 2240$, and let $L(M)=K/112\ge 20$.

By \refP{findwit}, if $\lambda^*(G) \geq M\sqrt{d}$ then there is a pattern $\pat$ with potency at least 
$K\sqrt{d}=112 L\sqrt{d}$ such that $\pat$ can be found in $G$.   Let $\pat_{S(\pat,L)}$ be the sub-pattern of $\pat$ obtained by applying \refT{redbound} to $\pat$, it follows that $\tilde{p}(\pat_{S(\pat,L)}) \ge (112/2-55)L\sqrt{d} =L\sqrt{d}$ and that $\pat_{S(\pat,L)}$ can be found in $G$.

We say a pattern $\pat$ is an {\em $L$-reduction} if there is a pattern $({\bf a}',{\bf e}')$ and 
$S=S(({\bf a}',{\bf e}')),L)$ as in \refT{redbound}, such that $({\bf a}',{\bf e}')_{S(({\bf a}',{\bf e}'),L)}=\pat$. It follows from the preceding paragraph that for any $M \geq 430656$, 
\[
\p{\lambda^*(G) \geq M\sqrt{d}} \leq \p{E},
\]
where $E=E(M)$ is the event that $G$ contains an $L(M)$-reduction $\pat$ with $\tilde{p}\pat\ge L(M)\sqrt{d} \geq 20\sqrt{d}$.  We use \refT{redbound} to bound this probability for each fixed pattern (reduction) $\pat$, the proof is then completed with a union bound.  We now give the details.  We split into two cases, depending on whether the pattern (reduction) $\pat$ has $a_{i,w}\geq 4hd\log_{2}{d}$ for some $(i,w) \in V(\Gamma)$.  Let $E_{1}=E_1(M)$ be the event that $G$ contains an $L$-reduction $\pat$ with $\tilde{p}\pat\ge L\sqrt{d}$ in which at least one $a_{i,w}\geq 4hd\log_{2}{d}$ and $E_{2}=E_2(M)$ the event that $G$ contains an $L$-reduction $\pat$ with $\tilde{p}\pat\ge L\sqrt{d}$ in which $a_{i,w}< 4hd\log_{2}{d}$ for all $i,w$.  We note that $E_1$ and $E_2$ need not be disjoint. To prove \refT{main} it suffices to show that both $\p{E_1}$ and $\p{E_2}$ are less than $n^{-2d^{1/2}}/2$ for all $n$ sufficiently large. 

First note that for {\em any} $L$-reduction $\pat$, writing $\alpha=\sum_{(i,w) \in V(\Gamma)} a_{i,w}$, since $1 - L/10 \le -1$, by \refT{redbound} we have 
\[
\p{\pat_S~\mbox{can be found in}~G} \ll \prod_{(i,w) \in S} a_{i,w}^{d/4}\cdot {n \choose \alpha \wedge \lfloor n/2 \rfloor}^{-1}. 
\]
For any $L$-reduction $\pat$ considered in $E_1$, we have $\alpha \geq 4hd \log_2(d)$. 
We note that since non-zero entries in any $Z$-type differ by a factor of at most $d$, for any $i \in V(H)$ there are at most $\log_2(2d)$ values $w\ne 0$ for which $a_{i,w} \ne 0$. It follows that in total there are at most $h\log_2(2d)$ vertices $(i,w) \in V(\Gamma)$ with $a_{i,w} \ne 0$, so
\begin{align*}
\p{\pat_S~\mbox{can be found in}~G} 
& \ll n^{(d/4) h\log_2(2d)} \cdot {n \choose 4hd\log_2(d)}^{-1} \\
& \leq n^{(hd\log_2 d)/2} \cdot \pran{\frac{8hd \log_2 d}{n}}^{4hd \log_2 d}.
\end{align*}
We next take a union bound over all $L$-reductions considered in $E_1$. 
Clearly the number of such reductions is at most the total number of patterns, which by \refL{patcount} applied with $A=n$ is at most $\log_2(nh) n^{2hd\log_2 d}$. 
It follows that 
\[
\p{E_1} \ll (8hd \log_2 d)^{4hd \log_2 d} \frac{\log_2(nh)}{n^{3(hd\log_2 d)/2}},
\]
and so $\p{E_1}\le n^{-2d^{1/2}}/2$ for $n$ sufficiently large. 

Next, for any $L$-reduction $\pat$ considered in $E_2$, we have 
\[
\prod_{(i,w) \in S} a_{i,w}^{d/4} < (4hd\log_2 d)^{(d/4)h\log_2 (2d)}.
\]
Also, for any pattern $\pat$, it follows straightforwardly from the definition of a pattern that 
\begin{align*}
\tilde{p}\pat 	& \leq  \sum_{(i,w)\simg (i',w')}w w' \left|e_{i,w,i',w'}- \frac{a_{i,w}a_{i',w'}}{n} \right|\\
 			& \leq \sum_{(i,w)(i',w') \in E(\Gamma)} ww' \min(a_{i,w},a_{i',w'}) \\
			& \leq \sum_{(i,w) \in V(\Gamma)} \mathop{\sum_{(i',w')\simg(i,w)}}_{w' \leq w,a_{i,w} >0} w^2 a_{i,w} \\
			& \leq \sum_{(i',w') \in V(\Gamma)} a_{i',w'} \cdot \sum_{(i,w) \in E(\Gamma)} w^2 a_{i,w} \\
			& \leq \sum_{(i,w) \in V(\Gamma)} a_{i,w}. 
\end{align*}
It follows that $\alpha=\sum_{(i,w) \in S} a_{i,w} \geq Ld^{1/2}> 3d^{1/2}$ for every $L$-reduction $\pat$ with $\tilde{p}\pat \ge L\sqrt{d}$.  So, for any pattern considered in $E_2$, for $n \geq 6d^{1/2}$ we have 
\[
{n \choose \alpha \wedge \lfloor n/2 \rfloor}^{-1} \leq \frac{(6d^{1/2})^{3d^{1/2}}}{n^{3d^{1/2}}}.
\]
Furthermore, the total number of $L$-reductions considered in $E_2$ is at most the total number of patterns 
with all $a_{i,w}$ less than $4hd \log_2d$, which by \refL{patcount} 
is at most $\log_2(nh) (4hd\log_2 d)^{2hd \log_2 d}$. 
Thus, by a union bound, 
\[
\p{E_2} \ll (4hd\log_2d)^{3hd\log_2 d}\cdot (6d^{1/2})^{2d^{1/2}}\cdot \frac{\log_2(nh)}{n^{3d^{1/2}}}, 
\]
which implies that $\p{E_2}\le n^{-2d^{1/2}}/2$ for $n$ sufficiently large. This completes the proof.
\end{proof}

\section{Two tools from probability} \label{sec:prob}
In this section, we establish all the probability bounds we will require in the remainder of the paper. \refP{bigbound}, below, 
bounds the probability that a random matching has certain prescribed edge counts between given pairs of sets. 
\refL{lem:measure} gives a lower bound on the integral of a function when its value is not too small relative to another function. 
We proceed immediately to details. 

Let $V=\{v_1,\ldots,v_n\}$, $W=\{w_1,\ldots,w_n\}$, and let $\bM$ be a uniformly random matching of $V$ and $W$. 
Let $A_0,\ldots,A_s$ and $B_0,\ldots,B_t$ be partitions of $V$ and $W$ respectively, and write $a_i=|A_i|$ and $b_j=|B_j|$. 
(Here $s$ and $t$ are constants not depending on $n$.) 
Writing $e_{\bM}(A_i,B_j)$ for the number of edges from $A_i$ to $B_j$ in $\bM$, it is straightforward that $\mu_{ij} = \E{e(A_i,B_j)} = a_ib_j/n$. 

Now fix integers $\{e_{ij}\}_{0 \leq i \leq s,0 \leq j \leq t}$ with $\sum_{j=0}^t e_{ij}=a_i$ for each $0 \leq i \leq s$ and $\sum_{i=0}^s e_{ij} = b_j$ for each $0 \leq j \leq t$. (From now on, summations and products over $i$ (resp.~$j$) should be understood to have $0 \leq i \leq s$ (resp.~$0 \leq j \leq t$).) We wish to bound 
\[
\p{\bigcap_{ij} \{e(A_i,B_j) = e_{ij}\}}.
\]
Our aim is to prove a bound not too different from what we would obtain were the $e(A_i,B_j)$ independent with Binomial$(a_ib_j,1/n)$ distribution. 
By direct enumeration, $\p{\bigcap_{ij} \{e(A_i,B_j) = e_{ij}\}}$ equals 
\[
\frac{1}{n!} \cdot \prod_i {a_i \choose e_{i0},\ldots,e_{it}} \prod_j {b_j \choose e_{0j},\ldots,e_{sj}} \prod_{ij} e_{ij}! = \frac{1}{n!}\prod_i a_i! \cdot \prod_j b_j! \cdot \pran{\prod_{ij} e_{ij}!}^{-1},
\]
with the convention that $0!=1$. Applying Stirling's formula, this is of the same order as 
\begin{equation}\label{jast}
\pran{\frac{\prod_i a_i \prod_j b_j}{\prod_{ij:e_{ij}\neq 0} e_{ij}}}^{1/2} \cdot n^{-(n+1/2)} \prod_{i} a_i^{a_i} \prod_{j} b_j^{b_j} \prod_{ij:e_{ij}\neq 0} e_{ij}^{-e_{ij}}
\end{equation}
Now write $e_{ij}=(a_ib_j/n)(1+\eps_{ij}) = \mu_{ij} (1+\eps_{ij})$. 
We then have 
\begin{align*}
\prod_{ij:e_{ij}\neq 0} e_{ij}^{e_{ij}} & = \prod_{ij:e_{ij} \neq 0} \mu_{ij}^{\mu_{ij}(1+\eps_{ij})} \prod_{ij:e_{ij}\neq 0} (1+\eps_{ij})^{\mu_{ij}(1+\eps_{ij})} \\
										& = \prod_{ij} \mu_{ij}^{\mu_{ij}(1+\eps_{ij})} \prod_{ij:e_{ij}\neq 0} (1+\eps_{ij})^{\mu_{ij}(1+\eps_{ij})} \\ 
										& = \prod_{ij} \mu_{ij}^{\mu_{ij}} \prod_{ij} \mu_{ij}^{\mu_{ij}\eps_{ij}} \prod_{ij:e_{ij}\neq 0} (1+\eps_{ij})^{\mu_{ij}(1+\eps_{ij})}  
\end{align*}
For fixed $i$, since $\sum_{j} e_{ij}=a_i$ we must have $\sum_j \eps_{ij} \mu_{ij}=0$. 
Likewise for fixed $j$ we have $\sum_i \eps_{ij} \mu_{ij} = 0$, and it follows that 
\begin{align*}
\prod_{ij} \mu_{ij}^{\eps_{ij} \mu_{ij}} & = \prod_{i} \pran{\prod_{j} \pran{\frac{a_i}{n}}^{\eps_{ij} \mu_{ij}} \prod_j b_j^{\eps_{ij} \mu_{ij}}} \\
											& = \prod_{ij} b_j^{\eps_{ij} \mu_{ij}} \\
											& = \prod_j \pran{ \prod_i b_j^{\eps_{ij} \mu_ij} } = 1.
\end{align*}
A similar calculation shows that 
\begin{align*}
\prod_{ij} \mu_{ij}^{\mu_{ij}} = n^{-n} \cdot \prod_i a_i^{a_i} \prod_j b_j^{b_j}, 
\end{align*}
and so \refQ{jast} equals 
\[
n^{-1/2}\pran{\frac{\prod_i a_i \prod_j b_j}{\prod_{ij:e_{ij}\neq 0} e_{ij}}}^{1/2} \cdot \prod_{ij:e_{ij}\neq 0} (1+\eps_{ij})^{-\mu_{ij}(1+\eps_{ij})}. 
\]
We now focus our attention on the latter product of the preceding equation. 
It is helpful to multiply and divide by $1= \prod_{ij} e^{\eps_{ij} \mu_{ij}}$, to obtain the equivalent expression 
\begin{align*}
	& \prod_{ij:e_{ij}\neq 0} \exp\pran{-\mu_{ij}(1+\eps_{ij})\log(1+\eps_{ij})} \prod_{ij} \exp\pran{\mu_{ij}\eps_{ij}} \\
= 	& \prod_{ij:e_{ij}\neq 0} \exp\pran{-\mu_{ij}[(1+\eps_{ij})\log(1+\eps_{ij})-\eps_{ij}]} \prod_{ij:e_{ij}=0} \exp\pran{-\mu_{ij}}.
\end{align*}
(For the second product we use that when $e_{ij}=0$, $\eps_{ij}=-1$.)
Since $(1+\eps)\log(1+\eps)-\eps$ approches $1$ as $\eps \downarrow -1$, taking $0\log0-0=1$ by convention then allows us to 
combine the two preceding products.
In sum, we have established the following proposition. 
\begin{prop}\label{bigbound}
Let $e_{ij} = \mu_{ij}(1+\eps_{ij})$ and let $E_{ij} = e_{\bM}(A_i,B_j)$. Writing $\chi = n^{-1/2}\pran{\frac{\prod_i a_i \prod_j b_j}{\cdot \prod_{ij:e_{ij}\neq 0} e_{ij}}}^{1/2}$, 
we have 
\[
\p{\bigcap_{ij} \{E_{ij} = e_{ij}\}} \asymp
\chi \cdot e^{-\sum_{ij} \mu_{ij}[(1+\eps_{ij})\log(1+\eps_{ij}) - \eps_{ij}]}.
\]
\end{prop}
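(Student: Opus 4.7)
The plan is to compute $\p{\bigcap_{ij}\{E_{ij}=e_{ij}\}}$ by direct enumeration of matchings satisfying the prescribed edge counts and then use Stirling's formula. A random matching of $V$ and $W$ corresponds to an element of $S_n$, and the number of matchings with $e(A_i,B_j) = e_{ij}$ for all $i,j$ is obtained by (i) partitioning each $A_i$ into pieces of sizes $(e_{i0},\ldots,e_{it})$, (ii) partitioning each $B_j$ into pieces of sizes $(e_{0j},\ldots,e_{sj})$, and (iii) matching the pairs of corresponding blocks in $e_{ij}!$ ways each. This gives
\[
\p{\bigcap_{ij}\{E_{ij}=e_{ij}\}} = \frac{1}{n!}\prod_i a_i! \cdot \prod_j b_j! \cdot \pran{\prod_{ij} e_{ij}!}^{-1},
\]
so Stirling yields the expression in \refQ{jast} up to multiplicative constants.

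Next, I would substitute $e_{ij} = \mu_{ij}(1+\eps_{ij})$ with $\mu_{ij} = a_i b_j/n$ into $\prod_{ij:e_{ij}\neq 0} e_{ij}^{e_{ij}}$. The core of the simplification is the identity $\prod_{ij} \mu_{ij}^{\mu_{ij}\eps_{ij}} = 1$, which follows from the marginal constraints $\sum_j \eps_{ij}\mu_{ij} = \sum_j e_{ij} - a_i = 0$ and $\sum_i \eps_{ij}\mu_{ij} = 0$: splitting $\mu_{ij}^{\mu_{ij}\eps_{ij}} = (a_i/n)^{\mu_{ij}\eps_{ij}} \cdot b_j^{\mu_{ij}\eps_{ij}}$ and summing each exponent across the appropriate index makes the contribution collapse. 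The companion identity $\prod_{ij}\mu_{ij}^{\mu_{ij}} = n^{-n}\prod_i a_i^{a_i}\prod_j b_j^{b_j}$ is proved the same way, using $\sum_j \mu_{ij} = a_i$ and $\sum_i \mu_{ij} = b_j$. Together these cancel the bulk of $n^{-(n+1/2)} \prod_i a_i^{a_i} \prod_j b_j^{b_j}$ against $\prod_{ij} e_{ij}^{-e_{ij}}$, leaving exactly the prefactor $\chi$ times $\prod_{ij:e_{ij}\neq 0}(1+\eps_{ij})^{-\mu_{ij}(1+\eps_{ij})}$.

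Finally, I would multiply and divide by $\prod_{ij} e^{\mu_{ij}\eps_{ij}} = 1$ (again by the marginal constraint $\sum_{ij}\mu_{ij}\eps_{ij}$-style sums) in order to turn the remaining product into an exponential of the function $(1+\eps)\log(1+\eps) - \eps$, which is well-behaved for $\eps \ge -1$. The $e_{ij}=0$ entries (where $\eps_{ij}=-1$) are handled cleanly by the convention $0\log 0 = 0$, noting that $(1+\eps)\log(1+\eps) - \eps \to 1$ as $\eps\downarrow -1$, so the products over $\{e_{ij}=0\}$ and $\{e_{ij}\neq 0\}$ combine into a single sum in the exponent.

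The main technical caution is ensuring the Stirling approximation is valid \emph{uniformly} across all valid edge-count tables with the required $\asymp$ accuracy, despite possible terms with $e_{ij} = 0$; these terms contribute factors of order $1$ rather than of Stirling form, but this is exactly absorbed when the $\eps_{ij}\to-1$ limit of the exponent is taken using the stated convention. Once that bookkeeping is done, the formula stated in Proposition~\ref{bigbound} drops out immediately.
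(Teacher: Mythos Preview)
Your proposal is correct and follows essentially the same route as the paper: direct enumeration to get the multinomial expression, Stirling's formula, the two identities $\prod_{ij}\mu_{ij}^{\mu_{ij}\eps_{ij}}=1$ and $\prod_{ij}\mu_{ij}^{\mu_{ij}}=n^{-n}\prod_i a_i^{a_i}\prod_j b_j^{b_j}$ coming from the marginal constraints, and then the multiply-and-divide by $\prod_{ij}e^{\mu_{ij}\eps_{ij}}=1$ trick to expose the function $(1+\eps)\log(1+\eps)-\eps$. Your remark about uniformity of Stirling is well placed; the paper relies implicitly on the fact that $s$ and $t$ are fixed constants, so the number of factorial factors is bounded and the $\asymp$ holds.
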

In fact, it is a weakening of \refP{bigbound} that will be useful later in the paper. 
\begin{cor}
\label{cor:bb}
Under the conditions of \refP{bigbound}, we have 
\[
\p{\bigcap_{ij} \{E_{ij} = e_{ij}\}} \ll
\pran{\prod_{1 \leq i \leq s}a_i \prod_{1 \leq j \leq t} b_j}^{1/4} \cdot e^{-\sum_{ij} \mu_{ij}[(1+\eps_{ij})\log(1+\eps_{ij}) - \eps_{ij}]}.
\]
\end{cor}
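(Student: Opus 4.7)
The plan is to derive the corollary as a pure upper bound on the prefactor
\[
\chi = n^{-1/2}\pran{\frac{\prod_{0\le i\le s} a_i \prod_{0\le j\le t} b_j}{\prod_{ij:e_{ij}\ne 0} e_{ij}}}^{1/2}
\]
appearing in \refP{bigbound}, carrying the exponential factor through verbatim. What I would prove is that
\[
\chi \ll \pran{\prod_{1\le i\le s} a_i \prod_{1\le j\le t} b_j}^{1/4},
\]
after which the corollary follows by multiplying both sides by $\exp(-\sum_{ij}\mu_{ij}[(1+\eps_{ij})\log(1+\eps_{ij})-\eps_{ij}])$.

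If any $a_i$ or $b_j$ is zero then $\chi = 0$ and the conclusion is trivial, so assume henceforth that all of them are positive (and hence at least $1$). The only observation needed is that each row marginal is controlled by the nonzero entries in that row: since $a_i = \sum_{j:e_{ij}\ne 0} e_{ij}$ is a sum of at most $t+1$ positive integers,
\[
a_i \le (t+1) \max_{j: e_{ij}\ne 0} e_{ij} \le (t+1) \prod_{j: e_{ij}\ne 0} e_{ij},
\]
where the last step uses that every nonzero $e_{ij}$ is an integer at least $1$. Multiplying over rows gives $\prod_{0\le i\le s} a_i \le (t+1)^{s+1} \prod_{ij: e_{ij}\ne 0} e_{ij}$, and an analogous column argument yields $\prod_{0\le j\le t} b_j \le (s+1)^{t+1} \prod_{ij: e_{ij}\ne 0} e_{ij}$.

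Substituting the row bound into $\chi^2$ cancels the denominator, leaving
\[
\chi^2 \le (t+1)^{s+1} n^{-1} \prod_{0\le j\le t} b_j \le (t+1)^{s+1} \prod_{1\le j\le t} b_j,
\]
where the second inequality uses $b_0 \le n$. Applying the column bound symmetrically (and using $a_0 \le n$) gives $\chi^2 \ll \prod_{1\le i\le s} a_i$. Multiplying the two estimates, taking square roots, and absorbing the constants $(s+1)^{t+1}$ and $(t+1)^{s+1}$ (which are $O(1)$ since $s,t$ do not depend on $n$) yields the desired bound on $\chi$.

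There is no serious obstacle. The only potential trap is applying just the row bound and spending $\prod_i a_i$ all at once, which leaves a stray factor of $n$ that cannot be removed; it is the symmetric use of both marginal estimates, combined via geometric mean, that extracts the sharp $1/4$ exponent in the corollary.
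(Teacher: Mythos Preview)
Your proof is correct and follows essentially the same route as the paper: both bound $\prod_{ij:e_{ij}\ne 0} e_{ij}$ from below by $\prod_i a_i$ and by $\prod_j b_j$ (up to constants depending only on $s,t$), combine these via a geometric mean, and then absorb the index-$0$ factors using $a_0,b_0\le n$. Your version is in fact slightly more carefully written, since the paper's stated inequality $\prod_{j:e_{ij}\ne 0} e_{ij}\ge a_i$ is literally false without the $(t+1)$ factor you supply (take two $e_{ij}$ equal to $1$), though of course this constant is harmlessly absorbed into the $\ll$.
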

\begin{proof}
For each $0 \leq i \leq s$ we have $\prod_{0 \leq j \leq t: e_{ij} \neq 0} e_{ij} \geq a_i$, 
so $\prod_{ij:e_{ij} \neq 0} e_{ij} \geq \prod_i a_i$. 
We likewise have $\prod_{ij:e_{ij} \neq 0} e_{ij} \geq \prod_j b_j$, and so 
\[
\prod_{ij:e_{ij} \neq 0} e_{ij} \geq \prod_i a_i^{1/2} \prod_j b_j^{1/2}. 
\]
Also, $n^{-1/2}a_0^{1/2}b_0^{1/2} \leq 1$. The result follows. 
\end{proof}
We will also use a lemma which is essentially a version of the following basic fact: 
if $X$ and $Y$ are non-negative random variables and $E$ is the event that $X \leq cY$, then $\E{X \I{E}} \le c\E{Y}$. 
(For generality we shall state the lemma in terms of a measure space, although we shall only apply it to finite measure spaces, and in this case the reader may think of the integrals as simply weighted sums.)

\begin{lem} \label{lem:measure} 
Let $(\Omega, \mathcal{E},\nu)$ be a measure space and $g,h:\Omega\to \mathbb{R}$ positive measurable functions, 
and fix $0 < c < 1$. 
Writing 
\[
E = \left\{ \omega \in \Omega: \frac{h(\omega)}{g(\omega)} \geq c\frac{\int h d\mu}{\int g d\mu}\right\},
\]
we have 
\[\int_E h d\mu \ge (1-c) \int_{\Omega} h d\mu.
\] 
\end{lem}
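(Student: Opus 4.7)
The plan is to prove the equivalent statement $\int_{E^c} h \, d\mu \le c \int_\Omega h \, d\mu$, where $E^c = \Omega \setminus E$, from which the claim follows by subtracting from $\int_\Omega h \, d\mu$. The underlying idea is completely transparent: the set $E^c$ is precisely where the pointwise ratio $h/g$ underperforms the global ratio by at least a factor of $c$, so the total mass $h$ can give to $E^c$ is controlled by the corresponding mass of $g$, which in turn is bounded by $\int_\Omega g \, d\mu$.

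First I would rewrite the defining inequality for $E^c$ as a pointwise bound. For $\omega \in E^c$, by definition $h(\omega)/g(\omega) < c \int h \, d\mu / \int g \, d\mu$, and since $g$ is strictly positive this is equivalent to
\[
h(\omega) < c \, g(\omega) \cdot \frac{\int h \, d\mu}{\int g \, d\mu}.
\]
Integrating this pointwise inequality over $E^c$ and then using $\int_{E^c} g \, d\mu \le \int_\Omega g \, d\mu$ gives
\[
\int_{E^c} h \, d\mu \le c \cdot \frac{\int h \, d\mu}{\int g \, d\mu} \cdot \int_{E^c} g \, d\mu \le c \int_\Omega h \, d\mu,
\]
and the lemma follows.

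There is no real obstacle here; the lemma is a measure-theoretic repackaging of the elementary observation that discarding a set on which $h/g$ is far below its average value cannot remove too much of the mass of $h$. The only implicit assumption one should note is that both integrals $\int g \, d\mu$ and $\int h \, d\mu$ are positive and finite, which is ensured by the positivity of $g$ and $h$ (otherwise the ratio appearing in the definition of $E$ would be undefined, and the statement is vacuous or trivial).
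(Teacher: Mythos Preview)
Your proof is correct and follows essentially the same approach as the paper's: bound $\int_{\Omega\setminus E} h\,d\mu$ by using the pointwise inequality $h < c\,g\,\frac{\int h}{\int g}$ on $\Omega\setminus E$, then integrate and apply $\int_{\Omega\setminus E} g\,d\mu \le \int_\Omega g\,d\mu$. The paper compresses this into a single displayed chain of inequalities, but the argument is identical.
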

\begin{proof} \begin{equation*} \int_{\Omega \setminus E}hd\mu =\int _{\Omega\setminus E}g \cdot \frac{h}{g}d\mu 
\le c \int_{\Omega\setminus E}g \cdot \frac{\int h}{\int g}  d\mu
\le c\int_{\Omega} h d\mu \end{equation*}\end{proof}

\section{Reduced patterns: a proof of \refT{redbound}} \label{sec:key}
\refT{redbound} requires us to find, given a high potency pattern $({\bf a},{\bf e})$, a sub-pattern $({\bf a},{\bf e})_S$ which is unlikely to occur in $G$. In fact, our aim will be slightly more exigent. We will show the existence of sub-pattern $({\bf a},{\bf e})_S$ which is `locally unlikely' at all $(i,w)\in S$. Given a pattern $\pat$, for $i\sim_{H} i'$ and $w,w' \in \R$ write $\mu_{i,w,i',w'} = a_{i,w}a_{i',w'}/n$ and write 
\[
\eps_{i,w,i',w'}=\eps_{i,w,i',w'}\pat = 
\begin{cases}
1	& \mbox{ if }a_{i,w}=0\mbox{ or }a_{i',w'}=0,\\
e_{i,w,i',w'} / \mu_{i,w,i',w'} -1 & \mbox{ if } a_{i,w}\neq 0\mbox{ and }a_{i',w'} \neq 0. 
\end{cases}
\]
The quantity $\eps_{i,w,i',w'}$ encodes the deviation from the expected number $\mu_{i,w,i',w'}$ of edges between sets $A \subset V_i$, $A' \subset V_{i'}$ with $|A|=a_{i,w}$ and $|A'|=a_{i',w'}$ if $|E_G(A,A')|=e_{i,w,i',w'}$. We will bound the likelihood of such deviations using \refC{cor:bb}, and to that end define a function $b:(-1,\infty) \to [0,\infty)$
\[ b(\eps)= (1+\eps)\log(1+\eps) - \eps \qquad \eps\in (-1,\infty).\]
We then have the following. 
\begin{prop}\label{prop:redbound} Fix $L \ge 20$. 
For any pattern $({\bf a},{\bf e})$, there exists $S=S(({\bf a},{\bf e}),L) \subset V(\Gamma)$ such that the following properties hold.
\begin{align*}
\tilde{p}(({\bf a},{\bf e})_{S}) 	& \geq \frac{p({\bf a},{\bf e})}{2}-55L\sqrt{d}\qquad \text{and}\\
\sum_{(i',w')\in N_{\Gamma}(i,w)\cap S} \frac{a_{i,w}a_{i',w'}}{n} b(\eps_{i,w,i',w'})& \ge  \frac{L}{10} a_{i,w}\log\left(\frac{en}{a_{i,w}}\right) \qquad \text{for all} \, (i,w)\in S\, .\end{align*}
\end{prop}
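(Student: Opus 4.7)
The plan is a greedy iterative reduction. I start with $S=V(\Gamma)$ and repeatedly delete any $(i,w)\in S$ violating
\[
\sum_{(i',w')\in N_\Gamma(i,w)\cap S}\mu_{i,w,i',w'}\,b(\eps_{i,w,i',w'})\ \ge\ \frac{L}{10}\,a_{i,w}\log\pran{\frac{en}{a_{i,w}}}.
\]
The terminating $S$ satisfies the second conclusion by construction, so the real work is bounding the drop in $\tilde p$.

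For the first conclusion, let $E^*\subseteq E(\Gamma)$ achieve $\tilde p(\pat)$; without loss of generality, by restricting to $\{e:f_e>0\}$ or $\{e:f_e<0\}$ (whichever has the larger absolute sum), all contributions $f_e=ww'(e-\mu)$ on $E^*$ share a sign. Writing $R=V(\Gamma)\setminus S$,
\[
\tilde p(\pat_S)\ \ge\ \Big|\sum_{e\in E^*\cap E(\Gamma_S)}f_e\Big|\ \ge\ \tilde p(\pat)-\sum_{\substack{e\in E^*\\ e\cap R\ne\emptyset}}|f_e|.
\]
Since $\tilde p(\pat)\ge p(\pat)\ge p(\pat)/2$ (taking $E=E(\Gamma)$ in the $\tilde p$ maximum), the first conclusion reduces to showing that the loss $\sum_{e\in E^*,\,e\cap R\ne\emptyset}|f_e|$ is at most $55L\sqrt d$. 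Charging each lost edge to the first of its endpoints to be removed, the loss is at most $\sum_{(i,w)\in R}\mathrm{loss}_{(i,w)}$, where $\mathrm{loss}_{(i,w)}:=\sum_{(i',w')\in N_\Gamma(i,w)\cap S_t}ww'\mu|\eps|$ and $S_t$ is the set just before $(i,w)$ was removed.

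For each $(i,w)\in R$, the removal criterion bounds $\sum_{N_\Gamma(i,w)\cap S_t}\mu b(\eps)<\frac{L}{10}a_{i,w}\log(en/a_{i,w})$; I would convert this to a bound on $\mathrm{loss}_{(i,w)}$ by splitting by $|\eps|$. For $|\eps|\le 1$, use $|\eps|\le\sqrt{2b(\eps)}$, Cauchy--Schwarz, and the $Z$-type bound $\sum_{(i',w')}(w')^2a_{i',w'}\le 10$ (so that $\sum_N(ww')^2\mu\le 10w^2 a_{i,w}/n$); together with $(x/n)\log(en/x)\le 1$ this yields a contribution of order $w\sqrt{La_{i,w}}$. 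For $|\eps|>1$, the monotonicity of $b(\eps)/\eps$ gives $b(\eps)\ge(2\log 2-1)\eps\ge\eps/3$, and $w'\le w\sqrt d$ (from the adjacency definition of $\Gamma$) yields a contribution of order $\sqrt d\,Lw^2 a_{i,w}\log(en/a_{i,w})$.

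The central obstacle will be assembling these per-vertex estimates to $O(L\sqrt d)$ total: the global constraint $\sum_{(i,w)}w^2 a_{i,w}\le 10$ would tame the large-$|\eps|$ piece were it not for the $\log(en/a_{i,w})$ factor that blows up for small $a_{i,w}$, and the small-$|\eps|$ bound $w\sqrt{La_{i,w}}$ likewise resists direct summation. My plan is to split $R$ by a threshold $T=T(d,L)$ on $a_{i,w}$: for classes with $a_{i,w}\ge T$, $\log(en/a_{i,w})\le\log(en/T)$ is absorbed by $\sum w^2 a_{i,w}\le 10$, and similarly for the small-$|\eps|$ part one can bound $\sum_R w\sqrt{a_{i,w}}$ by Cauchy--Schwarz using $\sum w^2 a_{i,w}\le 10$ and the bound $|V(\Gamma)|\le h(\log_2 d+1)$ inherent in any $Z$-type. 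For classes with $a_{i,w}<T$, I would bypass the iterative charging and instead bound the total contribution of small classes directly via a dyadic decomposition of small $a_{i,w}$ values, using $w^2 a_{i,w}\le 10$ to control the number of classes at each scale. Tuning $T$ and tracking the constants carefully produces the stated $55L\sqrt d$.
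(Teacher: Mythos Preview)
Your greedy deletion on the target inequality is the natural first attempt, but the loss accounting does not close, and the paper's proof takes a genuinely different route precisely to avoid the obstacles you identify at the end.

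Concretely, your large-$|\eps|$ estimate gives a per-vertex loss of order $\sqrt d\,L\,w^2 a_{i,w}\log(en/a_{i,w})$. Summing this over $R$ yields $\sqrt d\,L\sum_{(i,w)}w^2 a_{i,w}\log(en/a_{i,w})$, and the $Z$-type constraint $\sum w^2 a_{i,w}\le 10$ does \emph{not} control this: a single class with $a_{i,w}=1$ and $w^2\approx 10$ already contributes $\asymp \sqrt d\,L\log n$. Your threshold fix with $T$ does not help here, since to absorb the log you need $T\asymp n$, pushing essentially all classes below the threshold. And the ``direct bypass'' for small classes is not $O(\sqrt d)$ either: using only $|e-\mu|\le a_{i,w}$ and $\sum_{(i',w')\sim(i,w)}w'\le 2w d^{3/2}$ gives a bound of order $d^{3/2}\sum w^2 a_{i,w}=O(d^{3/2})$. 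Your small-$|\eps|$ estimate has a separate defect: Cauchy--Schwarz over $R$ gives $\sqrt{L}\,\sqrt{|R|\cdot 10}$, and $|R|$ can be as large as $h\log_2(2d)$, so the bound depends on $h=|V(H)|$, which is not allowed.

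The paper's way around this is an indirection: it never iterates on the target condition. It first splits $E(\Gamma)$ into large- and small-deviation edges ($\eps>e^2-1$ versus $\eps\le e^2-1$), takes whichever half carries at least $p(\pat)/2$ of the potency, and then iterates on carefully designed \emph{auxiliary} conditions---(LD1), (LD2) in the large-deviation case, (SD1)--(SD3) in the small-deviation case---whose right-hand sides are $La_{i,w}w^2\sqrt d$, $L\widehat N_{i,w}/\sqrt d$, $LN_i a_{i,w}/(n\sqrt d)$, $LN_i m_{i,w}/\sqrt d$. These quantities have no $\log(en/a_{i,w})$ factor and are engineered so that their sums over \emph{all} $(i,w)$ are $O(L\sqrt d)$, via the $Z$-type bound and a double-counting argument (e.g.\ $\sum_{(i,w)\sim(i',w')}w'/(w\sqrt d)\le 2d$). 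The target inequality is then deduced from the auxiliary conditions \emph{after} the iteration stabilises, using a separate pointwise argument (an application of the simple integral lemma in the LD case, and an analogous manipulation in the SD case). The factor $1/2$ in the first conclusion of the proposition is exactly the price of the LD/SD split. So the missing idea is: do not delete on the condition you ultimately want; delete on simpler ``local potency'' thresholds whose violations are directly summable, and prove separately that surviving those thresholds forces the $b(\eps)$ inequality.
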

The second inequality in \refP{prop:redbound} encodes the idea that the pattern is everywhere locally unlikely.
Before continuing to the proof of \refP{prop:redbound} let us show that \refT{redbound} does indeed follow from \refP{prop:redbound} and the probability bound \refC{cor:bb}.

\begin{proof}[Proof of \refT{redbound}]  
Given $L\ge 20$ and a pattern $({\bf a},{\bf e})$, let $S=S(({\bf a},{\bf e}),L)$ be the subset of $V(\Gamma)$ given by \refP{prop:redbound}.  Since it is immediate from the statement of \refP{prop:redbound} that 
\[ \tilde{p}(({\bf a},{\bf e})_{S})  \geq \frac{p({\bf a},{\bf e})}{2}-55L\sqrt{d},\]
all that is left to prove is that
\[\p{({\bf a},{\bf e})_S~\mbox{can be found in G}}  \ll \prod_{(i,w) \in S}\!\! a_{i,w}^{d/2}\, \pran{\prod_{(i,w) \in S} {n \choose a_{i,w}\wedge \lfloor n/2 \rfloor}}^{1-L/10}. \]
We recall that finding a copy of $({\bf a},{\bf e})_S$ in $G$ corresponds exactly to finding a witness, i.e. a collection of sets $\{A_{i,w}\}_{(i,w) \in S}$, such that for each $(i,w)\in S$ the set $A_{i,w}\subseteq V_i$ has cardinality $a_{i,w}$, and such that $e(A_{i,w},A_{i',w'})=e_{i,w,i',w'}$ whenever $(i,w)\sim_{\Gamma} (i',w')$.

Since there are at most
\[ \prod_{(i,w)\in S}\binom{n}{a_{i,w}\wedge \lfloor n/2\rfloor}\]
choices of the collection $\{A_{i,w}\}_{(i,w) \in S}$ it suffices to prove that
\[ 
\p{\bigcap_{(i,w)\sim_{\Gamma} (i',w')} \{e(A_{i,w},A_{i',w'})=e_{i,w,i',w'}\}}\ll \prod_{(i,w) \in S} \!\! a_{i,w}^{d/4}\, \pran{\prod_{(i,w) \in S} {n \choose a_{i,w}\wedge \lfloor n/2 \rfloor}}^{-L/10}
\] 
for each such collection $\{A_{i,w}\}_{(i,w) \in S}$.  Furthermore, since the inequality $(en/a)^a \ge {n \choose a\wedge \lfloor n/2 \rfloor}$ holds for all $1\le a\le n$, it suffices to prove that 
\begin{align*}
& \quad \p{\bigcap_{(i,w)\sim_{\Gamma} (i',w')} \{e(A_{i,w},A_{i',w'})=e_{i,w,i',w'}\}} \\
\ll & \quad \prod_{(i,w) \in S}\!\! a_{i,w}^{d/4}\, \cdot \, \exp\left(-\frac{L}{10}\, \cdot \sum_{(i,w) \in S} a_{i,w}\, \log\left(\frac{en}{a_{i,w}}\right)\right)
\end{align*}
for each such collection $\{A_{i,w}\}_{(i,w) \in S}$.

The required bound now follows by applying \refC{cor:bb} for each matching, then using the independence of the matchings and the second bound of \refP{prop:redbound}. \end{proof}

The rest of the section is dedicated to proving \refP{prop:redbound}.  We divide into two cases based on whether the potency of $({\bf a},{\bf e})$ derives mostly from large deviation terms (terms in which $\eps_{i,w,i',w'}$ is particularly large) or small deviation terms.  The reason for splitting the proof in this way is that the expression $b(\eps_{i,w,i',w'})$ behaves rather differently in the two cases, resembling $\eps\log{\eps}$ and $\eps^2$ respectively.

We shall partition $\Gamma$ into two subgraphs $\Gamma_{\LD}=\Gamma_{\LD}\pat$ and $\Gamma_{\SD}=\Gamma_{\SD}\pat$ by setting
\[ V(\Gamma_{\LD}) \, =\, V(\Gamma_{\SD}) \, = \, V(\Gamma)\]
and defining
\begin{align*}
E(\Gamma_{\LD}) & \, =\, \{(i,w)\sim_{\Gamma} (i',w')\, :\, \eps_{i,w,i',w'}\pat> e^2 -1\}\, ,
\\  
E(\Gamma_{\SD}) & \, =\, \{(i,w)\sim_{\Gamma} (i',w')\, :\, -1 \le \eps_{i,w,i',w'}\pat\le e^2 -1\} \, .
\end{align*}
We have chosen $e^2-1$ as the cutoff between the large and small deviations regimes as a matter of technical convenience to do with the details of the proof.
Note that the potency $p({\bf a},{\bf e})$ of a pattern $({\bf a},{\bf e})$ may be expressed as
\[ p({\bf a},{\bf e}) \, = \,  \left| \sum_{(i,w)\simg (i',w')}\frac{w w' a_{i,w} a_{i',w'}}{n} \, \eps_{i,w,i',w'}  \right|\, .\] 
We define now two variants of potency.
\[ p_{\LD}({\bf a},{\bf e})\, = \, \left|\sum_{(i,w)\sim_{\Gamma_{\LD}} (i',w')}\frac{w w' a_{i,w} a_{i',w'}}{n} \, \eps_{i,w,i',w'} \right|   \, .\] 
and
\[ p_{\SD}({\bf a},{\bf e})\, = \,  \left|\sum_{(i,w)\sim_{\Gamma_{\SD}} (i',w')}\frac{w w' a_{i,w} a_{i',w'}}{n} \, \eps_{i,w,i',w'}  \right|\, .\] 
The latter expressions respectively capture the ``large deviations potency'' and ``small deviations potency.'' 
Since $p({\bf a},{\bf e}) \le p_{\LD}({\bf a},{\bf e})+p_{\SD}({\bf a},{\bf e})$, for every pattern $({\bf a},{\bf e})$ we have that \[ \max\{p_{\LD}({\bf a},{\bf e}),p_{\SD}({\bf a},{\bf e})\}\ge \frac{p({\bf a},{\bf e})}{2}\, .\]  
We shall prove the following propositions.

\begin{prop}\label{prop:rbld} Fix $L \ge 20$.
For any pattern $({\bf a},{\bf e})$, there exists $S=S(({\bf a},{\bf e}),L) \subset V(\Gamma)$ such that the following properties hold: 
\[
p_{\LD}(({\bf a},{\bf e})_{S})\geq p_{\LD}({\bf a},{\bf e})-30L\sqrt{d} \,  ,
\]
and, for all $(i,w) \in S$, 
\[
\sum_{(i',w')\in N_{\Gamma_{\LD}}(i,w)\cap S} \frac{a_{i,w}a_{i',w'}}{n} \, \cdot \, \pran{1+\frac{\eps_{i,w,i',w'}}{2}} \log(1+\eps_{i,w,i',w'})  \ge  \frac{L}{4} a_{i,w}\log\left(\frac{en}{a_{i,w}}\right)
\] 
\end{prop}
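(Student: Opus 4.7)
The plan is to construct $S$ by iterative peeling: initialize $S_{0}:=V(\Gamma)$, and as long as some vertex $v=(i,w)\in S_{t}$ fails the displayed local inequality with the sum restricted to $N_{\Gamma_{\LD}}(v)\cap S_{t}$, remove such a $v$ to form $S_{t+1}$. Since $V(\Gamma)$ is finite the procedure terminates, and at the final set $S$ the local inequality holds at every vertex by construction.

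The real content is the potency loss bound. In the $\LD$ regime every $\eps_{v,v'}$ exceeds $e^{2}-1>0$, so $P_{\LD}(\pat_{T}):=\sum_{vv'\in E(\Gamma_{\LD}),\,v,v'\in T}w_{v}w_{v'}(a_{v}a_{v'}/n)\eps_{v,v'}$ is a sum of positive terms, coincides with $p_{\LD}(\pat_{T})$, and removing $v$ from $T$ decreases it by exactly the local $\eps$-potency $\Lambda'_{v}(T):=\sum_{v'\in N_{\Gamma_{\LD}}(v)\cap T}w_{v}w_{v'}(a_{v}a_{v'}/n)\eps_{v,v'}$. Two elementary inputs control $\Lambda'_{v}$. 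First, the convexity of $x\mapsto(2+x)\log(1+x)-2x$ (with value and derivative zero at $x=0$) yields $\log(1+\eps)\ge 2\eps/(2+\eps)$ and hence $\eps\le (1+\eps/2)\log(1+\eps)$ for all $\eps\ge 0$. Second, adjacency in $\Gamma$ forces $w_{v'}/w_{v}\in(d^{-1/2},d^{1/2})$, so $w_{v}w_{v'}\le w_{v}^{2}\sqrt{d}$. Combined with the failure of the local inequality at the moment of removal this gives
\[
\Lambda'_{v_{t}}(S_{t-1})\ \le\ w_{v_{t}}^{2}\sqrt{d}\cdot\Lambda_{v_{t}}(S_{t-1})\ <\ \frac{L\sqrt{d}}{4}\,w_{v_{t}}^{2}\,a_{v_{t}}\log\!\left(\frac{en}{a_{v_{t}}}\right),
\]
where $\Lambda_{v}(T)$ denotes the left-hand side of the local inequality at $v$ relative to $T$.

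Summing over all removals yields $p_{\LD}(\pat)-p_{\LD}(\pat_{S})\le(L\sqrt{d}/4)\sum_{v\text{ removed}}w_{v}^{2}a_{v}\log(en/a_{v})$. The hard part will be bounding this last sum by $120$: the bare bound from $\sum_{v}w_{v}^{2}a_{v}\le 10$ combined with $\log(en/a_{v})\le\log(en)$ is too weak by a logarithmic factor, so the $Z$-type structure must be exploited more carefully. In particular the dyadic support of $D^{+}$ confines nonzero weights to $[w_{0},w_{0}d]$ and so bounds $|V(\Gamma)|\le h(\log_{2}d+1)$, and one should split the removed vertices according to whether $a_{v}$ is close to $n$ (where $\log(en/a_{v})=O(1)$) or much smaller (where the mass $w_{v}^{2}a_{v}$ must be correspondingly small). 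One may also assume without loss of generality that $p_{\LD}(\pat)>30L\sqrt{d}$, since otherwise $S=\emptyset$ works trivially.
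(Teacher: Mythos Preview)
Your peeling idea and the termination argument are fine, and the two elementary inputs you record ($\eps\le(1+\eps/2)\log(1+\eps)$ and $w_{v}w_{v'}\le w_{v}^{2}\sqrt d$) are correct. The gap is exactly where you say it is, and it is fatal for this route: the quantity $\sum_{v}w_{v}^{2}a_{v}\log(en/a_{v})$ is \emph{not} bounded independently of $n$ over $Z$-types, so the bound $120$ you are aiming for cannot be proved. Concretely, take two $\Gamma$-adjacent vertices $(i_{1},w_{0}),(i_{2},w_{0})$ with $a_{i_{1},w_{0}}=a_{i_{2},w_{0}}=1$ and choose $w_{0}\in D^{>0}$ with $w_{0}^{2}\approx 5$; set $e=1$ between them. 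This is a legitimate $Z$-type (the $\ell_{2}^{2}$-mass is $\approx 10$), both vertices fail your local test and are removed, and your sum is $\approx 10\log(en)$. Your proposed split by whether $a_{v}$ is close to $n$ does not help here: both $a_{v}$ equal $1$, yet together they carry the entire mass. In short, the chain $\Lambda'_{v}\le w_{v}^{2}\sqrt{d}\,\Lambda_{v}<\tfrac{L\sqrt d}{4}w_{v}^{2}a_{v}\log(en/a_{v})$ is simply too lossy; in the example above the genuine potency loss is $\approx w_{0}^{2}\le 5$, while your upper bound is $\asymp L\sqrt d\log n$.

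The paper does something genuinely different. Instead of peeling by the target local inequality, it peels by two auxiliary conditions on the local $\LD$-potency itself:
\[
p_{\LD}(\pat_{U};(i,w))\ \ge\ L\,a_{i,w}w^{2}\sqrt d
\qquad\text{and}\qquad
p_{\LD}(\pat_{U};(i,w))\ \ge\ \frac{L}{\sqrt d}\,\widehat N_{i,w},
\]
where $\widehat N_{i,w}=\sum_{(i',w')\in N_{\Gamma}(i,w)}(w')^{2}a_{i',w'}\cdot\frac{w'}{w\sqrt d}$. If a vertex is removed because the first fails, its local potency is at most $L a_{i,w}w^{2}\sqrt d$, and these sum to $\le 10L\sqrt d$ directly from $\sum w^{2}a_{i,w}\le 10$. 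If it is removed because the second fails, its local potency is at most $L\widehat N_{i,w}/\sqrt d$; reversing the order of summation and using $\sum_{(i,w)\in N_{\Gamma}(i',w')}\frac{w'}{w\sqrt d}\le 2d$ (a geometric sum over dyadic $w$, times $d$ neighbours in $H$) bounds this contribution by $20L\sqrt d$. That gives the $30L\sqrt d$ without any $\log(en/a)$ factor. The local inequality you are targeting is then deduced \emph{afterwards} from these two conditions, via a Markov-type averaging (the paper's Lemma~\ref{lem:measure}): condition (LD2) guarantees that a constant fraction of the local $\LD$-potency sits on neighbours $(i',w')$ where $\eps_{i,w,i',w'}w^{2}d/(w')^{2}$ is large, and on those neighbours the $\log(1+\eps)$ produces a $\log(n/a_{i,w})$ factor which combines with (LD1) to give the displayed bound. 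The second peeling criterion, which bounds local potency in terms of the \emph{neighbours'} weighted mass rather than the vertex's own, is precisely the missing idea.
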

\begin{prop}\label{prop:rbsd} Fix $L \ge 20$.
For any pattern $({\bf a},{\bf e})$, there exists $S=S(({\bf a},{\bf e}),L) \subset V(\Gamma)$ such that the following properties hold.
\begin{align*}
p_{\SD}(({\bf a},{\bf e})_{S}) 	& \geq p_{\SD}({\bf a},{\bf e})-55L\sqrt{d}\qquad \text{and}\\
\sum_{(i',w')\in N_{\Gamma_{\SD}}(i,w)\cap S} \frac{a_{i,w}a_{i',w'}}{n} \, \cdot \, \frac{\eps_{i,w,i',w'}^2}{15} & \ge  \frac{L}{10} a_{i,w}\log\left(\frac{en}{a_{i,w}}\right) \qquad \text{for all} \, (i,w)\in S\, .\end{align*}
\end{prop}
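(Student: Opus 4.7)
The plan is to build $S$ by greedy deletion: start with $S = V(\Gamma)$ and, while some $(i,w) \in S$ violates the second inequality of the proposition with respect to the current $S$, remove it from $S$. The surviving $S$ (possibly empty) automatically satisfies that inequality, so what is at stake is bounding the accumulated loss in $p_{\SD}$ by $55L\sqrt{d}$. One may assume without loss of generality that $p_{\SD}({\bf a},{\bf e}) > 55 L\sqrt{d}$, for otherwise $S = \emptyset$ trivially works.

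For each deletion of $(i,w)$, the drop in $p_{\SD}$ is at most the local potency
\[
P(i,w) = \Bigl|\sum_{(i',w') \in N_{\Gamma_{\SD}}(i,w) \cap S} w w'\, \mu_{i,w,i',w'}\, \eps_{i,w,i',w'}\Bigr|.
\]
Cauchy--Schwarz gives $P(i,w)^2 \le (\sum \mu\eps^2)(\sum (ww')^2\mu)$. The first factor is below $(3L/2)\, a_{i,w}\log(en/a_{i,w})$ by the failure of condition 2 at the moment of deletion. For the second, the definition of $\Gamma$ forces $(w')^2 \le dw^2$ on every incident edge, while the $Z$-type constraint $\sum_{(i',w')}(w')^2 a_{i',w'} \le 10$ yields $\sum (ww')^2\mu \le 10\, w^2 a_{i,w}/n$. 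Together these give $P(i,w) \le w\, a_{i,w}\sqrt{15L\log(en/a_{i,w})/n}$. Summing over deletions and applying Cauchy--Schwarz with the splitting $wa_{i,w}\sqrt{\log(en/a_{i,w})/n} = (w\sqrt{a_{i,w}})\cdot(\sqrt{a_{i,w}\log(en/a_{i,w})/n})$, together with $\sum w^2 a_{i,w} \le 10$, bounds the total loss by $\sqrt{150L}\cdot (\sum a_{i,w}\log(en/a_{i,w})/n)^{1/2}$.

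The main obstacle is then to close the argument with a $\sqrt{d}$ factor, i.e.\ to bound the residual sum $\sum a_{i,w}\log(en/a_{i,w})/n$ by $O(Ld)$; a crude pointwise bound, using that only $O(h\log d)$ coordinates of a $Z$-type are nonzero, gives $O(h\log d)$, which is too weak. To extract the needed $\sqrt{d}$ I plan to pair the greedy argument with a complementary global estimate: by $d$-regularity of $H$ and two applications of Cauchy--Schwarz across fibers, $\sum_{\{v,v'\}\in E(\Gamma)}(ww')^2 \mu \le 50/n$, so combining with $p_{\SD}^2 \le (\sum_e \mu\eps^2)(\sum_e (ww')^2\mu)$ forces, in the regime $p_{\SD} > 55 L\sqrt{d}$, a matching lower bound on the total squared deviation $\sum_e \mu\eps^2$ that can be balanced against the residual sum. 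A cleaner alternative would be to invoke \refL{lem:measure} directly with $h(i,w) = \sum \mu\eps^2$ and $g(i,w) = a_{i,w}\log(en/a_{i,w})$, extracting $S$ in a single step and controlling the complementary loss via the same edge-density inequality; in either route the technical crux is to trade off the failure of the local cost bound against the global potency in order to produce the $\sqrt{d}$ factor.
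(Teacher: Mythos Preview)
Your greedy deletion and Cauchy--Schwarz computation are correct up to
\[
\sum_{(i,w)\text{ deleted}} P(i,w)\;\le\;\sqrt{150L}\,\Bigl(\sum_{(i,w)}\frac{a_{i,w}}{n}\log\frac{en}{a_{i,w}}\Bigr)^{1/2},
\]
but the gap you identify is real and, as far as I can see, not closable along the lines you sketch. The residual sum $\sum_{(i,w)} (a_{i,w}/n)\log(en/a_{i,w})$ is simply not $O(Ld)$ in general: each term is at most~$1$, and there can be up to $h\log_2(2d)$ nonzero terms, so for a long cycle ($d=2$, $h$ arbitrary) with each $a_{i,w_0}$ of order $n$ the sum is of order $h$, not $d$. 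Your ``complementary global estimate'' gives a \emph{lower} bound on $\sum_e\mu\eps^2$ in the regime $p_{\SD}>55L\sqrt d$, but there is no mechanism to convert that into an \emph{upper} bound on $\sum_{(i,w)} (a_{i,w}/n)\log(en/a_{i,w})$; the two quantities live on different scales and the Cauchy--Schwarz inequality you used has already decoupled them. The \refL{lem:measure} route has the same defect: it produces a set on which $h/g$ is at least a fixed fraction of $\int h/\int g$, but that ratio is not comparable to the fixed threshold $3L/2$ that defines your deletion rule.

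The paper's proof avoids this by \emph{not} deleting on the target inequality directly. It introduces three auxiliary criteria (SD1)--(SD3), each comparing the local potency $p_{\SD}(({\bf a},{\bf e})_U;(i,w))$ to a different quantity ($La_{i,w}w^2d^{1/2}$, $LN_ia_{i,w}/(nd^{1/2})$, $LN_im_{i,w}/d^{1/2}$), and deletes any vertex failing one of them. The point is that each of these three upper bounds on local potency \emph{does} sum to $O(L\sqrt d)$: (SD1) via the $Z$-type bound $\sum w^2a_{i,w}\le10$, (SD2) via the $d$-regularity of $H$ through $\sum_{(i,w):i\in N_H(i')}a_{i,w}\le nd$, and (SD3) via a counting argument on the possible values of $M_{i,w}$. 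A separate lemma (\refL{lem:suffSD}) then shows that (SD1)$\wedge$(SD2)$\wedge$(SD3) together imply your target inequality, using \refL{easylemma} to pass from $\sum \mu\eps$ to $\sum\mu\eps^2$. The extra conditions are not decoration; they are what makes the loss summable, and the role of $d$-regularity in (SD2) is exactly the ``$\sqrt d$ factor'' you are unable to extract from Cauchy--Schwarz alone.
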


Since 
\begin{equation}\label{eq:b} 
b(\eps) \, \ge  \begin{cases}   \frac{\eps^2}{15}	&  \mbox{ if } \eps \leq e^2-1 \\
				\pran{1+\frac{\eps}{2}} \log(1+\eps) & \mbox{ if } \eps > e^2 -1\, ,
			\end{cases}
			\end{equation}
\refP{prop:redbound} follows immediately from Propositions~\ref{prop:rbld} and \ref{prop:rbsd} by applying the appropriate proposition to the pattern $({\bf a},{\bf e})$ (i.e. applying \refP{prop:rbld} if $p_{\LD}\pat \ge p\pat/2$ and applying \ref{prop:rbsd} if $p_{\SD}\pat \ge p\pat/2$). The required bound on $\tilde{p}(\pat_S)$ is obtained by considering the set $E\subset E(\Gamma)$ given by $E=\Gamma_{\LD}|_S$ or by $E=\Gamma_{\SD}|_S$, as appropriate.

The proofs of \refP{prop:rbld} and \refP{prop:rbsd} are similar.  In both cases the set $S$ is found by repeatedly applying some straightforward reduction rules. In spirit, these rules can be understood by analogy with the following procedure. 
Suppose we are given a graph $F=(V,E)$ with average degree $\mu$. To find an induced subgraph with {\em minimum} degree at least $\mu/2$, we may simply repeatedly throw away vertices of degree less than $\mu/2$ until no such vertices remain. Throwing away vertices of such small degree can only increase the average degree in what remains, so this procedure must terminate with a non-empty subgraph satisfying the desired global minimum degree requirement. In this example the measure of `importance' of a vertex is its degree. When we apply a similar style of argument, the analogue of degree will be ``local potency'', suitably defined. Also, our rules for throwing away vertices will be more involved, and so it will take some work to verify that in throwing away vertices we do not decrease the overall potency by too much. 


We now proceed to the proofs of Propositions~\ref{prop:rbld} and ~\ref{prop:rbsd}.   We prove \refP{prop:rbld} first since the reductions required for its proof are simpler.

\subsection{Large deviation patterns: A proof of \refP{prop:rbld}}\label{sec:ldps}
Now and for the remainder of Section~\ref{sec:ldps}, we fix $L\ge 20$ and a pattern $\pat$.
Our aim is to find $S \subset V(\Gamma)$ with $p_{\LD}(({\bf a},{\bf e})_{S})  \geq p_{\LD}({\bf a},{\bf e})-30L\sqrt{d}$ and such that the inequality 
\[ \sum_{(i',w')\in N_{\Gamma_{\LD}}(i,w)\cap S} \frac{a_{i,w}a_{i',w'}}{n} \, \cdot \, \pran{1+\frac{\eps}{2}} \log(1+\eps) \ge  \frac{L}{4} a_{i,w}\log\left(\frac{en}{a_{i,w}}\right) \]
holds for all $(i,w)\in S$.

As alluded to above, we shall choose $S$ by repeatedly removing vertices of $V(\Gamma)$ that correspond to ``inconsequential'' parts of the pattern, so that in the resulting sub-pattern $({\bf a},{\bf e})_S$ there is a significant contribution to the overall potency from every vertex.  We write  
\[ p_{\LD}(({\bf a},{\bf e});(i,w)) = \left|\sum_{(i',w')\in N_{\Gamma_{\LD}}(i,w)} \frac{w w' a_{i,w} a_{i',w'}}{n} \, \eps_{i,w,i',w'}\right| \, ,\]
for the `local' large deviations potency at $(i,w)$, 
and write 
\[ \widehat{N}_{i,w}\pat = \sum_{(i',w')\in N_{\Gamma}(i,w)} (w')^2 a_{i',w'} \left(\frac{w'}{wd^{1/2}}\right)\]
for the amount of ``$L_2$-squared weight'' of the type ${\bf a}$ that appears on $\Gamma$-neighbours of $(i,w)$ weighted by the factor $w'/wd^{1/2}$.

%

The intuition behind this weighting factor is that in the large deviations regime, the most significant contributions to potency should be made by edges $(i,w)(i',w') \in E(\Gamma_{\LD})$ with $w$ and $w'$ close to a factor of $d^{1/2}$ apart. Note that this is {\em exactly} what happens for the eigenfunctions of the universal cover (the infinite $d$-ary tree) with near-supremal eigenvalues.

We say that a set $U \subset V(\Gamma)$ satisfies condition (LD1) if for each $(i,w) \in U$ we have 
\begin{itemize}
\item[(LD1)] $p_{\LD}(({\bf a},{\bf e})_U;(i,w)) \ge L a_{i,w}w^2 d^{1/2}$,
\end{itemize}
and that $U$ satisfies condition (LD2) if for each $(i,w) \in U$ we have 
\begin{itemize}
\item[(LD2)] $p_{\LD}(({\bf a},{\bf e})_S;(i,w)) \ge L \widehat{N}_{i,w}\pat /d^{1/2}$.
\end{itemize}
Condition (LD1) asks that $p_{\LD}(({\bf a},{\bf e})_S;(i,w))$ is large relative to the size of $a_{i,w}$; the factor $a_{i,w}w^2$ approximately measures the proportion of $L^2$-squared weight of ${\bf a}$ used by $a_{i,w}$.  In Condition (LD2) the factor $a_{i,w}w^2$ is replaced by $\widehat{N}_{i,w}/d$ which (in some sense) measures the average level of opportunity available to $a_{i,w}$.  (In the case of large deviations a good opportunity corresponds to $(i',w') \in N_{\Gamma_{\LD}}(i,w) \cap U$ with large $a_{i',w'}$ large and such that the ratio $w'/w$ is close to $d^{1/2}$.) 

We then let $S=S(({\bf a},{\bf e}),L) \subset V(\Gamma)$ be the maximal subset of $V(\Gamma)$ satisfying both (LD1) and (LD2). 
The monotonicity of $p_{\LD}(({\bf a},{\bf e})_U;(i,w))$ in $U$ guarantees that $S$ is unique and that $S$ can be determined by starting from $V(\Gamma)$ by repeatedly throwing away vertices that violate at least one of the two conditions, until no such vertices remain. 
In other words, writing $k = |V(\Gamma)|-|S|$, 
we may order the vertices of $V(\Gamma) \setminus S$ as $(i_1,w_1),\ldots,(i_k,w_k)$ in such a way that 
for each $1 \leq j \leq k$, letting $S_j = V(\Gamma) \setminus \{(i_1,w_1),\ldots,(i_{j-1},w_{j-1})\}$, 
we have 
\[
p_{\LD}(({\bf a},{\bf e})_{S_j};(i_j,w_j))<L \max\left( a_{i,w}w^2 d^{1/2}, \widehat{N}_{i,w} /d^{1/2}\right)\]
In other words, this ordering ``verifies'' that $(i_j,w_j) \not \in S$, for each $1 \leq j \leq k$. 

Proving \refP{prop:rbld} now reduces to establishing the following two lemmas.

\begin{lem}\label{lem:potboundLD} Let $S=S(({\bf a},{\bf e}),L) \subset V(\Gamma)$ be as defined above.  Then $p_{\LD}(({\bf a},{\bf e})_{S})  \geq p_{\LD}({\bf a},{\bf e})-30L\sqrt{d}$.\end{lem}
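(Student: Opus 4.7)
The plan is to track the evolution of $p_{\LD}$ under the reduction process that defines $S$ and exploit the fact that in the large deviations regime nothing cancels.

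\textbf{Step 1: a telescoping identity.} In $\Gamma_{\LD}$ every edge has $\eps_{i,w,i',w'}> e^2-1>0$, and since all the factors $w,w',a_{i,w},a_{i',w'}$ are non-negative, every summand in the expression defining $p_{\LD}$ is non-negative. Hence no absolute value inside the sum is needed and
\[
p_{\LD}(({\bf a},{\bf e})_{S_{j+1}}) \;=\; p_{\LD}(({\bf a},{\bf e})_{S_{j}})\;-\; p_{\LD}(({\bf a},{\bf e})_{S_{j}};(i_j,w_j)),
\]
so that the total loss telescopes: writing $k=|V(\Gamma)|-|S|$,
\[
p_{\LD}({\bf a},{\bf e})-p_{\LD}(({\bf a},{\bf e})_{S}) \;=\; \sum_{j=1}^{k} p_{\LD}(({\bf a},{\bf e})_{S_{j}};(i_j,w_j)).
\]
By the ordering property, each term is at most $L\,a_{i_j,w_j}w_j^{2}d^{1/2}+L\widehat{N}_{i_j,w_j}/d^{1/2}$, so it suffices to show
\[
Ld^{1/2}\sum_{j=1}^{k}a_{i_j,w_j}w_j^{2}\;+\;Ld^{-1/2}\sum_{j=1}^{k}\widehat{N}_{i_j,w_j}\;\le\;30L\sqrt{d}.
\]

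\textbf{Step 2: the first sum.} Since the $(i_j,w_j)$ are distinct vertices of $V(\Gamma)$, property {\bf 4} of $Z$-types yields
\[
\sum_{j=1}^{k}a_{i_j,w_j}w_j^{2}\;\le\;\sum_{(i,w)\in V(\Gamma)} w^{2}a_{i,w}\;\le\;10,
\]
contributing at most $10L\sqrt{d}$.

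\textbf{Step 3: the second sum.} This is where the geometric structure of the dyadic weights is used. Interchanging summations gives
\[
\sum_{(i,w)\in V(\Gamma)}\widehat{N}_{i,w}\;=\;\sum_{(i',w')\in V(\Gamma)}(w')^{2}a_{i',w'}\sum_{(i,w)\sim_{\Gamma}(i',w')}\frac{w'}{w\,d^{1/2}}.
\]
For fixed $(i',w')$, the $d$ choices of $i\in N_{H}(i')$ decouple from $w$, and the relevant $w$ are of the form $w=w'\cdot 2^{k}$ with $|k|<(\log_{2}d)/2$; thus
\[
\sum_{(i,w)\sim_{\Gamma}(i',w')}\frac{w'}{w\,d^{1/2}} \;=\; \frac{d}{d^{1/2}}\sum_{k:\,|k|<(\log_{2}d)/2} 2^{-k} \;\le\; \frac{d}{d^{1/2}}\cdot 2\,d^{1/2}\;=\;2d,
\]
the middle inequality being the sum of a two-sided geometric series dominated by twice its largest term $2^{(\log_{2}d)/2}=d^{1/2}$. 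Combined with property {\bf 4} again,
\[
\sum_{j=1}^{k}\widehat{N}_{i_j,w_j}\;\le\;\sum_{(i,w)\in V(\Gamma)}\widehat{N}_{i,w}\;\le\;2d\cdot 10\;=\;20d,
\]
contributing at most $20L\sqrt{d}$. Adding the two contributions gives the required $30L\sqrt{d}$ bound.

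The only slightly subtle point is Step~3, where one must carefully interchange the order of summation and recognise that the weighting factor $w'/(wd^{1/2})$ was chosen precisely so that, after summing over dyadic $w$ with $w/w'\in(d^{-1/2},d^{1/2})$, the geometric series collapses to an $O(1)$ factor; the remaining factor $d$ (from the $H$-degree) is exactly what compensates the $d^{-1/2}$ in front of $\widehat{N}_{i_j,w_j}$, leaving a clean $O(\sqrt{d})$ total loss.
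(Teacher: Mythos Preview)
Your proof is correct and follows essentially the same approach as the paper: bound the total loss by the sum of local potencies at removed vertices, control the (LD1)-type contribution by $\sum w^2 a_{i,w}\le 10$, and control the (LD2)-type contribution by interchanging summation and observing that $\sum_{(i,w)\sim_\Gamma(i',w')} w'/(wd^{1/2})\le 2d$. Your use of the exact telescoping identity (exploiting positivity of all LD summands) and of $\max\le$ sum in place of the paper's $W_1,W_2$ split are cosmetic differences only.
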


\begin{lem}\label{lem:suffLD} 
Given any subset $U$ of $V(\Gamma)$, if $U$ satisfies (LD1) and (LD2) then for all $(i,w) \in U$, 
\[
\sum_{(i',w')\in N_{\Gamma}(i,w)\cap U} \frac{a_{i,w}a_{i',w'}}{n} \, \cdot \, \pran{1+\frac{\eps_{i,w,i',w'}}{2}} \log(1+\eps_{i,w,i',w'}) \ge  \frac{L}{4} a_{i,w}\log\left(\frac{en}{a_{i,w}}\right)\, .
\]
\end{lem}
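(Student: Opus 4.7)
My plan is to reduce the target inequality to a Jensen-type estimate on the $q$-weighted distribution of $\eps$-values, use (LD1) to control the weighted mean, and invoke (LD2) to handle a residual regime in which (LD1) alone is insufficient. (I read the lemma as concerning $N_{\Gamma_{\LD}}(i,w)\cap U$, matching its use in the proof of \refP{prop:rbld}; on the LD range the terms of the sum are manifestly non-negative.)

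After dividing the target by $a_{i,w}$, and writing $\eps=\eps_{i,w,i',w'}$, $q_{i',w'}=a_{i',w'}/n$, $Q=\sum_{(i',w')\in N_{\Gamma_{\LD}}(i,w)\cap U} q_{i',w'}$, $\bar\eps=\sum q_{i',w'}\eps/Q$, and $f(x)=(1+x/2)\log(1+x)$, I need $\sum q_{i',w'}f(\eps)\ge \tfrac{L}{4}\log(en/a_{i,w})$. I would record three elementary facts: (a) $f$ is convex on $[0,\infty)$ since $f''(x)=x/(2(1+x)^2)\ge 0$; (b) $f(x)\ge x$ on $[0,\infty)$; (c) for LD edges $\eps\ge e^2-1$, so $\log(1+\eps)\ge 2$ and $f(\eps)\ge (\eps/2)\log(1+\eps)$. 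From (LD1), after dividing by $a_{i,w}w^2 d^{1/2}$ and using $w'/(wd^{1/2})\le 1$ on $\Gamma$-edges, I extract $Q\bar\eps\ge L$. Jensen's inequality, together with $\bar\eps\ge e^2-1$, then gives
\[
\sum q_{i',w'} f(\eps)\;\ge\; Q f(\bar\eps)\;\ge\; \tfrac{Q\bar\eps}{2}\log(1+\bar\eps)\;\ge\; \tfrac{L}{2}\log(1+\bar\eps).
\]
This already exceeds $\tfrac{L}{4}\log(en/a_{i,w})$ whenever $\bar\eps\ge \sqrt{en/a_{i,w}}-1$, handling the ``large-deviation-on-average'' case.

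In the opposite regime $\bar\eps<\sqrt{en/a_{i,w}}-1$, the inequality $Q\bar\eps\ge L$ forces $Q$ to be comparatively large (at least of order $L\sqrt{a_{i,w}/n}$), and I would appeal to (LD2). Dividing (LD2) by $wa_{i,w}$ and lower-bounding $w'/w\ge d^{-1/2}$ on its LHS provides a bound complementary to (LD1), weighted by $w'$ rather than uniformly. Multiplying (LD1) and (LD2) (AM--GM) gives $p_{\LD}(\cdot;(i,w))\ge Lw\sqrt{a_{i,w}\,\widehat{N}_{i,w}}$, and this strengthening should force a non-negligible $q$-mass to concentrate at $\eps$-values of order $n/a_{i,w}$, restoring the missing logarithmic factor and finishing that case.

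The main obstacle is precisely this intermediate regime. (LD1) on its own only yields $\sum q_{i',w'} f(\eps)\ge L$, short of the target by a factor of $\log(en/a_{i,w})$; restoring it relies on the geometric penalty $w'/(wd^{1/2})$ appearing in the definition of $\widehat{N}_{i,w}$ together with the two-sided bound $w'/w\in(d^{-1/2},d^{1/2})$ on $\Gamma$-edges. (LD2) constrains exactly the edges on which (LD1) is least tight, so the two conditions together pin down the distribution of the $\eps$-values, but the careful bookkeeping required to exploit this interplay and track the constants is where I expect the bulk of the technical work to lie.
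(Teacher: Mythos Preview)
Your first regime is fine, but the second regime (where $\bar\eps<\sqrt{en/a_{i,w}}-1$) is a genuine gap: you do not give an argument, only the hope that the AM--GM of (LD1) and (LD2) ``should force a non-negligible $q$-mass to concentrate at $\eps$-values of order $n/a_{i,w}$''. In fact tracking only $\bar\eps$ (the $q$-weighted mean of $\eps$) cannot succeed here, because the information in (LD2) is encoded through the weights $(w')^3/(wd^{1/2})$, not through $a_{i',w'}/n$; knowing $Q\bar\eps\ge L$ together with $p_{\LD}\ge Lw\sqrt{a_{i,w}\widehat N_{i,w}}$ does not by itself pin down where the $\eps$-mass sits. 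The missing idea is that the correct scalar to threshold on is $\eps_{i,w,i',w'}\,w^2 d/(w')^2$, not $\eps_{i,w,i',w'}$.

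The paper's proof avoids any regime split. It first proves (from (LD2) alone, via the elementary integral inequality of \refL{lem:measure}) that at least half of the local potency $p_{\LD}(\pat_U;(i,w))$ is carried by the set
\[
U_{\LD}(i,w)=\Bigl\{(i',w')\in N_{\Gamma_{\LD}}(i,w)\cap U:\ \eps_{i,w,i',w'}\,\frac{w^2 d}{(w')^2}\ge \frac{Ln}{2a_{i,w}}\Bigr\}.
\]
Second, for each LD edge it uses the pointwise inequality (valid because $1+\eps\ge e^2$ and $wd^{1/2}/w'\ge 1$)
\[
\frac{a_{i,w}a_{i',w'}}{n}\Bigl(1+\tfrac{\eps}{2}\Bigr)\log(1+\eps)\ \ge\ \frac{1}{2w^2 d^{1/2}}\cdot\frac{ww'a_{i,w}a_{i',w'}}{n}\,\eps\,\log\Bigl(\frac{\eps\,w^2 d}{(w')^2}\Bigr),
\]
so that on $U_{\LD}(i,w)$ the logarithm is uniformly $\ge\log(Ln/(2a_{i,w}))\ge\log(en/a_{i,w})$. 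Summing over $U_{\LD}(i,w)$, applying the half-potency bound, and then (LD1) gives exactly $\tfrac{L}{4}a_{i,w}\log(en/a_{i,w})$ in one stroke. The point is that the ratio $h/g$ in the measure lemma is designed so that (LD2) thresholds precisely the quantity $\eps w^2 d/(w')^2$ that appears inside the logarithm after the pointwise step; your Jensen argument never produces that combination.
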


\begin{proof}[Proof of \refL{lem:potboundLD}.]  With $(i_1,w_1),\ldots,(i_k,w_k)$ and $S_1,\ldots,S_k$ as above, 
write
\[
W_1 = \{ (i_j,w_j): 1 \le j \le k, p_{\LD}(({\bf a},{\bf e})_{S_j};(i_j,w_j))< L a_{i,w}w^2 d^{1/2}\}
\]
and write 
\[
W_2= \{ (i_j,w_j), 1 \le j \le k, p_{\LD}(({\bf a},{\bf e})_{S_j};(i_j,w_j))< L \widehat{N}_{i,w} /d^{1/2}\}\, .
\]
We note that $W_1 \cup W_2 =\{(i_1,w_1),\dots ,(i_k,w_k)\}$.
Since
\[
p_{\LD}(\pat_S) \ge p_{\LD}(\pat) - \sum_{1 \leq j \leq k} p_{\LD}(({\bf a},{\bf e})_{S_j};(i_j,w_j)),
\]
to prove the proposition it suffices to show that 
\begin{equation}\label{thirty}
\sum_{1 \leq j \leq k} p_{\LD}(({\bf a},{\bf e})_{S_j};(i_j,w_j)) \leq 30Ld^{1/2}. 
\end{equation}
First, for any $1 \leq j \leq k$ with $(i_j,w_j) \in W_1$, 
we have 
$p_{\LD}(({\bf a},{\bf e})_{S_j};(i_j,w_j))\le L a_{i,w}w^2 d^{1/2}$, and so 
\[
\mathop{\sum_{1 \leq j \leq k}}_{(i_j,w_j) \in W_1}p_{\LD}(({\bf a},{\bf e})_{S_j};(i_j,w_j)) 
\leq 
 \mathop{\sum_{1 \leq j \leq k}}_{(i_j,w_j) \in W_1} L a_{i,w} w^2 d^{1/2}\le 10 Ld^{1/2}\, ,\] 
the last inequality holding since ${\bf a}$ is a Z-type so $\sum_{(i',w') \in V(\Gamma)} w'^2 a_{i',w'} \le 10$. 
 
Now, for $(i_j,w_j)\in W_2$, we have that $p_{\LD}(({\bf a},{\bf e})_{S_j};(i_j,w_j))\le L \widehat{N}_{i,w}/d^{1/2}$, where (we recall) 
\[
\widehat{N}_{i,w}=\sum_{(i',w')\in N_{\Gamma}(i,w)}a_{i',w'}w'^2\left(\frac{w'}{wd^{1/2}}\right)\] 
Note that for any $(i,w) \in V(\Gamma)$ and any neighbour $(i',w')$ of $(i,w)$ we have 
$w'/wd^{1/2} \leq 1$. Since all weights $w$ are in $\dog$, it follows that  
\begin{equation}\label{twod}
\sum_{(i,w)\in N_{\Gamma}(i',w')} \frac{w'}{wd^{1/2}}
= \sum_{i \in N_H(i')} \sum_{w \in \dog, w \le d^{1/2} w'} \frac{w'}{wd^{1/2}}
\le \sum_{i \in N_H(i')} \sum_{j \in \N} 2^{-j}
= 2d
\end{equation}
for each $(i',w')\in V(\Gamma)$.  Thus
\begin{align*}
\sum_{(i,w) \in W_2} p_{\LD}(({\bf a},{\bf e})_{S_j};(i_j,w_j)) & 
\le \frac{L}{d^{1/2}}\, \cdot \, \sum_{(i,w)\in W_2} \sum_{(i',w')\in N_{\Gamma}(i,w)}a_{i',w'}w'^2\left(\frac{w'}{wd^{1/2}}\right) \\
&
\le \frac{L}{d^{1/2}}\, \cdot \, \sum_{(i',w')\in V(\Gamma)} a_{i',w'}w'^2\, \cdot \sum_{(i,w)\in N_{\Gamma}(i',w')} \frac{w'}{wd^{1/2}}\\
&
\le 20 Ld^{1/2}\, ,\end{align*}
where the final inequality follows from \eqref{twod} and the fact that ${\bf a}$ is a $Z$-type.

Since $W_1\cup W_2=\{(i_1,w_1),\dots ,(i_k,w_k)\}$, the above bounds establish \eqref{thirty}, completing the proof of the proposition.\end{proof}

In preparation for the proof of \refL{lem:suffLD}, we first record the following fact. 
Given $U \subset V(\Gamma)$ and $(i,w) \in V(\Gamma)$, define 
\[
U_{\LD}(i,w) = \left\{(i',w') \in N_{\Gamma_{\LD}}(i,w)\cap U: \frac{\eps_{i,w,i',w'}w^2 d}{w'^2}\,\ge \, \frac{Ln}{2a}\right\} \, .
\]
The set $U_{\LD}(i,w)$ contains the vertices we shall view as making an important contribution in our forthcoming use of \refL{lem:measure}.
\begin{lem}\label{lem:pointwise} If $U \subset V(\Gamma)$ satisfies (LD2) then 
for all $(i,w)\in U$, 
\[
\sum_{(i',w')\in U_{\LD}(i,w)} \frac{w w' a_{i,w} a_{i',w'}}{n} \, \eps_{i,w,i',w'} \ge \frac{1}{2} p_{\LD}(({\bf a},{\bf e})_U;(i,w))\, 
\]
\end{lem}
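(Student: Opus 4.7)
The plan is to apply \refL{lem:measure} to a carefully chosen pair of functions on the set $\Omega = \{(i',w') \in N_{\Gamma_{\LD}}(i,w) \cap U : a_{i',w'} > 0\}$, equipped with counting measure. First I would observe that inside the large-deviations regime $\Gamma_{\LD}$ we have $\eps_{i,w,i',w'} > e^2 - 1 > 0$, so every term in the sum defining $p_{\LD}(({\bf a},{\bf e})_U;(i,w))$ is positive, and the absolute value around that sum can be dropped. This is what makes \refL{lem:measure} applicable: both of our functions will be positive.

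Concretely, I would set
\[
h(i',w') \;=\; \frac{w w' a_{i,w} a_{i',w'}}{n}\,\eps_{i,w,i',w'}, \qquad g(i',w') \;=\; a_{i',w'}(w')^2\cdot \frac{w'}{w d^{1/2}},
\]
so that $\sum_{\Omega} h = p_{\LD}(({\bf a},{\bf e})_U;(i,w))$ while $\sum_{\Omega} g \le \widehat{N}_{i,w}\pat$. A quick computation gives
\[
\frac{h(i',w')}{g(i',w')} \;=\; \frac{w^2 a_{i,w}\,d^{1/2}\,\eps_{i,w,i',w'}}{n(w')^2}.
\]
Now apply \refL{lem:measure} with $c = 1/2$: writing $E = \{(i',w') \in \Omega : h/g \ge \tfrac{1}{2}(\sum h / \sum g)\}$, the lemma yields $\sum_E h \ge \tfrac{1}{2} \sum_\Omega h = \tfrac{1}{2}p_{\LD}(({\bf a},{\bf e})_U;(i,w))$.

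The final step is to check that $E \subseteq U_{\LD}(i,w)$. Using condition (LD2) together with $\sum_\Omega g \le \widehat{N}_{i,w}\pat$,
\[
\frac{1}{2}\cdot\frac{\sum_\Omega h}{\sum_\Omega g} \;\ge\; \frac{1}{2}\cdot\frac{L\,\widehat{N}_{i,w}/d^{1/2}}{\widehat{N}_{i,w}} \;=\; \frac{L}{2 d^{1/2}}.
\]
Hence membership in $E$ forces $\dfrac{w^2 a_{i,w} d^{1/2}\eps_{i,w,i',w'}}{n(w')^2} \ge \dfrac{L}{2d^{1/2}}$, which rearranges exactly to $\dfrac{\eps_{i,w,i',w'}w^2 d}{(w')^2} \ge \dfrac{Ln}{2a_{i,w}}$, the defining condition for $U_{\LD}(i,w)$. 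Combined with $\sum_E h \ge \tfrac{1}{2}p_{\LD}(({\bf a},{\bf e})_U;(i,w))$, the desired bound follows since each summand of $h$ is non-negative.

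There is no real obstacle here; the content of the lemma is essentially the identification of the right weighting $g$. The only subtlety is making sure to work inside $\Gamma_{\LD}$ so that all contributions have a consistent sign (allowing us to manipulate $p_{\LD}(({\bf a},{\bf e})_U;(i,w))$ as an unsigned sum), and noting that the factor $w'/(wd^{1/2})$ appearing in $\widehat{N}_{i,w}\pat$ is exactly what is needed so that (LD2) translates into a useful lower bound on $\sum h/\sum g$.
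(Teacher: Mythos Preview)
Your proof is correct and follows essentially the same approach as the paper. The paper also applies \refL{lem:measure} with $c=1/2$, the only cosmetic difference being that it uses the weighted measure $\mu(i',w')=a_{i,w}a_{i',w'}$ and correspondingly rescaled $h,g$; since the rescaling is by a factor independent of $(i',w')$, the set $E$ and the resulting inequalities are identical to yours.
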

\begin{proof}
Fix $(i,w) \in U$, let 
$\Omega = N_{\Gamma_{\LD}}(i,w)\cap U$, and let $\cE = 2^{\Omega}$. 
Then for $(i',w') \in \Omega$, set 
\[
\mu(i',w') = a_{i,w}a_{i',w'}, \quad h(i',w') = \frac{ww'\eps_{i,w,i',w'}}{n}, \quad g(i',w')=\frac{w'^2}{nd^{1/2}}\frac{w'}{wd^{1/2}}. 
\]
Using (LD2), we then have 
\[
\int h d\mu =  p_{\LD}(({\bf a},{\bf e})_U;(i,w))\ge L\frac{\widehat{N}_{i,w}}{d^{1/2}}
\]
and
\[ 
\int g\, d\mu = \frac{a_{i,w}}{nd^{1/2}} \sum_{(i',w')\in N_{\Gamma_{\LD}}(i,w)\cap U}  a_{i',w'}w'^2\, \frac{w'}{wd^{1/2}} \ge\frac{a_{i,w}\widehat{N}_{i,w}}{nd^{1/2}}\, .
\]
It follows that 
\[
U_{\LD}(i,w) \supseteq \left\{(i',w') \in N_{\Gamma_{\LD}}(i,w)\cap U: \frac{h(i',w')}{g(i',w')} \geq \frac{1}{2}\frac{\int h d\mu}{\int g d\mu}\right\},
\]
and so by \refL{lem:measure} applied with $c=1/2$, 
\[
\sum_{(i',w')\in U_{\LD}(i,w)} \frac{w w' a_{i,w} a_{i',w'}}{n} \ge \int_{\frac{h}{g} \geq \frac{1}{2}\frac{\int h}{\int g}} h \, d\mu \ge 
\frac{1}{2} \int_{\Omega} h \, d\mu = \frac{1}{2} p_{\LD}(({\bf a},{\bf e})_U;(i,w)).
\]
\end{proof}

We now prove \refL{lem:suffLD}, completing the proof of \refP{prop:rbld}.

\begin{proof}[Proof of \refL{lem:suffLD}] 
We are given $U \subset V(\Gamma)$ satisfying (LD1) and (LD2), and aim to show that 
\[
\sum_{(i',w')\in N_{\Gamma}(i,w)\cap U} \frac{a_{i,w}a_{i',w'}}{n} \, \cdot \, \pran{1+\frac{\eps_{i,w,i',w'}}{2}} \log(1+\eps_{i,w,i',w'}) \ge  \frac{L}{4} a_{i,w}\log\left(\frac{en}{a_{i,w}}\right)\, .
\]

Note that $c\log{x}\ge \log(c^2 x)$ for $x\ge e^2$ and $c\ge 1$.  Since $1+\eps_{i,w,i',w'}\ge e^2$ for all $(i',w')\in N_{\Gamma_{\LD}}(i,w)$ we have that 
\[
\frac{w d^{1/2}}{w'} \log(1+\eps_{i,w,i',w'}) \ge \log\left((1+\eps_{i,w,i',w'})\left(\frac{w^2 d}{(w')^2}\right)\right)\ge \log\left(\frac{\eps_{i,w,i',w'}w^2 d}{(w')^2}\right)\, .
\]
It follows that
\begin{align}
& \quad \frac{a_{i,w}a_{i',w'}}{n} \, \cdot \, \pran{1+\frac{\eps_{i,w,i',w'}}{2}} \log(1+\eps_{i,w,i',w'})  \nonumber\\
= & \quad 
\frac{1}{w^2 d^{1/2}} \frac{ww'a_{i,w}a_{i',w'}}{n} \, \cdot \left(\frac{wd^{1/2}}{w'}\right)\, \cdot \pran{1+\frac{\eps_{i,w,i',w'}}{2}} \log(1+\eps_{i,w,i',w'}) \nonumber\\
\ge & \quad 
\frac{1}{2w^2 d^{1/2}}\frac{ww'a_{i,w}a_{i',w'}}{n} \eps_{i,w,i',w'} \log\left(\frac{\eps_{i,w,i',w'}w^2 d}{(w')^2}\right)\, \label{eq:rbld1}.
\end{align}

Note that the expression inside the preceding logarithm is precisely the expression included in the definition of $U_{\LD}(i,w)$.  
Applying first (\ref{eq:rbld1}), then \refL{lem:pointwise} and finally (LD1), we obtain that 
\begin{align*}
& \quad \sum_{(i',w')\in N_{\Gamma_{\LD}}(i,w)\cap U} \frac{a_{i,w}a_{i',w'}}{n} \, \cdot \, \pran{1+\frac{\eps_{i,w,i',w'}}{2}} \log(1+\eps_{i,w,i',w'}) \\
\ge & \quad
\frac{1}{2w^2 d^{1/2}}\sum_{(i',w')\in N_{\Gamma_{\LD}}(i,w)\cap U} \frac{ww'a_{i,w}a_{i',w'}}{n} \eps_{i,w,i',w'} \log\left(\frac{\eps_{i,w,i',w'}w^2 d}{w'^2}\right)\\
\ge & \quad 
\frac{1}{4w^2 d^{1/2}}p_{\LD}(({\bf a},{\bf e})_U;(i,w))\log\left(\frac{Ln}{2a}\right)\\
\ge & \quad 
\frac{L}{4} a_{i,w}\log\left(\frac{en}{a_{i,w}}\right)\, ,
\end{align*}
completing the proof of the lemma.
\end{proof}

\subsection{Small deviation patterns: A proof of \refP{prop:rbsd}}\label{sec:rbsd}
Our proof of \refP{prop:rbsd} is similar to our proof of \refP{prop:rbld} given above.  In that spirit, throughout Section~\ref{sec:rbsd} we fix $L\ge 20$ and a pattern $\pat$. We will find the required set $S$ by repeatedly removing vertices of $V(\Gamma)$ that make a small contribution to potency.

Given a pattern $\pat$ and $(i,w) \in V(\Gamma)$, we write 
\[ p_{\SD}(({\bf a},{\bf e});(i,w)) = \left|\sum_{(i',w')\in N_{\Gamma_{\SD}}(i,w)} \frac{w w' a_{i,w} a_{i',w'}}{n} \, \eps_{i,w,i',w'}\right| \, ,\]
for the `local' small deviations potency at $(i,w)$, 
and write 
\[
N_i = N_i \pat = \sum_{(i',w') \in V(\Gamma): i' \sim_H i} w'^2 a_{i',w'}.
\]
for the proportion of the $L_2$-squared weight of ${\bf a}$ that appears on fibres $V_{i'}$ that neighbour $V_i$. 
Our next aim is to state small deviations analogues of Lemmas~\ref{lem:potboundLD} and~\ref{lem:suffLD}. However, our reduction rules are slightly more involved in this case, and require one additional definition. 
Let 
\[
M_{i,w} = M_{i,w}\pat = \max\left\{\frac{N_i}{a_{i,w}w^2 d},\frac{en}{a_{i,w}}\right\}\,, 
\]
and let $m_{i,w} = (\log M_{i,w})/M_{i,w}$. Note that we always have $M_{i,w} \ge en/a_{i,w} \ge e$. Since the function $x \log x^{-1}$ is increasing on $(0,e^{-1}]$ it follows that we may equivalently write 
\[
m_{i,w} = 	\begin{cases}
			\frac{a_{i,w} w^2 d}{N_i} \log\left(\frac{N_i}{a_{i,w} w^2 d}\right) & \mbox{ if } 
			\frac{w^2 d}{N_i} \le \frac{1}{en} \\
			\frac{a_{i,w}}{en} \log\left(\frac{en}{a_{i,w}}\right) & \mbox{ if }
			\frac{1}{en} \le \frac{w^2 d}{N_i} 
			\end{cases}
\]
In what follows we will use that $m_{i,w} \le 1.18 M_{i,w}^{-2/3}$ always holds. 
This follows from the fact that $\log x \le 1.18 x^{1/3}$ on $(0,\infty)$.
 
We say that a set $U \subset V(\Gamma)$ satisfies (SD1), (SD2), and (SD3), respectively, if
\begin{itemize}
\item[(SD1)] $p_{\SD}(({\bf a},{\bf e})_U;(i,w)) \ge L a_{i,w}w^2 d^{1/2}$
\item[(SD2)] $p_{\SD}(({\bf a},{\bf e})_U;(i,w)) \ge L N_i\pat  a_{i,w}/(nd^{1/2})$, or 
\item[(SD3)] $p_{\SD}(({\bf a},{\bf e})_U;(i,w)) \ge L N_i\pat m_{i,w}\pat/d^{1/2} \, $
\end{itemize}
for all $(i,w)\in U$.

We remark that (SD1) is identical to (LD1), and asks that $p_{\SD}(({\bf a},{\bf e})_U;(i,w))$ is large relative to $a_{i,w}w^2$.
In (SD2), $a_{i,w}/n$ is the proportion of the fibre $V_i$ consumed by a set of size $a_{i,w}$, 
and $N_i / d$ represents the `average opportunity' in the neighbourhood of $(i,w)$. 
Notice that unlike in the large deviations case, no factor of the form $(w'/wd^{1/2})$ appears. This corresponds to the intuition that in the small deviations case, large potency is most likely to come from large sets of roughly equal weight. 
Finally, (SD3), in which $m_{i,w}$ appears, is a slight strengthening of either (SD1) or (SD2) -- by a logarithmic factor -- depending on which value $m_{i,w}$ takes. 

Let $S=S(\pat,L)$ be the maximal subset of $V(\Gamma)$ such that every $(i,w) \in S$ satisfies (SD1), (SD2), and (SD3). As in \refS{sec:ldps}, writing $k = |V(\Gamma)|-|S|$, 
we may order the vertices of $V(\Gamma) \setminus S$ as $(i_1,w_1),\ldots,(i_k,w_k)$ in such a way that 
for each $1 \leq j \leq k$, letting $S_j = V(\Gamma) \setminus \{(i_1,w_1),\ldots,(i_{j-1},w_{j-1})\}$, 
we have 
\[
p_{\SD}(({\bf a},{\bf e})_{S_j};(i_j,w_j))<L \max\left( a_{i,w}w^2 d^{1/2}, N_{i}a_{i,w} /(nd^{1/2}),N_i m_{i,w}/d^{1/2})\right)\]
In other words, this ordering verifies that $(i_j,w_j) \not \in S$, for each $1 \leq j \leq k$. 
\refP{prop:rbsd} is an immediate consequence of the following two lemmas. 
\begin{lem}\label{lem:potboundSD}
Let $S=S(\pat,L)$ be as defined above. Then we have $p_{\SD}(\pat_S) \ge p_{\SD}(\pat)-55L\sqrt{d}$. 
\end{lem}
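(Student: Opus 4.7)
The plan is to proceed in direct analogy with the proof of Lemma \ref{lem:potboundLD}, adapting to the three reduction conditions (SD1)--(SD3). I first order the removed vertices $(i_1,w_1),\ldots,(i_k,w_k)$ so that, writing $S_j = V(\Gamma) \setminus \{(i_1,w_1),\ldots,(i_{j-1},w_{j-1})\}$, each $(i_j,w_j)$ violates at least one of (SD1), (SD2), (SD3) at $S_j$. Telescoping then gives
\[
p_{\SD}(\pat_S) \ge p_{\SD}(\pat) - \sum_{j=1}^{k} p_{\SD}(\pat_{S_j};(i_j,w_j)),
\]
and it suffices to bound the right-most sum by $55 L \sqrt{d}$. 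I partition the removed vertices into three sets $W_1, W_2, W_3$ according to which condition each violates (breaking ties arbitrarily), aiming at contributions of $10L\sqrt{d}$, $10L\sqrt{d}$ and $35L\sqrt{d}$ respectively.

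For $W_1$, the argument is identical to the (LD1) case in Lemma \ref{lem:potboundLD}: since $p_{\SD}(\pat_{S_j};(i_j,w_j)) \le L a_{i_j,w_j} w_j^2 d^{1/2}$ for $(i_j,w_j) \in W_1$, summing with the $Z$-type bound $\sum_{(i,w)} w^2 a_{i,w} \le 10$ yields $10L\sqrt{d}$. For $W_2$, the key identity
\[
\sum_{i \in V(H)} N_i \;=\; d\sum_{(i',w') \in V(\Gamma)}(w')^2 a_{i',w'} \;\le\; 10d
\]
follows from $d$-regularity of $H$ together with the $Z$-type property, and combining it with $\sum_w a_{i,w} \le n$ gives
\[
\sum_{(i,w) \in W_2} \frac{L N_i a_{i,w}}{nd^{1/2}} \;\le\; \frac{L}{nd^{1/2}}\sum_{i} N_i \cdot n \;\le\; 10L\sqrt{d}.
\]

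The main obstacle is the $W_3$ contribution $\sum_{(i,w) \in W_3} L N_i m_{i,w}/d^{1/2}$. Here I would exploit the pointwise bound $m_{i,w} \le 1.18\, M_{i,w}^{-2/3}$ together with \emph{both} lower bounds $M_{i,w} \ge N_i/(a_{i,w} w^2 d)$ and $M_{i,w} \ge en/a_{i,w}$; combining them via the geometric mean yields
\[
N_i m_{i,w} \;\le\; 1.18\, N_i^{2/3}\, a_{i,w}^{2/3}\, w^{2/3}\, d^{1/3}\, (en)^{-1/3}.
\]
The task then reduces to bounding $\sum_{(i,w)} N_i^{2/3} a_{i,w}^{2/3} w^{2/3}$ by a suitable multiple of $d^{2/3}(en)^{1/3}$. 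I expect this to follow from a careful H\"older-inequality argument that balances these three factors against the fundamental estimates $\sum_{(i,w)} w^2 a_{i,w} \le 10$, $\sum_i N_i \le 10d$, and $\sum_w a_{i,w} \le n$, with the $(en)^{-1/3}$ cancelling the $n$-dependence. The delicate point, where I anticipate the main technical difficulty, is carrying out this summation without picking up a stray $\log d$ factor from the up-to-$\log_2(2d)$ non-zero weights per fibre; this likely requires a case split based on whether the maximum in $M_{i,w}$ is achieved by its first or second argument, handling each case with the tighter pointwise bound on $m_{i,w}$ appropriate to that regime.
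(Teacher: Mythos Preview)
Your treatment of $W_1$ and $W_2$ is correct and matches the paper. The gap is in $W_3$: the H\"older/geometric-mean approach you outline cannot avoid a stray factor, and the case split you propose on which argument realises the maximum in $M_{i,w}$ does not rescue it. Concretely, using only $M_{i,w}\ge en/a_{i,w}$ and $\sum_w a_{i,w}\le n$ one has $\sum_w 1/M_{i,w}\le 1/e$; maximising the entropy-type sum $\sum_w m_{i,w}=\sum_w(\log M_{i,w})/M_{i,w}$ over at most $K=\log_2(2d)$ dyadic weights subject to this constraint gives $\sum_w m_{i,w}$ of order $(1+\log K)/e\asymp\log\log d$, achieved by spreading the $a_{i,w}$ equally over the $K$ weights. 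The other branch of your case split (when $M_{i,w}=N_i/(a_{i,w}w^2 d)$) runs into the same entropy obstruction with $a_{i,w}w^2$ in place of $a_{i,w}/n$. So at best your $W_3$ contribution is of order $L\sqrt d\,\log\log d$, not $35L\sqrt d$, and the constant in the lemma would become $d$-dependent.

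The paper's argument differs in exactly the place you flag as delicate. It does \emph{not} break ties arbitrarily: it works with $W_3^-:=W_3\setminus W_1$, so that every $(i,w)\in W_3^-$ simultaneously satisfies the (SD1) \emph{lower} bound and the (SD3) \emph{upper} bound, which combine to give the extra constraint $a_{i,w}w^2 d<N_i m_{i,w}\le N_i/e$. This extra constraint is what kills the entropy loss. It pins the dyadic weight once $M_{i,w}$ is known approximately: if $M_{i,w}\le 2^j$ then $a_{i,w}\ge en/2^j$, whence $w^2\le N_i/(ea_{i,w}d)<2^{j-2}N_i/(nd)$, while $N_i/(a_{i,w}w^2 d)\le 2^j$ gives $w^2\ge N_i/(2^j nd)$; so at most $j-1$ dyadic $w$ can have $M_{i,w}\le 2^j$. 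Summing $m_{i,w}\le 1.18\,M_{i,w}^{-2/3}$ against this level structure yields $\sum_{w:(i,w)\in W_3^-}m_{i,w}<3.5$ uniformly in $i$, and then swapping sums gives $\sum_{(i,w)\in W_3^-}N_i m_{i,w}\le 35d$. The missing idea in your plan is this interaction between (SD1) and (SD3) and the resulting dyadic counting; a sharper H\"older step alone will not get you there.
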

\begin{lem}\label{lem:suffSD}
Given any subset $U$ of $V(\Gamma)$, if $U$ satisfies (SD1), (SD2), and (SD3) then for all $(i,w) \in U$, 
\[
\sum_{(i',w')\in N_{\Gamma_{\SD}}(i,w)\cap U} \frac{a_{i,w}a_{i',w'}}{n} \, \cdot \, \frac{\eps_{i,w,i',w'}^2}{15}  \ge  \frac{L}{10} a_{i,w}\log\left(\frac{en}{a_{i,w}}\right)\, .
\]
\end{lem}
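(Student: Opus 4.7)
The plan is to follow the structure of the proof of Lemma~\ref{lem:suffLD}, but to replace the measure-theoretic restriction (Lemma~\ref{lem:measure}) used in the large-deviations regime with a straightforward Cauchy--Schwarz argument adapted to the fact that in the small-deviations regime the relevant lower bound on $b(\eps)$ from \eqref{eq:b} is quadratic in $\eps$.

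Fix $(i,w) \in U$. Applying Cauchy--Schwarz to the local potency $p_{\SD}(\pat_U;(i,w))$, together with the elementary bound $\sum_{(i',w')} (a_{i,w} a_{i',w'}/n)\, w^2 (w')^2 \le a_{i,w} w^2 N_i/n$ (sums here and below ranging over $(i',w') \in N_{\Gamma_{\SD}}(i,w)\cap U$), would give
\[
\sum_{(i',w')} \frac{a_{i,w} a_{i',w'}}{n}\, \eps_{i,w,i',w'}^2 \ \ge\ \frac{n\, p_{\SD}(\pat_U;(i,w))^2}{a_{i,w} w^2 N_i}.
\]
The remaining task is to show the right-hand side is at least $\tfrac{3L}{2}\, a_{i,w}\log(en/a_{i,w})$, which together with the factor of $1/15$ on the left yields the claimed inequality.

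Write $r := N_i/(a_{i,w} w^2 d)$, so that $M_{i,w} = \max\{r, en/a_{i,w}\}$, and split into two cases. When $M_{i,w} = en/a_{i,w}$ (the ``large fibre'' regime $r \le en/a_{i,w}$), one has $m_{i,w} = (a_{i,w}/(en))\log(en/a_{i,w})$, and substituting (SD1) and (SD3) separately into the bound above yields $\sum \mu \eps^2 \ge L^2 n/r$ and $\sum \mu \eps^2 \ge L^2 a_{i,w}^2 r \log^2(en/a_{i,w})/(e^2 n)$, respectively. Using the elementary inequality $\max(A,B) \ge \sqrt{AB}$, these two bounds combine to give $\sum \mu \eps^2 \ge (L^2/e)\, a_{i,w}\log(en/a_{i,w})$. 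In the complementary ``large neighbourhood'' case $M_{i,w} = r$, the analogous calculation with (SD2) and (SD3) (now with $m_{i,w} = \log(r)/r$) gives bounds $L^2 a_{i,w}^2 r/n$ and $L^2 n \log^2(r)/r$, whose geometric mean is $L^2 a_{i,w}\log(r) \ge L^2 a_{i,w}\log(en/a_{i,w})$, using $r \ge en/a_{i,w}$ in this case. In both cases the hypothesis $L \ge 20 \ge 3e/2$ then completes the calculation.

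The main obstacle is conceptual rather than technical: none of the three conditions (SD1), (SD2), (SD3) suffices alone in every regime, so the proof requires combining two of them in each regime via the geometric mean. This is structurally simpler than the measure-lemma restriction used in the large-deviations proof, but it is not immediately evident which pair to use in which regime. The correct pairing is dictated by the definition of $M_{i,w}$ as the larger of $r$ and $en/a_{i,w}$, which implicitly encodes the boundary between the regime where (SD1) dominates and the regime where (SD2) does.
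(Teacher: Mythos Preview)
Your proof is correct. The second half of your argument---the two-case split according to which term achieves $M_{i,w}$, and in each case combining (SD3) with the appropriate one of (SD1) or (SD2) to bound $p_{\SD}^2$---is exactly what the paper does (the paper phrases it as multiplying two lower bounds on $p_{\SD}$ to get a bound on $p_{\SD}^2$, which is equivalent to your geometric-mean formulation).

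The genuine difference is in the first step. The paper does not use Cauchy--Schwarz; instead it introduces a restricted neighbourhood $U_{\SD}(i,w)$ consisting of those $(i',w')$ for which $|\eps_{i,w,i',w'}|/(ww'n)$ exceeds $p_{\SD}/(2w^2 a_{i,w}N_i)$, proves a separate lemma (\refL{easylemma}) showing this restricted set still carries at least half the local potency, and then uses the pointwise lower bound on $|\eps|/(ww'n)$ over $U_{\SD}(i,w)$ to extract one factor of $\eps$. This yields
\[
\sum_{(i',w')} \frac{a_{i,w}a_{i',w'}}{n}\,\eps_{i,w,i',w'}^2 \ \ge\ \frac{n\,p_{\SD}^2}{4\,w^2 a_{i,w} N_i},
\]
whereas your Cauchy--Schwarz argument gives the same inequality without the factor of $4$ in the denominator. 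Your route is therefore shorter (it renders \refL{easylemma} unnecessary for this lemma) and loses a smaller constant. The paper's restriction approach is kept presumably for structural parallelism with the large-deviations proof, where the analogous restriction (\refL{lem:pointwise}) is genuinely needed because the bound on $b(\eps)$ is $\eps\log\eps$ rather than $\eps^2$, and Cauchy--Schwarz does not apply cleanly.
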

\begin{proof}[Proof of \refL{lem:potboundSD}]
Write 
\begin{align*}
W_1 & = \{ (i_j,w_j): 1 \le j \le k, p_{\SD}(({\bf a},{\bf e})_{S_j};(i_j,w_j))< L a_{i,w}w^2 d^{1/2}\} \\
W_2  & = \{ (i_j,w_j), 1 \le j \le k, p_{\SD}(({\bf a},{\bf e})_{S_j};(i_j,w_j))< L N_{i}a_{i,w} /(nd^{1/2})\} \\
W_3  & = \{ (i_j,w_j), 1 \le j \le k, p_{\SD}(({\bf a},{\bf e})_{S_j};(i_j,w_j))< L N_{i}m_{i,w} /d^{1/2}\}\, .
\end{align*}
Also, let $W_3^-= W_3 \setminus W_1$. We note that for $(i,w) \in W_3^{-}$ we have $m_{i,w} \ge a_{i,w} w^2 d/N_i$.
Since $M_{i,w}\ge e$ in all cases, we have that $m_{i,w}\le e^{-1}$ and so
\begin{equation}\label{eq:forclaim}
\frac{a_{i,w} w^2 d}{N_i}  \le e^{-1},
\end{equation}
a bound we will use later.

Since $W_1 \cup W_2 \cup W_{3}^{-} = \{(i_1,w_1),\dots ,(i_k,w_k)\}$ 
and 
\[
p_{\SD}(\pat_S) \ge p_{\SD}(\pat) - \sum_{1 \le j \le k} p_{\SD}(({\bf a},{\bf e})_{S_j};(i_j,w_j))\, , 
\]
it suffices to prove that 
\[ 
\sum_{(i_j,w_j) \in W_1 \cup W_2 \cup W_{3}^{-}} p_{\SD}(({\bf a},{\bf e})_{S_j};(i_j,w_j))\le 55Ld^{1/2}\, .
\]
The proof that 
\begin{equation}\label{eq:wone}
\sum_{(i,w) \in W_1} p_{\SD}(({\bf a},{\bf e})_{S_j};(i_j,w_j))\le 10 Ld^{1/2}\,
\end{equation}
is identical to that given in the proof of \refL{lem:potboundLD}.  

For $W_2$ we use the fact that for all $i' \in V(H)$, 
\[
\sum_{(i,w): i\in N_H (i')} a_{i,w}\le nd\, ,
\]
and proceed as follows.
\begin{align}
\sum_{(i_j,w_j) \in W_2} p_{\SD}(({\bf a},{\bf e})_{S_j};(i_j,w_j))
&
\le 
\frac{L}{nd^{1/2}} \sum_{(i_j,w_j) \in W_2} a_{i_j,w_j} \sum_{(i',w'):i'\in N_H (i_j)}w'^2 a_{i',w'}
\nonumber\\
& 
\le
\frac{L}{nd^{1/2}} \sum_{(i',w')\in V(\Gamma)} w'^2 a_{i',w'} \sum_{(i,w): i\in N_H (i')} a_{i,w}
\nonumber\\
&
=
10 L d^{1/2},\label{eq:wtwo}
\end{align}
the last inequality holding since ${\bf a}$ is a Z-type so $\sum_{(i',w') \in V(\Gamma)} w'^2 a_{i',w'} \le 10$. 

The argument for $W_3^-$ is slightly more involved.  
We shall deduce our bound on $\sum_{(i_j,w_j) \in W_3^-} p_{\SD}(({\bf a},{\bf e})_{S_j};(i_j,w_j))$ from the following claim.

\textbf{Claim:} For each $i\in V(H)$ and each $j\ge 1$ there are at most $(j-1)$ vertices $(i,w)\in W_3^-$ for which $M_{i,w}\le 2^j$.

\textbf{Proof.}
Fix $j \ge 1$. First, by (\ref{eq:forclaim}) we have that $w^2 \le N_i/(ea_{i,w}d)$. If 
$M_{i,w} \le 2^j$ then by the definition of $M_{i,w}$ we also have 
$a_{i,w} \ge en/2^j$, and so 
\[
w^2 \le \frac{2^jN_i}{e^2 nd} < \frac{2^{j-2}N_i}{nd}.
\]
Using the other term in the maximum which defines $M_{i,w}$, we see that if $M_{i,w} \le 2^j$ then we also have 
\[
w^2 \ge \frac{N_i}{2^ja_{i,w}d} \ge \frac{N_i}{2^j n d}.
\] 
Since $w$ is a dyadic multiple of $(nd)^{-1/2}$ and our upper and lower bounds for $w^2$ are a factor of less than $2^{2j-2}$ apart, the claim follows. \qed

By the above claim and the fact, noted earlier, that $m_{i,w} \le 1.18 M_{i,w}^{-2/3}$ always holds, we have 
\begin{equation}\label{nine}
\sum_{w:(i,w)\in W_3^-} m_{i,w} \le \sum_{w:(i,w)\in W_3^-} 1.18 M_{i,w}^{-2/3} \le \sum_{j \ge 1} \frac{(j-1)}{2^{2j/3}}< 3.5.
\end{equation}
We are now ready to complete the proof of the lemma.  We have 
\begin{align*}
\sum_{(i_j,w_j) \in W_3^-} p_{\SD}(({\bf a},{\bf e})_{S_j};(i_j,w_j))
&
\le
\frac{L}{d^{1/2}} \sum_{(i_j,w_j) \in W_3^-}  m_{i_j,w_j} \sum_{(i',w'):i'\in N_H (i_j)}w'^2 a_{i',w'}\\
&
\le
\frac{L}{d^{1/2}} \sum_{(i',w')\in V(\Gamma)} w'^2 a_{i',w'} \sum_{(i,w)\in W_3^-: i\in N_H (i')} m_{i,w}\\
&
\le
\frac{L}{d^{1/2}} \sum_{(i',w')\in V(\Gamma)} w'^2 a_{i',w'} \sum_{(i,w)\in W_3^-: i\in N_H (i')} 1.18M_{i,w}^{-2/3}\\
& 
< 
35Ld^{1/2}\, ,
\end{align*}
the first inequality following from the definition of $W_3$, and the final inequality holding from (\ref{nine}) and since ${\bf a}$ is a Z-type. Combining this bound with (\ref{eq:wone}) and (\ref{eq:wtwo}) completes the proof.
\end{proof}

The last step before proving \refL{lem:suffSD} and thereby completing the proof of \refP{prop:redbound}, is to establish an analogue of \refL{lem:pointwise} for the small deviations case. In this case it turns out to be more straightforward to argue directly rather than to use \refL{lem:measure}.

Given $U \subset V(\Gamma)$ and $(i,w) \in V(\Gamma)$, let 
\[
U_{\SD}(i,w)=\left\{(i',w')\in N_{\Gamma_{\SD}}(i,w) \cap U:\frac{|\eps_{i,w,i',w'}|}{ww'n}\ge \frac{p_{\SD}(({\bf a},{\bf e})_U;(i,w))}{2w^2 a_{i,w} N_i}\right\}\, .
\]
\begin{lem}\label{easylemma} 
For all $(i,w) \in U$, we have 
\[
\left|\sum_{(i',w')\in U_{\SD}(i,w)} \frac{w w' a_{i,w} a_{i',w'}}{n} \, \eps_{i,w,i',w'}\right| \ge \frac{1}{2} p_{\SD}(({\bf a},{\bf e})_U;(i,w))\, .
\]
\end{lem}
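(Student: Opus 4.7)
The plan is to prove this by a direct triangle-inequality argument: split the sum defining $p_{\SD}(({\bf a},{\bf e})_U;(i,w))$ into the contribution from $U_{\SD}(i,w)$ and its complement within $N_{\Gamma_{\SD}}(i,w)\cap U$, then show that the ``excluded'' contribution is at most $\tfrac{1}{2} p_{\SD}(({\bf a},{\bf e})_U;(i,w))$. This is simply the pigeonhole/direct counterpart of \refL{lem:measure} that the authors mentioned; no measure-theoretic machinery is needed here because the threshold in the definition of $U_{\SD}(i,w)$ was chosen precisely with this computation in mind.

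The key step is the following: for any $(i',w') \in (N_{\Gamma_{\SD}}(i,w)\cap U)\setminus U_{\SD}(i,w)$, the defining inequality for $U_{\SD}$ fails, so
\[
|\eps_{i,w,i',w'}| \;<\; \frac{ww'n \cdot p_{\SD}(({\bf a},{\bf e})_U;(i,w))}{2w^2 a_{i,w} N_i}\, .
\]
Multiplying by $ww'a_{i,w}a_{i',w'}/n$ gives
\[
\left| \frac{ww'a_{i,w}a_{i',w'}}{n} \eps_{i,w,i',w'} \right| \;<\; \frac{(w')^2 a_{i',w'}}{2 N_i}\, p_{\SD}(({\bf a},{\bf e})_U;(i,w))\, .
\]
The utility of this bound is that the remaining dependence on $(i',w')$ is exactly the summand defining $N_i$.

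Summing over the excluded $(i',w')$ and using that $\sum_{(i',w') : i'\sim_H i}(w')^2 a_{i',w'} = N_i$ (so the truncated sum is at most $N_i$), the total excluded contribution is strictly less than $\tfrac{1}{2}\, p_{\SD}(({\bf a},{\bf e})_U;(i,w))$. The conclusion then follows immediately from the triangle inequality: the full sum has modulus exactly $p_{\SD}(({\bf a},{\bf e})_U;(i,w))$, and the excluded piece has modulus at most half of this, so the $U_{\SD}(i,w)$-restricted sum has modulus at least $\tfrac{1}{2}\, p_{\SD}(({\bf a},{\bf e})_U;(i,w))$. There is essentially no obstacle here; the only thing one needs to check is that the algebraic cancellation in the bound above — the way $w$, $n$, and $a_{i,w}$ all cancel to leave precisely a summand of $N_i$ scaled by $p_{\SD}/(2 N_i)$ — works out, which it does by construction of the threshold defining $U_{\SD}(i,w)$.
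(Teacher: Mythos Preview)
Your proof is correct and follows essentially the same approach as the paper's own proof: both argue via the triangle inequality, bounding the contribution from the complement $\overline{U}_{\SD}(i,w) = (N_{\Gamma_{\SD}}(i,w)\cap U)\setminus U_{\SD}(i,w)$ by using the threshold inequality defining $U_{\SD}(i,w)$, and then summing $(w')^2 a_{i',w'}$ over the complement to obtain at most $N_i$. The only cosmetic difference is that the paper first factors the excluded sum as $w^2 a_{i,w}\sum (w')^2 a_{i',w'}\cdot\frac{|\eps_{i,w,i',w'}|}{ww'n}$ before applying the bound, whereas you multiply through term-by-term; the algebra is identical.
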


\begin{proof} Fix $(i,w) \in U$. Since 
\[
p_{\SD}(({\bf a},{\bf e})_U;(i,w)) 
= \left|\sum_{(i',w') \in N_{\Gamma_{\SD}}(i,w) \cap U}
\frac{ww'a_{i,w}a_{i',w'}}{n}\eps_{i,w,i',w'}\right|, 
\]
by the triangle inequality it suffices to show that 
\[
\sum_{(i',w')\in \overline{U}_{\SD}(i,w)} \frac{w w' a_{i,w} a_{i',w'}}{n} \, |\eps_{i,w,i',w'}|  \le
\frac{p_{\SD}(({\bf a},{\bf e})_U;(i,w))}{2}\, ,
\]
where $\overline{U}_{\SD}(i,w)$ denotes the set $(N_{\Gamma_{\SD}}(i,w)\cap U)\setminus U_{\SD}(i,w)$.
We observe that the above sum may be re-expressed as
\[
w^2 a_{i,w} \sum_{(i',w')\in \overline{U}_{\SD}(i,w)} w'^2 a_{i',w'}\, \left(\frac{|\eps_{i,w,i',w'}|}{ww'n} \right)\, ,
\]
and that the bound 
\[
\frac{|\eps_{i,w,i',w'}|}{ww'n}\le \frac{p_{\SD}(({\bf a},{\bf e})_U;(i,w))}{2w^2 a_{i,w} N_i}
\]
holds for all $(i',w')\in \overline{U}_{\SD}(i,w)$.  The required bound now follows immediately by noting that
\[
\sum_{(i',w')\in \overline{U}_{\SD}(i,w)} w'^2 a_{i',w'}\le \sum_{(i',w')\in N_{\Gamma_{\SD}}(i,w)}w'^2 a_{i',w'}\le N_i\, ,
\]
the last inequality holding by the definition of $N_i$. 
\end{proof}
%
\begin{proof}[Proof of \refL{lem:suffSD}] 
Fix $U \subset V(\Gamma)$ satisfying (SD1), (SD2), and (SD3), and fix $(i,w) \in U$. To prove the lemma, we must show that 
\[
\sum_{(i',w')\in N_{\Gamma_{\SD}}(i,w)\cap U} \frac{a_{i,w}a_{i',w'}}{n} \, \cdot \, \frac{\eps_{i,w,i',w'}^2}{15}  \ge  \frac{L}{10} a_{i,w}\log\left(\frac{en}{a_{i,w}}\right).
\]
The proof divides into two cases depending on which value $m_{i,w}$ takes.  Before considering these cases separately, we note the following inequality: for all $(i,w) \in S$, 
\begin{align*}
& \quad \sum_{(i',w')\in N_{\Gamma_{\SD}}(i,w)\cap U} \frac{a_{i,w}a_{i',w'}}{n} \, \cdot \, \frac{\eps_{i,w,i',w'}^2}{15} \\
= & \quad 
\frac{n}{15}\sum_{(i',w')\in N_{\Gamma_{\SD}}(i,w)\cap U} \frac{ww'a_{i,w}a_{i',w'}|\eps_{i,w,i',w'}|}{n} \, \cdot \, \left(\frac{|\eps_{i,w,i',w'}|}{ww'n}\right)\\
\ge & \quad 
\frac{n}{15}\sum_{(i',w')\in U_{\SD}(i,w)} \frac{ww'a_{i,w}a_{i',w'}|\eps_{i,w,i',w'}|}{n} \, \cdot \, \left(\frac{|\eps_{i,w,i',w'}|}{ww'n}\right)\\ 
\ge & \quad 
\frac{n}{15} \frac{p_{\SD}(({\bf a},{\bf e})_U;(i,w))}{2w^2 a_{i,w} N_i}\, \cdot 
\sum_{(i',w')\in U_{\SD}(i,w)} \frac{ww'a_{i,w}a_{i',w'}|\eps_{i,w,i',w'}|}{n}\\
\ge & \quad 
\frac{n\, p_{\SD}(({\bf a},{\bf e})_U;(i,w))^2}{60 w^2 a_{i,w} N_i}\, .
\end{align*}
the second inequality holding by the definition of $U_{\SD}(i,w)$ and the third holding by \refL{easylemma}. 
It therefore suffices to prove that for all $(i,w) \in S$, 
\begin{equation}\label{STP}  
\frac{n\, p_{\SD}(({\bf a},{\bf e})_U;(i,w))^2}{60 w^2 a_{i,w} N_i} \ge  \frac{L}{10} a_{i,w}\log\left(\frac{en}{a_{i,w}}\right)\, .
\end{equation}

First, if $w^2 d/N_i \le 1/en$, then $m_{i,w}=\frac{w^2 a_{i,w} d}{N_i}\log\left(\frac{N_i}{a_{i,w} w^2 d}\right)$. In this case, by (SD3), we have that
\[  p_{\SD}(({\bf a},{\bf e})_U;(i,w))\ge L w^2 a_{i,w} d^{1/2} \log\left(\frac{N_i}{w^2 a_{i,w} d}\right)\ge L w^2 a_{i,w} d^{1/2} \log\left(\frac{en}{a_{i,w}}\right)\, ,
\] 
where the final inequality follows since $a_{i,w} w^2 d/N_i \le a_{i,w}/en$.  Combining this bound with (SD2) yields that 
\[  p_{\SD}(({\bf a},{\bf e})_U;(i,w))^2  
\ge \frac{L^2 w^2 a_{i,w}^2N_i}{n} \log\left(\frac{en}{a_{i,w}}\right)\, ,
\] 
which verifies \eqref{STP} in this case, since $L \ge 20 > 6$.

In the remaining case we have $1/en \le w^2 d/N_i$, from which it follows that $m_{i,w}=a_{i,w}\log(en/a_{i,w})/en$. So, by (SD3), we have that
\[  
p_{\SD}(({\bf a},{\bf e})_U;(i,w))\ge \frac{La_{i,w}N_i}{end^{1/2}}\log(en/a_{i,w})\, .
\]
Combining this bound with (SD1), 
we obtain that 
\[
p_{\SD}(({\bf a},{\bf e})_U;(i,w))^2 \ge \frac{L^2a_{i,w}^2 w^2 N_i}{en}\log(en/a_{i,w}),
\]
which verifies \eqref{STP} in this case since $L\ge 20\ge 6e$.This completes the proof. 
\end{proof}

\section{Proof of \refT{explain}}\label{sec:explain}

Our proof of \refT{explain} is similar in nature to our proof of \refT{main}.  As in that proof we shall use \refP{findwit}, which tells us that if $\lambda^*(G)$ is large then $G$ contains a pattern $\pat$ of large potency.  We require two other results.  The first, which is easily proved, is an approximate converse to \refP{findwit}.  The second, whose proof is more involved, is a variant of \refT{redbound}.

Given a pattern $\pat$ and a witness $\{A_{i,w}\}_{(i,w) \in V(\Gamma)}$ for the pattern, write $\alpha=\alpha\pat=\sum_{(i,w) \in V(\Gamma)} a_{i,w}$, and write $G[\{A_{i,w}\}_{(i,w) \in V(\Gamma)}]$ for the subgraph of $G$ induced by $\bigcup_{(i,w) \in V(\Gamma)} A_{i,w}$. 
\begin{prop}\label{witfind}
If $G$ contains a pattern $\pat$ with $p\pat=p\sqrt{d}$ then $\lambda^*(G) \geq (2p-40)\sqrt{d}$. 
Furthermore, $\lambda(G[\{A_{i,w}\}_{(i,w) \in V(\Gamma)}]) \geq (2p-40)\sqrt{d} - \frac{\alpha\sqrt{10}}{n}$. 
\end{prop}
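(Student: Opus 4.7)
The plan is to use the witness $\{A_{i,w}\}$ directly as the support of an explicit test vector, and extract both eigenvalue lower bounds via Rayleigh quotients.

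First I define the natural witness vector $y \in Z$ by $y_v = w$ if $v \in A_{i,w}$ and $y_v = 0$ otherwise. By property \textbf{4} of $Z$-types, $\|y\|_2^2 = \sum_{i,w} w^2 a_{i,w} \leq 10$, and equation \eqref{eq:potconnect} gives $|\langle y, y\rangle_{N,E^*}| = 2 p(\pat) = 2p\sqrt{d}$. Combining this with \refL{ipoe}, which bounds $|\langle y, y\rangle_{N,\R^2\setminus E^*}|$ by $4\sqrt{d}\|y\|_2^2 \leq 40\sqrt{d}$, yields the key inequality
\[
|\langle y,y\rangle_N| \;\geq\; (2p-40)\sqrt{d}.
\]
This is the workhorse estimate for both assertions.

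For the first assertion, I decompose $y = y_b + y_c$ into its fibre-balanced and fibre-constant parts exactly as in the proof of \refF{uncount}. That proof shows $\langle y,y\rangle_N = \langle y_b,y_b\rangle_M$, and orthogonality of the decomposition gives $\|y_b\|_2 \leq \|y\|_2$. Since $y_b$ is balanced on every fibre, Fact \ref{rayleigh} applied to $y_b/\|y_b\|_2$ (with the absolute-value formulation handling either sign of the quadratic form) gives a lower bound for $\lambda^*(G)$ in terms of $|\langle y_b,y_b\rangle_M|/\|y_b\|_2^2$, which combines with the key inequality to produce the claim.

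For the subgraph assertion, let $G' = G[\bigcup_{(i,w)} A_{i,w}]$ with adjacency matrix $M'$. Since $y$ is supported on $V(G')$, $\langle y,y\rangle_{M'} = \langle y,y\rangle_M = \langle y,y\rangle_N + \langle y,y\rangle_{\overline M}$. Applying Rayleigh's principle for $M'$ to $y$ (and using $\lambda(G') \geq |\lambda_{\min}(G')|$ to handle the sign issue if $\langle y,y\rangle_M < 0$) reduces the task to upper bounding $|\langle y,y\rangle_{\overline M}|$. Writing $S_i = \sum_{v \in V_i} y_v$, we have $\langle y, y\rangle_{\overline M} = (2/n)\sum_{ii' \in E(H)} S_i S_{i'}$. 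A fibrewise Cauchy--Schwarz gives $S_i \leq \sqrt{a_i \sum_w w^2 a_{i,w}}$ where $a_i = \sum_w a_{i,w}$, and a second application across fibres yields $\sum_i S_i \leq \sqrt{10\alpha}$, which together with the $d$-regularity of $H$ and the non-negativity of the entries of $y$ delivers the $\alpha\sqrt{10}/n$ correction term in the subgraph bound.

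The main obstacle I anticipate is the control of the $\overline M$-contribution: a naive bound along the lines of $|\langle y, y\rangle_{\overline M}| \le (d/2n)\|S\|^2$ produces a spurious factor of $d$, and obtaining the clean $\alpha\sqrt{10}/n$ form requires the more refined Cauchy--Schwarz chain sketched above, exploiting the constraint $\sum_{i,w} w^2 a_{i,w} \leq 10$ in a global rather than fibrewise manner.
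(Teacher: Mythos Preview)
Your approach is essentially the paper's. The paper defines the same witness vector $y$, derives the same key inequality $|\langle y,y\rangle_N|\ge(2p-40)\sqrt d$ from \eqref{eq:potconnect} and \refL{ipoe}, and then simply invokes \refF{uncount} for the first assertion (whose proof is exactly your $y=y_b+y_c$ decomposition).

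The one place you are working harder than necessary is the $\overline{M}$ term. Since every entry of $\overline{M}$ is either $0$ or $1/n$ and all entries of $y$ are nonnegative, one has in a single step
\[
\langle y,y\rangle_{\overline{M}}\;\le\;\frac{1}{n}\Bigl(\sum_{v} y_v\Bigr)^2,
\]
and a single Cauchy--Schwarz on the $\alpha$ nonzero coordinates gives $(\sum_v y_v)^2\le\alpha\|y\|_2^2\le 10\alpha$. Your two-stage fibrewise/global Cauchy--Schwarz reaches the same $\sum_i S_i\le\sqrt{10\alpha}$, but the $d$-regularity of $H$ plays no role and the ``spurious factor of $d$'' you worry about never appears once you bound the entries of $\overline{M}$ globally rather than summing over $E(H)$. (Also, since $y\ge 0$ and $M'\ge 0$ entrywise, $\langle y,y\rangle_{M'}\ge 0$ automatically, so your appeal to $\lambda(G')\ge|\lambda_{\min}(G')|$ is not needed either.)
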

\begin{proof}
Let $\{A_{i,w}: (i,w) \in V(\Gamma)\}$ be a witness for $\pat$, 
and let $y \in \mathbb{R}^{nh}$ be defined by, for each 
$(i,w) \in V(\Gamma)$, setting $y(v)=w$ for all $v \in A_{i,w}$ (and setting $y(v)=0$ for all remaining $v$). 
Since 
\[
\|y\|_2^2 = \sum_{(i,w) \in V(\Gamma)} w^2 a_{i,w} \le 10, 
\]

by \refL{ipoe} we have 
\[
|\ipo{y}_N| = |\ipo{y}_{N,E^*} + \ipo{y}_{N,\R^2\setminus E^*}| \geq |\ipo{y}_{N,\R^2\setminus E^*}| - 40\sqrt{d}.
\]
But $2p\pat = |\ipo{y}_{N,\R^2\setminus E^*}|$, so $|\ipo{y}_N| \geq (2p-40)\sqrt{d}$, and the first result follows from \refF{uncount}. 

For the second bound, write $G'=G[\{A_{i,w}\}_{(i,w) \in V(\Gamma)}]$, and write $M'$ for the adjacency matrix of $G'$. 
Also, let $y' \in \R^{a}$ be the vector obtained from $y$ by retaining only coordinates corresponding to vertices of $G'$ 
(recall that all other coordinates of $y$ are equal to zero). Then 
\[
|\ipo{y}_N| = |\ipo{y}_M- \ipo{y}_{\overline{M}}| = |\ipo{y'}_{M'}- \ipo{y}_{\overline{M}}|,
\]
and since all entries of $\overline{M}$ are either zero or $1/n$, 
\[
\ipo{y}_{\overline{M}} \leq \frac{1}{n} (\sum_{v \in [nh]} y_v)^2 \leq \frac{\alpha\sqrt{10}}{n}, 
\]
where in the last inequality we used that, given the constraints that $y$ has $\alpha$ non-zero entries and $\|y\|_2^2 \leq 10$, the sum $(\sum_{v \in [nh]} y_v)^2$ is maximized by taking all nonzero entries equal to $10^{1/2}\alpha^{-1/2}$. 
Thus 
\[
\frac{|\ipo{y'}_{M'}|}{\|y'\|_2^2} \geq \ipo{y'}_{M'} \ge |\ipo{y}_N|-|\ipo{y'}_M| \geq (2p-40)\sqrt{d} - \frac{\alpha\sqrt{10}}{n}.\qedhere
\]
\end{proof}

\begin{thm}\label{redboundtwo}
Fix $L \ge 20$. 
For any pattern $({\bf a},{\bf e})$, there exists $S=S(({\bf a},{\bf e}),L) \subset V(\Gamma)$ such that the following properties hold.
\begin{align*}
p(({\bf a},{\bf e})_{S}) 	& \geq p({\bf a},{\bf e})- 150 L\sqrt{d}\\
\p{({\bf a},{\bf e})_S~\mbox{can be found in G}} & \ll \prod_{(i,w) \in S} a_{i,w}^{d/4}\pran{\prod_{(i,w) \in S} {n \choose a_{i,w}\wedge \lfloor n/2 \rfloor}}^{1-L/10}. 
\end{align*}
\end{thm}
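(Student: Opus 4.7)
Plan. I would mirror the proof of \refT{redbound} but run a single combined reduction, rather than splitting into the large- and small-deviations cases, so as to preserve the actual potency $p$ rather than only $\tilde p$. Define $S = S(({\bf a},{\bf e}), L)$ to be a subset of $V(\Gamma)$ obtained as follows: iteratively remove from the current set $U$ any vertex $(i,w)$ that fails both (a)~conditions (LD1) and (LD2) of \refP{prop:rbld} at $(i,w)$ with respect to $U$, and (b)~conditions (SD1), (SD2), and (SD3) of \refP{prop:rbsd} at $(i,w)$ with respect to $U$, stopping when no such vertex remains. By construction, every $(i,w) \in S$ satisfies at least one of (a) or (b) in $({\bf a},{\bf e})_S$.

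For the probability bound, the local unlikeliness inequality
\[
\sum_{(i',w') \in N_{\Gamma}(i,w) \cap S} \mu_{i,w,i',w'}\, b(\eps_{i,w,i',w'}) \geq \frac{L}{10}\, a_{i,w} \log\!\left(\frac{en}{a_{i,w}}\right)
\]
holds at every $(i,w) \in S$: it follows from \refL{lem:suffLD} together with the large-deviations case of~(\ref{eq:b}) when (a) holds at $(i,w)$, and from \refL{lem:suffSD} together with the small-deviations case of~(\ref{eq:b}) when (b) holds. Given this, the probability estimate follows from \refC{cor:bb} exactly as in the proof of \refT{redbound}.

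For the potency bound, I would list the removed vertices as $(i_1, w_1), \ldots, (i_k, w_k)$ in the order of removal and write $S_j = V(\Gamma) \setminus \{(i_1, w_1), \ldots, (i_{j-1}, w_{j-1})\}$. Removing $(i_j,w_j)$ alters the signed sum defining the potency by at most $p_{\LD}(({\bf a},{\bf e})_{S_j}; (i_j, w_j)) + p_{\SD}(({\bf a},{\bf e})_{S_j}; (i_j, w_j))$ in absolute value, so a telescoping triangle inequality gives
\[
p(({\bf a},{\bf e})_S) \geq p({\bf a},{\bf e}) - \sum_{j=1}^k p_{\LD}(({\bf a},{\bf e})_{S_j}; (i_j, w_j)) - \sum_{j=1}^k p_{\SD}(({\bf a},{\bf e})_{S_j}; (i_j, w_j)).
\]
Since every removed vertex violates (a), the $W_1/W_2$-partitioning argument used in \refL{lem:potboundLD} bounds the first sum by $30L\sqrt{d}$; and since every removed vertex violates (b), the $W_1/W_2/W_3^-$ partitioning of \refL{lem:potboundSD} bounds the second by $55L\sqrt{d}$. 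The combined loss is thus at most $85L\sqrt{d} \leq 150L\sqrt{d}$.

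The only non-routine point is that the $W_3^-$ counting inside \refL{lem:potboundSD} must be re-verified in the combined setting: the original argument relied on the inequality $a_{i,w} w^2 d / N_i \leq e^{-1}$, which for $W_3^-$ followed from violating (SD3) while satisfying (SD1). In our combined reduction the same inequality holds whenever a removed vertex violates (SD3) but satisfies (SD1) at the moment of removal, so the counting claim, and hence the resulting contribution to the $W_3^-$ bound, extends verbatim. Everything else is essentially bookkeeping.
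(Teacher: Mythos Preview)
Your proposal is correct and follows essentially the same approach as the paper: a single combined reduction on $V(\Gamma)$, the observation that the proofs of Lemmas~\ref{lem:suffLD} and~\ref{lem:suffSD} are local in $(i,w)$, and the $W$-partition arguments of Lemmas~\ref{lem:potboundLD} and~\ref{lem:potboundSD} to control the potency loss. The only difference is cosmetic: the paper packages the stopping rule as four conditions (G1)--(G4) on the full local potency $p$ with $2L$ in place of $L$ (then halving to recover (LD1)--(LD2) or (SD1)--(SD3) at each surviving vertex), which is why it arrives at $150L\sqrt d$ rather than your $85L\sqrt d$.
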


In the same way as \refT{redbound} was deduced from \refP{prop:redbound} (together with \refC{cor:bb}) \refT{redboundtwo} may be deduced from the following proposition (\refP{prop:redboundtwo}).  We recall from \refS{sec:key} the definition of $b:(-1,\infty) \to [0,\infty)$,
\[ 
b(\eps)= (1+\eps)\log(1+\eps) - \eps \qquad \eps\in (-1,\infty).
\]

\begin{prop}\label{prop:redboundtwo} Fix $L \ge 20$. 
For any pattern $({\bf a},{\bf e})$, there exists $S=S(({\bf a},{\bf e}),L) \subset V(\Gamma)$ such that the following properties hold.
\begin{align*}
p(({\bf a},{\bf e})_{S})	& \geq p({\bf a},{\bf e}) - 150 L\sqrt{d}\qquad \text{and}\\
\sum_{(i',w')\in N_{\Gamma}(i,w)\cap S} \frac{a_{i,w}a_{i',w'}}{n} b(\eps_{i,w,i',w'})& \ge  \frac{L}{10} a_{i,w}\log\left(\frac{en}{a_{i,w}}\right) \qquad \text{for all} \, (i,w)\in S\, .\end{align*}
\end{prop}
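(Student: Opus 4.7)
My plan is to imitate the proof of \refP{prop:redbound}, but to replace the binary choice between the LD and SD reductions by a single \emph{combined} reduction, so that the full signed potency $p(\pat)$ (not just $\max(p_{LD},p_{SD})$) is preserved up to an additive $O(L\sqrt{d})$ loss. The key structural observation is that the proofs of \refL{lem:suffLD} and \refL{lem:suffSD} are genuinely vertex-wise: the conclusion at a given $(i,w)\in U$ uses only the validity of (LD1) and (LD2), or of (SD1), (SD2) and (SD3), at that particular $(i,w)$ with respect to $U$, together with the vertex-wise \refL{lem:pointwise} and \refL{easylemma}. Call $(i,w)\in U$ \emph{good in $U$} if either (LD1) and (LD2) both hold at $(i,w)$ in $U$, or (SD1), (SD2) and (SD3) all hold at $(i,w)$ in $U$. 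Since $p_{LD}(\pat_U;(i,w))$ and $p_{SD}(\pat_U;(i,w))$ are monotone in $U$, goodness is also monotone, so there is a unique maximal $S=S(\pat,L)\subseteq V(\Gamma)$ in which every vertex is good, obtainable by iteratively removing from $V(\Gamma)$ any vertex which is not good in the current set.

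For the local lower bound, at each $(i,w)\in S$ the appropriate vertex-wise version of \refL{lem:suffLD} or \refL{lem:suffSD} applies; combining with \eqref{eq:b} and the non-negativity of $b$, and extending the sum from $N_{\Gamma_{LD}}(i,w)\cap S$ or $N_{\Gamma_{SD}}(i,w)\cap S$ up to $N_\Gamma(i,w)\cap S$, delivers exactly
\[
\sum_{(i',w')\in N_\Gamma(i,w)\cap S}\frac{a_{i,w}a_{i',w'}}{n}\, b(\eps_{i,w,i',w'})\;\ge\;\frac{L}{10}\,a_{i,w}\log\!\left(\frac{en}{a_{i,w}}\right).
\]
For the potency bound, enumerate the removed vertices $V(\Gamma)\setminus S=\{u_1,\dots,u_k\}$ in deletion order, set $U_{j-1}=V(\Gamma)\setminus\{u_1,\dots,u_{j-1}\}$, and write $\sigma_{u_j}$ for the signed sum of $w_{u_j}w'(e-\mu)$ over $\Gamma[U_{j-1}]$-edges incident to $u_j$. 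The reverse triangle inequality gives $p(\pat_{U_j})\ge p(\pat_{U_{j-1}})-|\sigma_{u_j}|$, and splitting $\Gamma$ into $\Gamma_{LD}\cup\Gamma_{SD}$ followed by the triangle inequality yields $|\sigma_{u_j}|\le p_{LD}(\pat_{U_{j-1}};u_j)+p_{SD}(\pat_{U_{j-1}};u_j)$. Telescoping reduces matters to bounding $\sum_j p_{LD}(\pat_{U_{j-1}};u_j)$ and $\sum_j p_{SD}(\pat_{U_{j-1}};u_j)$.

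These two sums are controlled exactly as in \refL{lem:potboundLD} and \refL{lem:potboundSD}: each removed $u_j$ fails, at its moment of deletion, at least one of (LD1), (LD2) and at least one of (SD1), (SD2), (SD3), so it may be classified into some $W_1^{LD}$ or $W_2^{LD}$ and into some $W_1^{SD}$, $W_2^{SD}$, or $W_3^{SD,-}$. The class-wise bounds arise from the $Z$-type estimates $\sum_{(i,w)}a_{i,w}w^2\le 10$ and $\sum_{(i,w):\,i\in N_H(i')}a_{i,w}\le nd$, the graph-theoretic estimate $\sum_{(i,w)\in N_\Gamma(i',w')}w'/(w\sqrt d)\le 2d$, and the dyadic-level counting Claim which underlies the $W_3^{SD,-}$ bound. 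Crucially, none of these bounds depends on the particular order of deletion or on the precise intermediate set at that moment — only on the identity of the inequality which failed — so the arguments of \refL{lem:potboundLD} and \refL{lem:potboundSD} carry over verbatim to our combined reduction and yield $\sum_j p_{LD}(\pat_{U_{j-1}};u_j)\le 30L\sqrt d$ and $\sum_j p_{SD}(\pat_{U_{j-1}};u_j)\le 55L\sqrt d$. Hence $p(\pat_S)\ge p(\pat)-85L\sqrt d$, comfortably within the required $150L\sqrt d$. The main obstacle is precisely to confirm that (i) the sufficiency lemmas are genuinely vertex-wise and (ii) the class-wise sum estimates of \refL{lem:potboundLD} and \refL{lem:potboundSD} are insensitive to the removal order; once these are verified, the proof assembles as above.
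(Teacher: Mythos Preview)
Your argument is correct and follows essentially the same line as the paper's own proof; the one substantive difference is in how the surviving set $S$ is defined.  The paper also merges the LD and SD peelings into a single procedure, but instead of your disjunctive notion of ``good'' it imposes four conditions (G1)--(G4), each of the form $p(\pat_U;(i,w))\ge 2L\cdot r_j$, where $r_1,\ldots,r_4$ are the right-hand sides of (LD1), (LD2), (SD2), (SD3).  Since $\max(p_{\LD},p_{\SD})\ge p/2$, the conditions (G1)--(G4) force either the LD pair or the SD triple (with constant $L$) at each surviving vertex, and \refL{lem:suffLD} or \refL{lem:suffSD} is then invoked vertex-wise exactly as you propose.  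For the potency loss the paper classifies each removed vertex according to which (G$j$) fails and sums the four $W$-class bounds with the extra factor $2$, giving $2(10+20+10+35)L\sqrt d=150L\sqrt d$.  Your version records, at each deleted vertex, a failed LD-condition \emph{and} a failed SD-condition, splits $|\sigma_{u_j}|\le p_{\LD}+p_{\SD}$, and bounds each piece by the $W$-class estimates of \refL{lem:potboundLD} and \refL{lem:potboundSD} without the factor $2$, yielding the sharper $30+55=85L\sqrt d$.  Apart from this, the two arguments coincide; in particular your key observations---that the sufficiency lemmas are genuinely vertex-wise and that the $W$-class bounds are insensitive to the order of removal---are exactly what the paper uses as well.
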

\refT{redboundtwo} follows from \refP{prop:redboundtwo} in exactly the same way that \refT{redbound} follows from \refP{prop:redbound}.  Rather than repeat this proof we refer the reader to the proof of \refT{redbound} given in \refS{sec:key}.

We recall that our proof of \refP{prop:redbound} divided into two cases depending on whether $p_{\LD}\pat \ge p\pat/2$  or $p_{\SD}\pat \ge p\pat/2$.  These two cases were dealt with separately by \refP{prop:rbld} and \refP{prop:rbsd} respectively.  However, in \refP{prop:redboundtwo} we seek a lower bound on $p(\pat_S)$ rather than on $\tilde{p}(\pat_S)$, and for this reason we can not treat the large and small deviations cases separately. In other words, we must work with the graph $\Gamma$ rather than exclusively focussing on one of the subgraphs $\Gamma_{\LD},\Gamma_{\SD}$.  That having been said, our proof of \refP{prop:redboundtwo} resembles the proofs of Propositions~\ref{prop:rbld} and \ref{prop:rbsd} in almost all other respects.

We begin by establishing the following sufficient condition for the second bound of \refP{prop:redboundtwo} to hold.  For the remainder of the section fix a pattern $\pat$ and a constant $L \ge 20$. In what follows we denote by
\[
p(({\bf a},{\bf e});(i,w)) = \left| \sum_{(i',w')\in N_{\Gamma}(i,w)} \frac{w w' a_{i,w} a_{i',w'}}{n} \, \eps_{i,w,i',w'}\right|
\]
the `local' potency at $(i,w)$, and we recall the large and small deviations variants defined in \refS{sec:key}:
\[
p_{\LD}(({\bf a},{\bf e});(i,w)) = \left|\sum_{(i',w')\in N_{\Gamma_{\LD}}(i,w)} \frac{w w' a_{i,w} a_{i',w'}}{n} \, \eps_{i,w,i',w'}\right|
\]
and  
\[
p_{\SD}(({\bf a},{\bf e});(i,w)) = \left|\sum_{(i',w')\in N_{\Gamma_{\SD}}(i,w)} \frac{w w' a_{i,w} a_{i',w'}}{n} \, \eps_{i,w,i',w'}\right|\, .
\]
We say that a set $U \subset V(\Gamma)$ satisfies (G1), (G2), (G3), and (G4), respectively, if 
\begin{itemize}
\item[(G1)] $p(({\bf a},{\bf e})_U;(i,w)) \ge 2L a_{i,w}w^2 d^{1/2}$,
\item[(G2)] $p(({\bf a},{\bf e})_U;(i,w)) \ge 2L \widehat{N}_{i,w} /d^{1/2}$, 
\item[(G3)] $p(({\bf a},{\bf e})_U;(i,w)) \ge 2L N_i a_{i,w}/(nd^{1/2})$, or 
\item[(G4)] $p(({\bf a},{\bf e})_U;(i,w)) \ge 2L N_i m_{i,w}/d^{1/2} \, .$
\end{itemize}
for all $(i,w)\in U$, where $N_{i}=N_{i}\pat$, $\widehat{N}_{i,w}=\widehat{N}_{i,w}\pat$ and $m_{i,w}=m_{i,w}\pat$ are as defined in \refS{sec:key}.

\begin{lem}\label{lem:suff} If $U$ satisfies (G1),(G2),(G3), and (G4) then 
\[
\sum_{(i',w')\in N_{\Gamma}(i,w)\cap U} \frac{a_{i,w}a_{i',w'}}{n} b(\eps_{i,w,i',w'}) \ge  \frac{L}{10} a_{i,w}\log\left(\frac{en}{a_{i,w}}\right) \, .
\]
\end{lem}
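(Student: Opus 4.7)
The plan is to reduce directly to \refL{lem:suffLD} and \refL{lem:suffSD} via the trivial observation that the full local potency is bounded above by the sum of its large- and small-deviations components: by the triangle inequality (applied after splitting $N_{\Gamma}(i,w)$ into the disjoint union $N_{\Gamma_{\LD}}(i,w) \cup N_{\Gamma_{\SD}}(i,w)$), we have $p(({\bf a},{\bf e})_U;(i,w)) \le p_{\LD}(({\bf a},{\bf e})_U;(i,w)) + p_{\SD}(({\bf a},{\bf e})_U;(i,w))$. Consequently, at least one of the two summands is at least $p(({\bf a},{\bf e})_U;(i,w))/2$, which splits the argument into two parallel cases. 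One further preliminary I will record is that $b(\eps) \ge 0$ on $(-1,\infty)$ (immediate from $b(0)=0$ and $b'(\eps)=\log(1+\eps)$), so restricting the sum on the left to any subset of $N_{\Gamma}(i,w)\cap U$ only weakens the inequality.

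If $p_{\LD}(({\bf a},{\bf e})_U;(i,w)) \ge p(({\bf a},{\bf e})_U;(i,w))/2$, then dividing (G1) and (G2) by $2$ yields exactly the hypotheses (LD1) and (LD2) of \refL{lem:suffLD} for the local large-deviations potency at $(i,w)$. Applying that lemma produces the lower bound $\tfrac{L}{4} a_{i,w}\log(en/a_{i,w})$ for the sum of $(1+\eps/2)\log(1+\eps)$-weighted terms over $N_{\Gamma_{\LD}}(i,w)\cap U$; since every such edge has $\eps_{i,w,i',w'} > e^2-1$, the bound $b(\eps) \ge (1+\eps/2)\log(1+\eps)$ from \eqref{eq:b} applies termwise, and restricting the full $b$-weighted sum to this subset (allowed by the $b \ge 0$ preliminary) gives the claim with slack, since $L/4 \ge L/10$.

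If instead $p_{\SD}(({\bf a},{\bf e})_U;(i,w)) \ge p(({\bf a},{\bf e})_U;(i,w))/2$, then halving (G1), (G3), (G4) yields precisely (SD1), (SD2), (SD3) of \refL{lem:suffSD} for the local small-deviations potency at $(i,w)$. Applying that lemma gives the lower bound $\tfrac{L}{10}a_{i,w}\log(en/a_{i,w})$ for the $\eps^2/15$-weighted sum over $N_{\Gamma_{\SD}}(i,w)\cap U$; on that set $\eps_{i,w,i',w'} \le e^2-1$, so \eqref{eq:b} gives $b(\eps) \ge \eps^2/15$ termwise, and restricting to this subset (again using $b \ge 0$) yields the required inequality with constant exactly $L/10$.

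I do not anticipate any genuine obstacle, as this lemma is essentially a packaging statement that bundles the two asymmetric conclusions of \refS{sec:key} into a single statement whose hypotheses involve only the full potency $p$. The only point to watch is arithmetic bookkeeping: the factor $2L$ in (G1)--(G4), rather than $L$, is calibrated precisely so that the factor-of-$2$ loss from $p \le p_{\LD} + p_{\SD}$ still leaves the $L$-weighted hypotheses of (LD1)--(LD2) or (SD1)--(SD3) intact, allowing the conclusions of \refL{lem:suffLD} and \refL{lem:suffSD} to be invoked without further numerical slack.
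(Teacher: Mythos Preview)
Your proposal is correct and follows essentially the same route as the paper: split per vertex via $p \le p_{\LD}+p_{\SD}$, halve (G1)--(G4) to recover (LD1)--(LD2) or (SD1)--(SD3) as appropriate, invoke \refL{lem:suffLD} or \refL{lem:suffSD}, and convert back to $b$ via \eqref{eq:b}. Your added remark that $b \ge 0$ (so restricting the sum to $N_{\Gamma_{\LD}}$ or $N_{\Gamma_{\SD}}$ is legitimate) is a clean touch the paper leaves implicit.
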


\begin{proof} Since $E(\Gamma)=E(\Gamma_{\LD})\cup E(\Gamma_{\SD})$ it follows from the triangle inequality that $p(({\bf a},{\bf e})_S;(i,w))\le p_{\LD}(({\bf a},{\bf e})_U;(i,w)) + p_{\SD}(({\bf a},{\bf e})_U;(i,w))$, and so
\[
\max\{p_{\LD}(({\bf a},{\bf e})_U;(i,w)),p_{\SD}(({\bf a},{\bf e})_U;(i,w))\}\ge \frac{p(({\bf a},{\bf e})_U;(i,w))}{2}\, .
\]
First suppose that $p_{\LD}(({\bf a},{\bf e})_U;(i,w)) \ge p(\pat_U;(i,w))/2$. 
In this case, (G1) and (G2) imply that $U$ satisfies conditions (LD1) and (LD2), and so, by an application of \refL{lem:suffLD} we then have 
\[
\sum_{(i',w')\in N_{\Gamma_{\LD}}(i,w)\cap U} \frac{a_{i,w}a_{i',w'}}{n}
\, \cdot \, \pran{1+\frac{\eps_{i,w,i',w'}}{2}} \log(1+\eps_{i,w,i',w'}) \ge  \frac{L}{4} a_{i,w}\log\left(\frac{en}{a_{i,w}}\right)\, .
\]
The lemma now follows since (by \eqref{eq:b}) $b(\eps_{i,w,i',w'}) \ge (1+\eps_{i,w,i',w'}/2)\log(1+\eps_{i,w,i',w'})$.


By \refL{lem:suffLD} we then have
\[
\sum_{(i',w')\in N_{\Gamma_{\LD}}(i,w)\cap U} \frac{a_{i,w}a_{i',w'}}{n}
\, \cdot \, \pran{1+\frac{\eps_{i,w,i',w'}}{2}} \log(1+\eps_{i,w,i',w'}) \ge  \frac{L}{2} a_{i,w}\log\left(\frac{en}{a_{i,w}}\right)\, ,
\]
(note that the $L$ of conditions (LD1) and (LD2) became $2L$ in conditions (G1) and (G2)), which proves the result in this case.

Otherwise, we must have that $p_{\SD}(({\bf a},{\bf e})_U;(i,w)) \ge p(\pat_U;(i,w))/2$ and in this case the result follows similarly from \eqref{eq:b} and \refL{lem:suffSD}.
\end{proof}

Now let $S\subset V(\Gamma)$ be the maximal subset of $V(\Gamma)$ satisfying all of (G1),(G2),(G3), and (G4). The proof of \refP{prop:redboundtwo} is then completed by the following lemma.

\begin{lem}\label{lem:Stwo} Let $S$ be the subset defined above.  Then
\[
p(\pat_S)\ge p\pat - 150L\sqrt{d}\, .
\]
\end{lem}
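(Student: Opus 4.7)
The plan is to mimic closely the proofs of Lemmas~\ref{lem:potboundLD} and~\ref{lem:potboundSD}, combining their four reduction rules into a single partition of the removed vertices. By maximality of $S$, we can order the vertices $(i_1,w_1),\ldots,(i_k,w_k)$ of $V(\Gamma)\setminus S$ so that, writing $S_j=V(\Gamma)\setminus\{(i_1,w_1),\ldots,(i_{j-1},w_{j-1})\}$, each $(i_j,w_j)$ fails at least one of (G1)--(G4) inside $S_j$. Since
\[
p(\pat_S)\ \ge\ p(\pat)\ -\ \sum_{j=1}^{k} p(\pat_{S_j};(i_j,w_j)),
\]
it suffices to bound this last sum by $150L\sqrt{d}$.

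For $\ell\in\{1,2,3,4\}$ let $W_\ell$ be the set of removed $(i_j,w_j)$ for which the corresponding inequality (G$\ell$) fails at step $j$; then $W_1\cup W_2\cup W_3\cup W_4 = \{(i_1,w_1),\ldots,(i_k,w_k)\}$. I will bound the contribution of each $W_\ell$ exactly as in the earlier proofs, the only accounting change being that conditions (G1)--(G4) involve $2L$ rather than $L$, doubling every constant. For $W_1$: $p(\pat_{S_j};(i_j,w_j))\le 2L a_{i_j,w_j}w_j^2\sqrt{d}$, and $\sum_{(i,w)\in V(\Gamma)}w^2 a_{i,w}\le 10$ gives a contribution $\le 20L\sqrt{d}$. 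For $W_2$: copy the $\widehat{N}$-argument from Lemma~\ref{lem:potboundLD}, using \eqref{twod}, for a contribution $\le 40L\sqrt{d}$. For $W_3$: use $\sum_{(i,w):i\in N_H(i')}a_{i,w}\le nd$ as in the $W_2$-argument of Lemma~\ref{lem:potboundSD}, giving $\le 20L\sqrt{d}$.

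The only step requiring any care is $W_4$, which is the analog of $W_3^-$ in Lemma~\ref{lem:potboundSD}. I will replace $W_4$ by $W_4^-:=W_4\setminus W_1$ (which is harmless since any vertex in $W_4\cap W_1$ has already been accounted for under $W_1$). For $(i,w)\in W_4^-$, failing (G4) but not (G1) yields $2L a_{i,w}w^2\sqrt{d}\le p(\pat_{S_j};(i_j,w_j))< 2L N_i m_{i,w}/\sqrt{d}$, hence $a_{i,w}w^2 d/N_i<m_{i,w}\le e^{-1}$. This is exactly the hypothesis that drives the dyadic-counting claim in Lemma~\ref{lem:potboundSD}: for each $i$ and each $j\ge 1$, at most $j-1$ values of $w$ with $(i,w)\in W_4^-$ have $M_{i,w}\le 2^j$. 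Combining with $m_{i,w}\le 1.18\,M_{i,w}^{-2/3}$ and the $Z$-type bound on $\sum w'^2 a_{i',w'}$, the same calculation as in the original proof gives a contribution $\le 70L\sqrt{d}$.

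Summing the four contributions yields $20L\sqrt{d}+40L\sqrt{d}+20L\sqrt{d}+70L\sqrt{d}=150L\sqrt{d}$, as required. There is no genuine obstacle here: the four reduction rules are decoupled, and since we work directly with $p$ rather than $p_{\LD}$ or $p_{\SD}$ we do not need to split the pattern according to which subgraph of $\Gamma$ the edges live in; the doubling of the thresholds in (G1)--(G4) is exactly the slack that lets a single $S$ satisfy all four conditions simultaneously.
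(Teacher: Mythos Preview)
Your proposal is correct and follows precisely the approach the paper intends: the paper's own proof simply says ``This is proved exactly as Lemmas~\ref{lem:potboundLD} and~\ref{lem:potboundSD} were proved; we skip the details,'' and you have supplied exactly those details, doubling each constant to account for the $2L$ in (G1)--(G4) and handling the $W_4$ case via $W_4^-=W_4\setminus W_1$ just as $W_3^-$ was handled in Lemma~\ref{lem:potboundSD}. Your arithmetic $20+40+20+70=150$ checks out.
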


\begin{proof} This is proved exactly as Lemmas~\ref{lem:potboundLD} and~\ref{lem:potboundSD} were proved; we skip the details. \qedhere \end{proof}

We now have all the necessary preliminaries in place and we may turn to our proof of \refT{explain}.

\begin{proof}[Proof of \refT{explain}] We begin by noting that the star graph $F$ consisting of a single vertex of degree $d$ attached to $d$ vertices of degree one is always a subgraph of $G$ and has $\lambda(F)=\sqrt{d}$, which proves the result when $\lambda^*(G) \leq 1189248 \sqrt{d}$. 

We may now suppose that $\lambda^*(G) = M\sqrt{d}$ for some $M \geq 1189248$.  It follows by \refP{findwit} that $G$ contains a pattern $\pat$ of potency at least $K\sqrt{d}$, where $K=K(M)=(M/192) -3\ge 6191$. By \refT{redboundtwo}, applied with $L=L(M)=K/151\ge 41$, we have that $G$ contains a reduced pattern $\pat$ with $p\pat \ge L\sqrt{d}$ for which
\begin{equation}\label{eq:probbound}
\p{({\bf a},{\bf e})~\mbox{can be found in G}}  \ll \prod_{(i,w) \in S} a_{i,w}^{d/4}\pran{\prod_{(i,w) \in V(\Gamma)} {n \choose \alpha \wedge \lfloor n/2 \rfloor}}^{-1}. 
\end{equation}
where $\alpha=\sum_{(i,w) \in V(\Gamma)} a_{i,w}$.

Let $\{A_{i,w}\}_{(i,w) \in V(\Gamma)}$ be any witness for $\pat_S$, and write 
$G'=G[\{A_{i,w}\}_{(i,w) \in V(\Gamma)}]$. By \refP{witfind} we have $\lambda(G') \geq (2L-40)\sqrt{d}-\frac{\alpha\sqrt{10}}{n}$, 
so for $n \ge \sqrt{10}hd$, if it happens that $\alpha \leq hd$ then $\lambda(G') \geq L\sqrt{d} \ge \lambda^{*}(G)/1189248$, as required. 

To complete the proof we must bound by $n^{-hd}$ the probability that any reduced pattern $\pat$ of potency at least $L\sqrt{d}$ with $\alpha>hd$ may be found in $G$.  
We remind the reader that for reduced patterns $\pat$ we have the inequality (\ref{eq:probbound}). 
We split the proof of the required bound into two steps. First, recall the event $E_1$ from the proof of \refT{main}, which was the event 
that any reduced pattern $\pat$ with $p\pat \ge L\sqrt{d}$ and for which $a_{i,w}\ge 4hd\log_2 d$ for some $(i,w) \in V(\Gamma)$, can be found in $G$. In proving \refT{main} we showed that $\p{E_1}\le n^{-hd}/2$.

Second, let $E_3=E_3(M)$ be the event that 
$G$ contains a reduced pattern $\pat$ with $p\pat \ge L\sqrt{d}$, with $\alpha=\sum_{(i,w) \in S} a_{i,w} > hd$ and with $a_{i,w} \leq 4hd\log_2 d$ for all $(i,w) \in V(\Gamma)$. We complete the proof by proving that $\p{E_3} \leq n^{-hd}/2$ for all $n$ sufficiently large. 
As in the proof of \refT{main}, by a union bound we obtain that
\[
\p{E_3(M)} \leq \log_2(nh) (4hd\log_2 d)^{3hd \log_2 d}  {n \choose \alpha \wedge \lfloor n/2 \rfloor}^{-1}.
\]
Since $hd+1 \leq \alpha= \sum_{(i,w) \in S} a_{i,w}< (4hd \log_2 d)(h \log_2 d)$, the following bound holds for all $n \geq 2\alpha$:
\[
\p{E_3(M)} \leq \log_2(nh) (4hd\log_2 d)^{3hd \log_2 d} \frac{(2(hd+1))^{hd+1}}{n^{hd+1}},
\]
which is less than $n^{-hd}/2$ for $n$ large enough. 
\end{proof}

\section{The remaining proofs}\label{sec:zbound}
Before proving \refP{zbound} we establish the following, intermediate result, 
which contains our key convexity argument. For this step, it is notationally convenient to consider vectors $x \in \R^{V(G)}$ with $\|x\|_2^2 = O(nh)$ rather than $O(1)$. 
To this end, 
define 
\begin{align*}
D & = \{0 \} \cup \{2^i: i \in \N\} \cup \{-2^i: i \in \N\} \\
\hat{D} & = \{0 \} \cup \{2^i: i \in \N\},
\end{align*}
and let 
\begin{align*}
X^* & = \{x \in \R^{V(G)}:\|x\|_2^2 \le nh\}\, , \\
Y  & = \{y \in \R^{V(G)}:\|y\|_2^2 \leq 5nh,\forall~v \in V(G),y_v \in D\}\, , \nonumber\\
Y^+& = \{y \in \R^{V(G)}:\|y\|_2^2 \leq 10nh,\forall~v \in V(G),y_v \in \hat{D}\}\, . 
\nonumber
\end{align*}
Note that by Fact~\ref{uncount}, $nh\lambda^*(G) = \sup_{x \in X^*} |\ipo{x}_N|$. 
\begin{prop}\label{dyprop}
$nh \lambda^*(G) \leq 12 \sup_{y \in Y^+} |\ipo{y}_N|.$
\end{prop}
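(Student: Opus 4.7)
The plan is to establish the bound in two steps, exploiting throughout the fact that $N$ has zero diagonal: since $H$ is simple (so $i \not\sim_H i$) and $G$ has no self-loops, for every $u=(i,j) \in V(G)$ we have $M_{uu}=\overline{M}_{uu}=0$.

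First I would reduce to nonnegative vectors via polarization. Given $x \in X^*$, decompose $x=x^+-x^-$ with $x^\pm\ge 0$ having disjoint supports; then $|x|=x^++x^-$, and expanding $\ipo{x}_N$ and $\ipo{|x|}_N$ using bilinearity of $\ipo{\cdot}_N$ and symmetry of $N$ gives
\[
\ipo{x}_N \;=\; 2\ipo{x^+}_N + 2\ipo{x^-}_N - \ipo{|x|}_N.
\]
Hence $|\ipo{x}_N|\le 5\max\{|\ipo{x^+}_N|,|\ipo{x^-}_N|,|\ipo{|x|}_N|\}$ by the triangle inequality, and each of $x^+,x^-,|x|$ is nonnegative with squared norm at most $\|x\|_2^2\le nh$. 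This reduces matters to bounding $|\ipo{y}_N|$ for nonnegative $y$ with $\|y\|_2^2\le nh$.

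Second, and this is the key convexity step, I would discretize such a $y$ to a vector in $Y^+$. For each coordinate $v$, set $a_v=2^{\lfloor\log_2 y_v\rfloor}$ and $b_v=2^{\lceil\log_2 y_v\rceil}$ when $y_v\ge 2$, and $(a_v,b_v)=(0,2)$ otherwise, so that $a_v,b_v\in\hat{D}$ with $a_v\le y_v\le b_v$ and $b_v\le 2\max(y_v,2)$. Consider the axis-aligned box $B=\prod_v\{a_v,b_v\}\subset\hat{D}^{V(G)}$. Because $N_{uu}=0$, the form $z\mapsto\ipo{z}_N$ contains no $z_v^2$ contribution and is therefore multilinear in the coordinates of $z$. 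Letting $z$ be random with the $z_v$ independent and $z_v\in\{a_v,b_v\}$ chosen so that $\E{z_v}=y_v$, multilinearity (via the vanishing diagonal) gives
\[
\E{\ipo{z}_N}\;=\;\sum_{u\neq v} N_{uv}\,\E{z_u}\,\E{z_v}\;=\;\ipo{y}_N.
\]
Consequently some deterministic realization in $B$ has $\ipo{z}_N\ge \ipo{y}_N$ and another has $\ipo{z}_N\le\ipo{y}_N$, so at least one realization $z\in B$ satisfies $|\ipo{z}_N|\ge|\ipo{y}_N|$.

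To close, I would verify that every $z\in B$ already lies in $Y^+$: its entries are in $\hat{D}$ by construction, and
\[
\|z\|_2^2\;\le\;\sum_v b_v^2 \;\le\; 4\|y\|_2^2+4|V(G)|\;\le\; 8nh\;\le\; 10nh.
\]
Chaining the two steps yields $\sup_{x\in X^*}|\ipo{x}_N|\le 5\sup_{z\in Y^+}|\ipo{z}_N|\le 12\sup_{z\in Y^+}|\ipo{z}_N|$, and \refF{uncount} converts this into the claimed bound on $nh\,\lambda^*(G)$. The main conceptual obstacle is spotting the multilinearity: the vanishing diagonal of $N$ makes $\ipo{\cdot}_N$ linear in each coordinate separately, so its extrema over coordinate-aligned boxes are attained at vertices. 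This is precisely what lets one discretize to dyadic entries directly, without the $\epsilon$-net or doubling losses that bottleneck the approach of Lubetzky--Sudakov--Vu.
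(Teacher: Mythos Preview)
Your proof is correct and in fact tighter than the paper's (you obtain the constant $5$ rather than $12$), but the route is genuinely different.

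The paper discretizes first and polarizes second: it randomly rounds $x\in X^*$ to a signed dyadic vector $\by\in Y$ with $\E{\by}=x$, uses an independent-copy identity $\ipo{x}_N=\E{\ip{\by}{\by'}_N}$ to write $\ipo{x}_N$ as a convex combination of bilinear terms $\ip{y}{z}_N$ with $y,z\in Y$, and then runs a somewhat delicate polarization on $y_\pm,z_\pm$ (including a second pass through $w=y_+-z_+$) to land in $Y^+$. You reverse the order: a single polarization identity $\ipo{x}_N=2\ipo{x^+}_N+2\ipo{x^-}_N-\ipo{|x|}_N$ reduces to nonnegative vectors, and then you discretize directly by exploiting that $N$ has zero diagonal, so $z\mapsto\ipo{z}_N$ is affine in each coordinate and $\E{\ipo{z}_N}=\ipo{y}_N$ for the independent dyadic rounding. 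This sidesteps the two-vector bilinear form entirely and avoids the second polarization round, which is why your constant is better. The paper's averaging step, by contrast, does not need the zero diagonal (the independent-copy trick works for any symmetric $N$), so its argument is marginally more robust in that respect; but since here $N$ does have zero diagonal, your observation is the more efficient one.
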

\begin{proof}[Proof of \refP{dyprop}] 
The proof is rather straightforward; it consists of first an averaging argument, and second a polarization argument. 
The polarization argument becomes a little delicate only because we are trying to maintain the property that the 
entries of all vectors remain in $D$. 

For $r \in \mathbb{R}$ let $\sigma(r)=r/|r|$ if $r \neq 0$, and $\sigma(r)=0$ if $r=0$. We call $\sigma(r)$ the sign of $r$. 
Also, let $\ell(r)$ be the greatest element of $D$ which is less than or equal to $r$, and let $u(r)$ be the least element of 
$D$ which is greater than or equal to $r$. 

For $x \in \R^{V(G)}$, write 
\[S_x = \{ y \in \R^{V(G)}: \forall v \in V(G), \sigma(y_v)=\sigma(x), y_v \in \{\ell(x_v),u(x_v)\}\}.
\] 
Now fix any $x \in X^*$ and let $\by \in Y$ be randomly chosen as follows. Independently for each $v \in V(G)$: 
\begin{itemize}
\item if $0 \leq |x_v| < 1$ let $\by_i=\sigma(x_i)$ with probability $|x_i|$ and $\by_i=0$ otherwise.
\item if $2^j \leq |x_v| < 2^{j+1}$ for some $j \geq 1$, then let $\by_v = \sigma_v 2^{j+1}$ with probability $(|x_v|-2^j)/2^{j+1}$ and 
let $\by_v = \sigma_v 2^j$, otherwise. 
\end{itemize}
By definition, all entries of $\by$ have values in $\{0,\pm 1,\pm 2,\pm 4,\ldots\}$. Furthermore, 
for each $v$ with $|x_v| \geq 1$ we have $\by_v^2 \leq (2x_v)^2$, and for $i$ with $|x_v| < 1$ we have $\by_v^2 \leq 1$. 
It follows that 
\[
\sum_{v \in V(G)} \by_v^2 \leq \sum_{v \in V(G)} (2x_v)^2+1 = 5nh,
\]
so $\by$ is a random vector in $Y \cap S_x$. Since, for each $v \in V(G)$, $\E{\by_v}=x_v$ and the coordinates of $\by$ 
are chosen independently, we then have
\[
x = \sum_{y \in Y \cap S_x} y \p{\by=y},
\]
so 
\[
\ipo{x}_N = \sum_{y,z \in Y\cap S_x} \ip{y}{z}_N \p{\by=y}\p{\by=z}.
\]
In other words, $\ipo{x}_N$ is a convex combination of elements of $\{\ip{y}{z}_N:y,z \in Y\}$. 
Choosing $x \in X$ such that $|\ipo{x}_N|=nh\lambda^*(G)$, we then have
\begin{equation}\label{dyprop1}
nh \lambda^*(G) = |\ipo{x}{x}_N| \leq \sup_{y,z \in Y \cap S_x} |\ip{y}{z}_N|.
\end{equation}
Note that for all $y,z \in S_x$ and all $v \in V(G)$, 
either $y_v=z_v$ or else $\{y_v,z_v\}=\{\ell(x_v),u(x_v)\}$. In particular, $y_v$ and $z_v$ are either both non-negative 
or both non-positive. 

Choose $y,z \in Y \cap S_x$ for which the supremum in \refeq{dyprop1} is achieved, and write 
$y=y_+-y_-$, $z=z_+-z_-$, where e.g.~$y_+$ is the vector obtained from $y$ by replacing 
all negative entries of $y$ by zeros. 
Then for all $v \in V(G)$, $\sigma(y^+_v)=\sigma(z^+_v)$ and $\sigma(y^-_v)=\sigma(z^-_v)$ 
We then have 
\[
nh\lambda^*(G) \leq |\ip{y}{z}_N| = |\ip{y_+}{z_+}_N-\ip{y_+}{z_-}_N - \ip{y_-}{z_+}_N +\ip{y_-}{z_-}_N|,
\]
so either one of $|\ip{y_+}{z_-}_N|$,$|\ip{y_-}{z_+}_N|$ is at least $nh\lambda^*(G)/6$ or else 
one of $|\ip{y_+}{z_+}_N|$, $|\ip{y_-}{z_-}_N|$ is at least $nh\lambda^*(G)/3$. 

First suppose $|\ip{y_+}{z_-}_N| \geq nh\lambda^*(G)/6$. Since 
\begin{equation}\label{polarize}
\ipo{y_++z_-}_N = 2 \ip{y_+}{z_-}_N + \ipo{y_+}_N + \ipo{z_-}_N,
\end{equation}
it follows that either $\ipo{y_++z_-}_N| \geq nh\lambda^*(G)/9$ or else 
one of $|\ipo{y_+}_N|$ or $|\ipo{z_-}_N|$ is at least $nh \lambda^*(G)/9$. 
Also, since $y,z \in S_x$, the non-zero coordinates of $y_+$ correspond to zeros in $z_-$, we have $y_++z_- \in Y^+$. 
Since $\|y_+\|_2^2,\|z_-\|_2^2$, and $\|y_++z_-\|_2^2$ are all at most $\|y\|_2^2+\|z\|_2^2 \leq 10nh$, 
in this case we have proved the proposition. This argument 
also handles the case $|\ip{y_-^}{z_+}_N| \geq nh\lambda^*(G)/6$, by symmetry. 

The other case is that $|\ip{y_+}{z_+}_N| \geq nh\lambda^*(G)/3$ 
(the proof in the case that $|\ip{y_-}{z_-}_N| \geq nh\lambda^*(G)/3$ is symmetric). 
Since 
\[
\ipo{y_+-z_+}_N= \ipo{y_+}_N + \ipo{z_+}_N -2\ip{y_+}{z_+}_N,
\]
it follows that either one of $\ipo{y_+}_N, \ipo{z_+}_N$ is at least $nh\lambda^*(G)/12$, 
or else 
\[
\ipo{y_+-z_+}_N \geq nh\lambda^*(G)/2.
\] 
In the former case we have proved the proposition. In the latter case, note that 
since for all $v \in V(G)$, either $y_v = z_v$ or else $\{y_v,z_v\}=\{\ell(x_v),u(x_v)\}$, 
we have that all all entries of $y_+-z_+$ are in $D$, and 
furthermore, for each $v$, either $|y_{v,+}-z_{v,+}| = 0$ 
or $|y_{v,+}-z_{v,+}|=1$ or $|y_{v,+}-z_{v,+}| = |y_{v,+}| \wedge |z_{v,+}|$. It follows that $\|y_+-z_+\|^2 \leq 6nh$. 
Now write $w=y_+-z_+$, and then separate $w$ into its positive and negative 
parts: $w=w_+-w_-$. 
We then have 
\[
nh\lambda^*(G)/2 \leq |\ipo{w}_N| = | \ipo{w_+}_N- 2\ip{w_+}{w_-}_N + \ipo{w_-}_N|. 
\]
But we also have 
\[
\ipo{w_++w_-}_N= 2\ip{w_+}{w_-}_N + \ipo{w_+}_N+ \ipo{w_-}_N,
\]
and it follows that either one of $|\ipo{w_+}_N|$ or $|\ipo{w_-}_N|$ is at least $nh\lambda^*(G)/10$, 
or else $|\ipo{w_++w_-}_N| \geq nh\lambda^*(G)/10$. 
Since all of $\|w_+\|_2^2,\|w_-\|_2^2$ and $\|w_++w_-\|_2^2$ are at most $\|w\|_2^2 \leq 6nh$, 
they are all in $Y^+$ and so the proof is complete. 
\end{proof}

\begin{proof}[Proof of \refL{ipoe}]
For this proof write $N=(n_{vw})_{v,w \in V(G)}$. 
For a given vertex $v \in V(G)$, there are precisely $d$ vertices $w \in V(G)$ 
with $n_{vw}=1-1/n$, and $(n-1)d$ vertices $w \in V(G)$ with 
$n_{vw}=-1/n$ (the remaining entries in row $v$ of $N$ are all zero). Thus, for any $v \in V(G)$, 
\begin{align*}
|y_v \sum_{w \in V(G)} n_{vw} y_w \I{y_v\geq \sqrt{d}y_w}| & \leq d(1-1/n) \cdot \frac{y_v^2}{\sqrt{d}}+ (n-1)d(1/n) \frac{y_v^2}{\sqrt{d}} \\
	& < 2 y_v^2 \sqrt{d}.
\end{align*}
Writing $A= \{(x,y) \in \R^2:x \geq y\sqrt{d}\}$, it follows that
\begin{align*}
|\ipo{y}_{N,\R^2 \setminus E^*}| = 2|\ipo{y}_{N,A}|
 \leq 2 \sum_{v \in V(G)} 2y_v^2 \sqrt{d} 
		= 4\sqrt{d} \|y\|_2^2.
\end{align*}
\end{proof}

\begin{proof}[Proof of Proposition~\ref{zbound}]
Choose $z \in Y^+$ for which $|\ipo{z}_N| = |\sup_{y \in Y^+} \ipo{y}_N| \geq nh\lambda^*(G)/12$, possible by Proposition \ref{dyprop}. 
Since $\|z\|_2^2 \le 10nh$, 
by \refL{ipoe} we have 
\[
|\ipo{z}_{N,E^*}| \geq \ipo{z}_N - 40nh\sqrt{d}. 
\]
Next, for each $m \in \N_0$ let $I_m = E^* \cap [d^{m/2},d^{(m+2)/2})\times[d^{m/2},d^{(m+2)/2}) \subset \R^2$, 
and note that $E^* = \bigcup_{m \in \N} I_m$, so by the triangle inequality 
\[
|\ipo{z}_{N,E^*}| \leq \sum_{m =0}^{\infty} |\ipo{z}_{N,I_m}|. 
\]
Set 

\[
p_m = \sum_{\{v \in V(G): z_v \in [d^{m/2},d^{(m+2)/2})\}} \frac{z_v^2}{\|z\|_2^2}\, , 
\quad \alpha_m = \frac{|\ipo{z}_{N,I_m}|}{(|\ipo{z}_{N,E^*}|p_m)}\, ,
\]
so that 
\[
|\ipo{z}_{N,I_m}| = \alpha_m p_m |\ipo{z}_{N,E^*}|. 
\]
Since no point in $\R^2$ lies in more than two of the $I_m$, we have 
$\sum_{m\in \N} p_m \leq 2$, and so there must be $m^* \in \N$ for which $\alpha_{m^*} \geq 1/2$. 
Letting $z^*$ the the vector whose entry in position $v$ is $z_v\I{z_v \in I_{m^*}}$, 
we then have that 
\[
|\ipo{z^*}_{N,E}| = |\ipo{z}_{N,I_{m^*}}| \geq p_{m^*} |\ipo{z}_{N,E^*}|/2, 
\]
and furthermore, $\|z^*\|_2^2 = p_m^* \|z\|_2^2$. 
Let $y$ be the vector obtained from $z^*$ by multiplying all entries of $z^*$. 
By $2^{\lfloor -(\log_2 p_m)/2 \rfloor}$, we have 
\[
|\ipo{y}_{N,E}| \geq \frac{|\ipo{z}_{N,E^*}|}{8} \geq \frac{|\ipo{z}_N|}{8} - 5nh\sqrt{d} \geq \frac{nh\lambda^*(G)}{96}-5nh\sqrt{d}, 
\]
and $\|y\|_2^2 \leq \|z\|_2^2 \leq 10nh$.  
Finally, recall the definition of $Z$ from (\ref{zdef}). Letting $x$ be the vector obtained 
from $y$ by dividing all entries by $(nh)^{1/2}$, we obtain a vector $x \in Z$ with 
$|\ipo{x}_{N,E}| \ge \lambda^*(G)/96- 5\sqrt{d}$, which completes the proof. 
\end{proof}

We now give the promised proofs of \refP{protest} and Lemma~\ref{patcount}.

\begin{proof}[Proof of \refP{protest}]
Let $a \leq n-hn^{1/2}$ be the number of vertices in $G'$, and let $x' \in \R^{V(G')}$ be an eigenvector of $G'$ with eigenvalue $\lambda$, chosen so that $\|x'\|_2^2 = 1$. For each $i \in [h]$, let $t_i = \sum_{v \in V_i \cap V(G')} x_v$. 
We define a vector $y \in \R^{V(G)}$, for all $i \in [h]$ and each $v \in V_i \setminus V(G')$, setting $y_v = - t_i/|V_i \setminus V(G')|$, and taking all other $y_v$ equal to zero. Taking $x=x'+y$, it is immediate that $x$ is balanced. 
Also, for $v \in V(G)\setminus V(G')$, we have $|x_v|=|y_v| \leq 1/(n-a)$, and it follows that $\|x\|_2^2 \leq 1+nh/(n-a)^2 \leq 1+1/\lambda$, since $(n-a) \geq hn^{1/2} > (nd^2)^{1/2} \geq (nd\lambda)^{1/2}$. Now, 
\begin{align*}
\ipo{x}_N 	& = \ipo{x}_M \\
			& = \ipo{x'}_M + \ipo{y}_M + 2\ip{x'}{y}_M \\
			& = \lambda + \ipo{y}_M + 2\ip{x'}{y}_M.
\end{align*}
Since all entries of $y$ have modulus at most $(n-a)^{-1}$, it follows that $|\ipo{y}_M| \leq |E(G)|/(n-a)^2 = nhd/(2(n-a)^2) \leq n^{1/2}d/(2(n-a))$. 
Similarly, since $\sum_{v \in V(G')} |x_v| \leq a^{1/2}$, we have $2|\ip{x'}{y}_M| \leq 2a^{1/2}d/(n-a)$. 
It follows that 
\[
|\ipo{x}_N| \geq \lambda - \frac{d(n^{1/2}+4a^{1/2})}{2(n-a)} \geq \lambda - \frac{5dn^{1/2}}{2(n-a)} > \lambda - \frac{5}{2},
\]
and recalling that $\|x\|_2^2 \leq 1+1/\lambda < \lambda/(\lambda-1)$ completes the proof. 
\end{proof}

\begin{proof}[Proof of Lemma~\ref{patcount}]
We may specify a pattern by first choosing $w_0$, then choosing $a_{i,w}$ for each $(i,w)$ with $i \in [h]$ 
and $w \in \dog \cap [w_0,w_0d]$, and finally choosing $f_{i,w,i',w'}$ for each pair $(i,w),(i',w') \in V(\Gamma)$ 
with $i\simh i'$. 

If the pattern is to be non-empty then there is some $(i,w) \in V(\Gamma)$ for 
which $\alpha_{i,w} nh/w^2$ is a positive integer. Since $\alpha_{i,w} \leq 1$ it follows that $w_0^2 \leq w^2 \leq nh$ 
and so there are at most $|\dog \cap [0,\sqrt{nh}]| \leq \log_2(nh)$ choices for $w_0$. 

Having chosen $w_0$, since $|\dog \cap [w_0,w_0d]| \leq \log_2 (2d)$, there are at most 
$A^{h\log (2d)}$ choices for the $a_{i,w}$. 
Finally, for each pair $(i,w),(i',w')$ with $i \simh i'$, there are at most $1+\min(a_{i,w},a_{i',w'}) \leq A$ choices 
for $f_{i,w,i',w'}$. There are at most $hd \log_2(2d)/2$ such pairs, so the 
total number of choices for the $f_{i,w,i',w'}$ is at most $A^{hd \log_2(2d)/2}$. 

Combining the bounds of the two preceding paragraphs, we obtain that the total number of patterns 
with $a_{i,w} < A$ for all $(i,w) \in V(\Gamma)$ is at most 
\[
\log_2(nh) \cdot A^{h \log_2(2d)} A^{hd \log_2(2d)/2}< \log_2(nh) \cdot A^{2h d\log_2 d}. 
\]
\end{proof}


\section{Acknowledgements} {\Small S.G. heard of this problem in a seminar of Benny Sudakov at the IPAM long program on ``Combinatorics: Methods and Applications in Mathematics and Computer Science''.  He would like to thank IPAM for providing such an excellent program.  Most of the research for this article took place while he was a postdoctoral fellow at McGill University, he would like to thank them for their support and facilities.  He is currently supported by CNPq (Proc. 500016/2010-2).  L.A-B. was supported during this research by an NSERC Discovery Grant. Both authors would like to thank two anonymous referees for many valuable comments and suggestions.} 

\bibliographystyle{plainnat}

                 

\end{document}